\documentclass{article}

\title{Extensions of the Furstenberg--S\'ark\"ozy theorem via the arithmetic level-$d$ inequality}
\author{
Carlo Francisco E.~Adajar$^{1}$ \and
Rishika Agrawal$^{1}$ \and
Mukul Rai Choudhuri$^{1}$ \and
Chian Yeong Chuah$^{1}$ \and
Steve Fan$^{1}$ \and
Swaroop Hegde$^{1}$ \and
Andrew Lott$^{1,2}$ \and
Krishnamohan Nandakumar$^{1}$ \and
Nagendar Reddy Ponagandla$^{1,2}$
}
\date{}
\usepackage[margin=1in]{geometry} 
\usepackage{amsmath,amssymb,amsfonts,mathrsfs}
\usepackage{amsthm}
\usepackage{microtype}
\usepackage{indentfirst}
\usepackage{graphicx}
\usepackage{pdfsync}
\usepackage{comment}
\usepackage{enumitem}
\usepackage{esint}
\usepackage[colorlinks=true,
            linkcolor=blue,
            urlcolor=blue,
            citecolor=blue]{hyperref}

\usepackage{tikz}
\usetikzlibrary{arrows.meta,positioning,calc}

\newtheorem{thm}{Theorem}[section]

\newtheorem{lem}[thm]{Lemma}
\newtheorem{prop}[thm]{Proposition}
\theoremstyle{definition}

\newtheorem{thmB}{Theorem}
\newtheorem{thmA}{Theorem}

\newtheorem*{thmB'}{Theorem B$^\prime$}
\newtheorem*{thmC'}{Theorem C$^\prime$}

\theoremstyle{definition}
\newtheorem{rem}[thm]{Remark}
\theoremstyle{remark}

\newcommand{\md}[1]{\ensuremath{\,(\operatorname{mod}\, #1)}} 
\newcommand{\mdsub}[1]{\ensuremath{(\mbox{\scriptsize mod}\, #1)}} 

\numberwithin{equation}{section}

\newcommand{\R}{\mathbb{R}}
\newcommand{\N}{\mathbb{N}}
\newcommand{\Z}{\mathbb{Z}}
\newcommand{\1}{\mathbf{1}}

\begin{document}

\maketitle

\begin{center}
$^{1}$University of Georgia \\
$^{2}$ HUN-REN Alfr\'ed R\'enyi Institute of Mathematics (Erd\H{o}s Center)
\end{center}

\begin{abstract}
Green and Sawhney recently obtained a quasipolynomial bound in the Furstenberg--S\'ark\"ozy theorem for square differences by proving an ``arithmetic level-\(d\)'' inequality, thereby yielding a greatly improved density increment scheme. We apply their method to treat general intersective polynomials $h\in\mathbb{Z}[x]$. In particular, let 
\[
D(h(\N),X):= \max\bigl\{|A|:\ A\subseteq [1,X]\cap\N\ \text{and}\ (A-A)\cap h(\N)\subseteq\{0\}\bigr\}.
\]
We prove that for every $0<\mu<1/2$ there are constants $c_0, X_{\text{min}}>0$ depending on $h$ and $\mu$ such that for every $X>X_{\text{min}}$,
\[D(h(\N), X)\leq Xe^{-c_0(\log X)^\mu}.\]
This is the best quantitative upper bound presently known for sets lacking intersective polynomial differences, improving upon the work of Arala \cite{Ara23}. In order to achieve the admissible exponent range $0<\mu<1/2$, we use sieve methods to develop novel exponential sum estimates in the style of Rice, and we use the ``random sparsification'' procedure of Green and Sawhney.

\end{abstract}
\tableofcontents

\section{Introduction}

For a set $F\subseteq \mathbb Z$ and $X>1$, write
\[
D(F,X):= \max\bigl\{|A|:\ A\subseteq [1,X]\cap\N\ \text{and}\ (A-A)\cap F\subseteq\{0\}\bigr\}.
\]
Here, we refer to $F$ as the set of \emph{forbidden differences}. The Furstenberg--S\'ark\"ozy theorem asserts that $D(\{n^2:n\in\mathbb N\},X)=o(X)$, answering a question of Lov\'asz.
This was proved independently by Furstenberg~\cite{Fur77} using ergodic theory and by S\'ark\"ozy~\cite{sarkozy-i,Sar78} using the Hardy--Littlewood circle method and the density increment method of Roth; S\'ark\"ozy's proof gives the quantitative bound
\[
D(\{n^2:n\in\mathbb N\},X) \ll X(\log\log X)^{2/3}(\log X)^{-1/3}.
\]
Pintz--Steiger--Szemer\'edi~\cite{pintz-steiger-szemeredi} introduced a more elaborate double iteration strategy to obtain a bound of the form
\[
D(\{n^2:n\in\mathbb N\},X) \ll X(\log X)^{-c_1\log\log\log\log X}
\]
with $c_1=1/12$, and Bloom--Maynard~\cite{bloom-maynard} later strengthened this to
\[
D(\{n^2:n\in\mathbb N\},X) \ll X(\log X)^{-c_2\log\log\log X}
\]
where $c_2>0$ is an absolute constant.
Very recently, Green and Sawhney~\cite{GreenSawhney2024FSv1, GreenSawhney2025FSv2} developed a new ``arithmetic level-$d$'' inequality (building on hypercontractive ideas of Keller--Lifshitz--Marcus~\cite{KLM23} and Keevash--Lifshitz--Long--Minzer~\cite{kllm}) and pushed the density increment strategy to obtain the following quasipolynomial bound
\begin{equation}\label{V2}
D(\{n^2:n\in\mathbb N\},X) \ll X\exp\bigl(-c_3(\log X)^{1/2}\bigr),
\end{equation}
for some constant $c_3>0$.

We record the arithmetic level-$d$ inequality as in \cite{GreenSawhney2025FSv2}.

\begin{thmA}[Green--Sawhney]\label{level d}
Set $C_0 := 2^{13}$. Let $\alpha \in (0,1/2)$ and let $X \ge 1$ be parameters with $\alpha > 2X^{-1/2}$. Let $\mathcal{Q}$ be a set of pairwise coprime positive integers such that $\max_{q \in \mathcal{Q}} q \le X^{1/(32\log(1/\alpha))}$. Let $1 \le d\le 2^{-7}\log(1/\alpha)$ be an integer. Let $f : [X] \rightarrow \mathbb{C}$ be a function such that $|f(x)|\le 1$ for all $x$. Then either
\begin{equation}\label{main41}\sum_{\substack{S \subseteq \mathcal{Q} \\ |S| = d}}\sum_{\substack{a \md{\prod_{q \in S} q}  \\ q \in S \Rightarrow q \nmid a}}\Big|\widehat{f}\Big( \frac{a}{\prod_{q \in S} q} \Big)\Big|^2 \le \alpha^2 X^2 \Big(\frac{C_0\log(1/\alpha)}{d}\Big)^d,\end{equation}
or else for some set $S \subseteq \mathcal{Q}$ with $1 \le |S| \le 2 \log(1/\alpha)$, and for some $r \in \Z$,
the average of $|f(x)|$ on the progression $P = \{ x \in [X] : x \equiv r \md{\prod_{q \in S} q}\}$ is greater than $2^{|S|} \alpha$.
\end{thmA}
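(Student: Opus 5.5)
We would prove Theorem \ref{level d} by a hypercontractivity argument on a product space, following Keller--Lifshitz--Marcus. The starting point is the Chinese remainder theorem. Put $Q=\prod_{q\in\mathcal Q}q$, or better, work with a random shift: the map $x\mapsto (x \bmod q)_{q\in\mathcal Q}$ identifies residues mod $Q$ with the product $\prod_{q}\Z/q\Z$. For each $x\in[X]$ we form the function $F_x$ on this product. The Fourier coefficient $\widehat f(a/\prod_{q\in S}q)$ with $q\nmid a$ for all $q\in S$ is exactly a character on $\prod_{q\in S}\Z/q\Z$ that is nontrivial in every coordinate of $S$. So the left side of \eqref{main41} is the level-$d$ Fourier weight of the function
\[
g(y)=\sum_{x\in[X]} f(x)\,\1_{x\equiv y},
\]
with $y$ ranging over residues mod $\prod q$. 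Because $\prod_{q\in S}q\le X^{|S|/(32\log(1/\alpha))}$ is much smaller than $X$ when $|S|\ll\log(1/\alpha)$, the map $x\mapsto (x\bmod q)_{q\in S}$ is nearly equidistributed on $[X]$. We would therefore replace the genuinely infinite product by an honest finite product space $\Omega=\prod_{q\in\mathcal Q}\Z/q\Z$, restricted to the relevant $S$, with errors controlled by that size bound.

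In the product space, the level-$d$ inequality needs \emph{global hypercontractivity}. Write $g=\sum_S g^{=S}$ for the Efron--Stein decomposition, where $g^{=S}$ has frequencies supported exactly on $S$. We then aim to show
\[
\sum_{|S|=d}\|g^{=S}\|_2^2\le \|g\|_1^{2}\,\Big(\frac{C\log(\|g\|_\infty/\|g\|_1)}{d}\Big)^d
\]
for $g$ that is "global", meaning every restriction of $g$ to a subcube, i.e.\ fixing the coordinates in $T$ (equivalently restricting $x$ to a progression mod $\prod_{q\in T}q$), has $L^1$ norm at most $2^{|T|}\|g\|_1$. The classical route has three steps. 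First, apply the Keevash--Lifshitz--Long--Minzer hypercontractive inequality $\|\mathrm T_\rho h\|_4\le\|h\|_2$ for global $h$, with noise $\rho$ an absolute constant. This holds on arbitrary product spaces, with no dependence on the sizes $q$, which is crucial here because the $q$ are unbounded. Second, run duality: the level-$d$ weight equals $\langle g, g^{=d}\rangle\le\|g\|_{p'}\|g^{=d}\|_p$ with $p$ of size about $\log(1/\alpha)/d$. Third, use hypercontractivity with $\rho^{-d}$ loss to bound $\|g^{=d}\|_p\lesssim (Cp)^{d/2}\|g^{=d}\|_2$, and bound $\|g\|_{p'}$ using $|f|\le1$ and density $\alpha$. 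Optimizing $p$ gives the factor $(C_0\log(1/\alpha)/d)^d$ times $\alpha^2X^2$.

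If the globalness hypothesis fails for some $T$ with $1\le|T|\le 2\log(1/\alpha)$, the restriction of $|f|$ to a progression mod $\prod_{q\in T}q$ has average exceeding $2^{|T|}\alpha$. That is precisely the second alternative. So the proof is a dichotomy: either $g$ is global up to level $2\log(1/\alpha)$, which gives \eqref{main41}, or we get the density increment. The restriction $|T|\le2\log(1/\alpha)$ suffices because $d\le 2^{-7}\log(1/\alpha)$, and the hypercontractive estimates for the derivatives $D_T g$ only invoke sets of size up to a small multiple of $p d$.

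The main obstacle is the transfer between $[X]$ and the product space, together with constant bookkeeping. The function $f$ lives on an interval, not on a group mod $\prod q$, so each restriction, derivative and noise operator must be approximated with equidistribution errors of size $\prod_{q\in T}q/X$. These errors must stay below $\alpha^2$ even after the $p$-th powers. This is what forces $\max q\le X^{1/(32\log(1/\alpha))}$ and $\alpha>2X^{-1/2}$. To handle it cleanly, we would first try the Green--Sawhney device of averaging $f$ over all $y$, i.e.\ a lift to $\Z/N\Z$ with $N$ a multiple of all relevant moduli. Only failing that would we carry explicit error terms through.
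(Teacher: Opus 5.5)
The paper gives no proof of Theorem~\ref{level d}. It quotes the result from Green--Sawhney \cite{GreenSawhney2025FSv2} and uses it only as a black box in Lemma~\ref{lem:leveld-endpoints}, so your outline has to stand on its own, and as written it has a genuine gap.

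Your CRT identification of the left side of \eqref{main41} with a level-$d$ Fourier weight is correct. The core analytic step, however, fails. In Step~3 you assert $\|g^{=d}\|_p\lesssim (Cp)^{d/2}\|g^{=d}\|_2$ for $g$ that is global in the $L^1$ sense supplied by the second alternative. This is false once the moduli are large.

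Take $d=1$ and $f(x)=\alpha(1+\1_{q_1\mid x})$ for a single large $q_1\in\mathcal Q$. Every progression average of $|f|$ is at most $2\alpha\le 2^{|S|}\alpha$, so the second alternative fails. Yet the level-one polynomial $F(x)=\sum_{q\in\mathcal Q}\sum_{a\bmod q,\ q\nmid a}\widehat f(a/q)e(ax/q)$ equals $\alpha X(\1_{q_1\mid x}-1/q_1)$ up to negligible terms. Its normalized $L^p/L^2$ ratio is $\asymp q_1^{1/2-1/p}$, which is unbounded in $q_1$.

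Two related problems follow. First, $g^{=1}$ is not global in the $L^2$ sense that KLLM-type inequalities require: its restriction to $q_1\mid x$ has size $\asymp\alpha$, against a normalized $L^2$ norm $\asymp\alpha q_1^{-1/2}$. Second, an $L^4$ estimate at a fixed noise rate gives nothing for $p\asymp\log(1/\alpha)/d$ in any case.

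What is needed is a moment bound in which globalness enters directly.
\begin{itemize}
\item For $d=1$, transfer moments to $\prod_q\Z/q\Z$, where the pieces $F_q$ become independent. Bernstein's inequality then gives $\|F\|_p\lesssim\sqrt p\,\|F\|_2+p\max_q\|F_q\|_\infty$, and the bound $\|F_q\|_\infty\ll\alpha X$ is exactly what globalness provides. Combined with H\"older on $[X]$ (described below) and $p=\log(1/\alpha)$, this yields \eqref{main41}.
\item For $d\ge2$, the corresponding extra terms are essentially level-$(d-|T|)$ quantities for the restrictions of $f$ to classes modulo $\prod_{q\in T}q$. These restrictions are again global, with parameter $2^{|T|}\alpha$; this heredity is what the $2^{|S|}\alpha$ form of the hypothesis is designed for. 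So these terms are naturally controlled by induction on $d$.
\end{itemize}
Neither this mixed inequality nor the induction appears in your proposal.

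The transfer step is also left open, and the concrete device you suggest would not close it. The product $\prod_{q\in\mathcal Q}q$ can be vastly larger than $X$; take $\mathcal Q$ to be all primes up to $X^{1/(32\log(1/\alpha))}$. Work in $\Z/N\Z$ with $N$ divisible by every $q\in\mathcal Q$ (for instance $N=\prod_{q\in\mathcal Q}q$, which is your $\Omega$). There $g$ is supported on $X$ points and has density about $\alpha X/N$ rather than $\alpha$. The duality step performed in that space therefore produces $\log(N/\alpha X)$ where $\log(1/\alpha)$ is required. Nor can you restrict to a bounded set of coordinates, because \eqref{main41} sums over all $d$-subsets of $\mathcal Q$.

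The workable arrangement keeps the duality on $[X]$. Write $q_S=\prod_{q\in S}q$ and $F(x)=\sum_{|S|=d}\sum_a\widehat f(a/q_S)e(ax/q_S)$, with $a$ ranging as in \eqref{main41}. Then $\sum_{x\in[X]}f(x)\overline{F(x)}$ equals the left side of \eqref{main41} exactly. H\"older on $[X]$ therefore costs only $\|f\|_{\ell^{p'}[X]}\le\|f\|_{\ell^1[X]}^{1/p'}\le(2\alpha X)^{1/p'}$; the $\ell^1$ bound follows from the failure of the second alternative with $|S|=1$. Only the even moments $\sum_{x\in[X]}|F(x)|^p$ with $pd\le 2\log(1/\alpha)$ then need to be compared with their product-space counterparts. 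These involve only frequencies whose denominators are at most $X^{pd/(32\log(1/\alpha))}\le X^{1/16}$, which is precisely what the hypothesis on $\max_{q\in\mathcal Q}q$ provides.
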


A natural generalization of the Furstenberg--S\'ark\"ozy theorem is to replace squares by the image of a polynomial.
If $h\in\mathbb Z[x]$ is nonconstant, then a necessary condition for a bound of the shape $D(h(\N),X)=o(X)$ is that $h(\mathbb Z)$ contain a nonzero multiple of every $q\in\mathbb N$, in which case we say that $h$ is \emph{intersective}\footnote{This definition makes sense for integer-valued polynomials in $\mathbb{Q}[x]$ as well. Given a nonconstant integer-valued polynomial $h\in\mathbb{Q}[x]$, there exists $m\in\N$ such that $f:=mh\in\Z[x]$. It is clear that if $h$ is intersective, then so is $f$. Moreover, we have $D(\{h(n)\colon n\in\N\},X)\le D(\{f(n)\colon n\in\N\},mX)$.}.
Kamae and Mend\`es France~\cite{KM78} showed that intersectivity is also sufficient for the qualitative conclusion $D(h(\N),X)=o(X)$.
On the quantitative side, Lucier~\cite{Lucier06} proved the first general density-decay bound for intersective polynomials: if $h$ has degree $\deg h=k\ge 2$, then
\[
D(h(\N),X) \ll_h X\frac{(\log\log X)^{\rho/(k-1)}}{(\log X)^{1/(k-1)}},
\]
where 
\[
\rho =
\begin{cases}
3, & k=2,\\
2, & k\ge 3.
\end{cases}
\]
Stronger decay of Pintz--Steiger--Szemer\'edi type was previously available in special cases (e.g.\ for monomials~\cite{BPPS94} and for intersective quadratics~\cite{HLR13}), and Rice~\cite{Ric19} fully extended this to all intersective polynomials, obtaining bounds of the form
\[
D(h(\N),X) \ll_{h,c_5} X(\log X)^{-c_5\log\log\log\log X},
\]
for some constant $c_5>0$ depending on $k$. In order to prove this bound, Rice developed a method of achieving square root cancellation for local Gauss sums, which was necessary to perform the double iteration strategy of Pintz--Steiger--Szemer\'edi. More recently, Arala~\cite{Ara23} used the exponential sum estimates of Rice to generalize the Bloom--Maynard proof to the full class of intersective polynomials, yielding the bound 
\[
D(h(\N),X) \ll_{h} X(\log X)^{-c_6\log\log\log X},
\]
for some constant $c_6>0$ depending on $k$.

The objective of this paper is to prove the following analogue of \eqref{V2} for general intersective polynomials. 
\begin{thm}\label{main}
Let $0<\mu<1/2$. Given any intersective polynomial $h\in \Z[x]$ of degree at least 2, there are constants $c_0, X_{\text{min}}>0$ depending on $h$ and $\mu$ such that 
\begin{equation}\label{mainineq}
D(h(\N), X)\leq Xe^{-c_0(\log X)^{\mu}}
\end{equation}
whenever $X>X_{\text{min}}$. 
\end{thm}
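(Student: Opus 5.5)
We prove Theorem~\ref{main} by a density increment argument in the style of Green--Sawhney, adapted to intersective polynomials as Lucier and Rice do. Because the forbidden set is not dilation-invariant, the iteration has to track auxiliary polynomials. When $A$ has increased density on a progression $r+q\Z$, we pass to $A'=\{x:r+qx\in A\}$. The set $A'$ then lacks differences in $h_q(\N)$, where $h_q(x)=h(r_q+qx)/\lambda(q)$. Here $r_q$ is a root of $h$ modulo the appropriate $q$-adic power (it exists by intersectivity), and $\lambda$ is the multiplicative normalisation from Lucier/Rice. With this normalisation every $h_q$ is again intersective, has the same degree $k$, and has leading coefficient and content bounded by $q^{O_k(1)}$. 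The statement to iterate is therefore: if $A\subseteq[X]$ has density $\alpha$ and $(A-A)\cap h_q(\N)\subseteq\{0\}$, then $A$ has density at least $\alpha(1+c)$, or even $2^{|S|}\alpha$, on a progression of modulus $Q\le \exp(O(\log(1/\alpha))^{O(1)})$, with $X$ shrinking by the same factor.

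The Fourier step proceeds as follows. Set $f=1_A-\alpha 1_{[X]}$ and let $\nu$ be a suitably normalised measure on $\{h_q(n)\}\cap[X]$, sparsified at random following Green--Sawhney. We have $\langle 1_A*1_{-A},\nu\rangle=0$, while the main term is $\approx\alpha^2X$. Hence $\sum_\theta |\widehat f(\theta)|^2|\widehat\nu(\theta)|\gg\alpha^2X$. On the major arcs $\theta\approx a/m$ we bound $\widehat\nu$ by the local Gauss sums $S(a/m)$ of $h_q$, using Rice's square-root cancellation $|S(a/m)|\ll m^{-1/k+\epsilon}$ for $m$ without small prime-power irregularities. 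On the minor arcs we use Weyl sums. The delicate part is controlling the $m$ with many prime factors; the sieve-theoretic exponential sum estimates mentioned in the abstract are meant for exactly this. Decomposing $m=\prod_{q\in S}q$ over products of pairwise coprime prime powers from a set $\mathcal Q$, the mass is spread over levels $d=|S|$. The Gauss sum gives a saving of roughly $m^{-1/k}\le (\min\mathcal Q)^{-d/k}$ at level $d$. Theorem~\ref{level d} then gives one of two outcomes. Either each level contributes at most $\alpha^2X^2(C_0\log(1/\alpha)/d)^d$, and choosing $\min\mathcal Q\ge (\log 1/\alpha)^{C}$ makes the sum over $d$ small, a contradiction. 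Or we obtain the strong increment $2^{|S|}\alpha$ on a progression with $|S|\le2\log(1/\alpha)$. The moduli that are not squarefree-like, with large prime powers or small primes, go into a bounded initial modulus handled by a standard Roth-type $L^\infty$ increment of size $(1+c)\alpha$.

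For the iteration, the strong increments double $\alpha$ at the cost of shrinking the scale to $X\mapsto X/Q^{O_k(1)}$ with $\log Q\ll \log(1/\alpha)\cdot\log\log(1/\alpha)$. After $O(\log(1/\alpha))$ rounds the total cost is $\log X\gg (\log 1/\alpha)^{2+o(1)}$ if everything is ideal. The requirement $\max q\le X^{1/(32\log(1/\alpha))}$ in Theorem~\ref{level d}, together with the growth of the coefficients of $h_q$ (affecting the minor-arc Weyl bounds and the sparsification), loses an arbitrarily small power. This loss is why we obtain $\mu<1/2$ rather than $1/2$. Solving $\log X\ge C(\log 1/\alpha)^{1/\mu}$ gives $\alpha\le e^{-c_0(\log X)^\mu}$, which is \eqref{mainineq}.

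The main obstacle is the major-arc Gauss sum input. Theorem~\ref{level d} needs uniform savings on $\widehat\nu(a/m)$ for composite $m$ built from many coprime factors. For a general intersective $h_q$ we need $\prod_{p^j\|m}|S_{p^j}|\ll m^{-1/k+\epsilon}$, with constants uniform in the changing $q$ and with the bad primes (those dividing the discriminant or the leading coefficient) controlled. Our first attempt would be to take Rice's local estimates prime by prime. We would then use a sieve to discard from $\mathcal Q$ the primes where $h_q$ degenerates, checking that the remaining set is still large enough for the level-$d$ counting.
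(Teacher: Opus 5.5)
Your proposal has a genuine gap in two places: the exponential-sum input and the iteration accounting.

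\textbf{The exponential-sum input.} The bound you attribute to Rice, $|S(a/m)|\ll m^{-1/k+\epsilon}$, is not square-root cancellation. It is only the classical Hua-type saving, and for unsifted complete sums it is sharp at every prime. For example, take $h(x)=x^k$, modulus $p^k$ with $p\nmid ak$. The classes $s\equiv 0\pmod p$ alone contribute $p^{k-1}=(p^k)^{1-1/k}$, and the units contribute $0$.

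With a saving of only $m^{-1/k}$, your level-$d$ sum forces $\min\mathcal Q\ge L^{k+}$, and the reachable exponent degrades with $k$. What the argument needs is the weight $q^{-1/(2+\epsilon)}$ for every major-arc denominator $q\le C_1\alpha^{-2-\epsilon}$, as in Lemma~\ref{InitialMassDichotomy}.

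Rice obtains this weight by restricting the summation variable, not by discarding degenerate primes from $\mathcal Q$ as you propose. The measure is placed on $h_\ell(n)$ with $n\in W_\ell(U)$, that is, with $h_\ell'(n)\not\equiv 0\pmod{p^{\gamma_\ell(p)}}$ for all $p\le U$. This removes exactly the critical-point contributions above.

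Once the measure is sifted, you also need a major-arc asymptotic for the sifted, smoothly weighted sum, with error $O(e^{-c\log(Y/q)/\log U})=o(\alpha)$. The paper gets this from two ingredients. The first is the fundamental lemma applied on short intervals where the oscillating weight has bounded variation (Proposition~\ref{ricebrunsum2}). The second is enlarging $q$ to its $(\ell,U)$-complete multiple and re-proving the complete-sum bound for that modulus (Lemma~\ref{bounded-completion-gauss}). Your proposal has nothing playing this role.

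\textbf{The iteration accounting.} Suppose you had square-root savings. Your scheme still sends every prime below $\min\mathcal Q\ge L^{C}$ into an ``initial modulus'' handled by $(1+c)\alpha$ increments, and you need $C\sigma>1$, so $C>2$. That modulus is $\exp(\Theta(L^{C}))$, not bounded. So the $O(L)$ such steps cost $\log X\gg L^{1+C}$, which gives at best $\mu<1/3$. Your figure $\log Q\ll L\log\log(1/\alpha)$ omits this modulus entirely.

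Green--Sawhney's random sparsification is not a sparsification of the measure $\nu$. It chooses random subsets $\mathcal P_i$ of the primes at each scale $L^{\beta_i}$ above $B=WL^{1+\epsilon}$, so that the common modulus $R_\kappa$ satisfies $\log R_\kappa\ll WL^{1+\epsilon}$. This yields a second $L^2$ bound, which is interpolated with the level-$d$ bound against the weight $q^{-\sigma}$ (Lemma~\ref{lem:leveld-endpoints}). The interpolation needs $\sigma>\rho>1/(2+\epsilon)$, so it again relies on the square-root saving.

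Only this interpolation lets the small-prime threshold drop to $L^{1+\epsilon}$. Then each unit of density growth costs $L^{1+\epsilon}$ in $\log X$, and the total cost is $L^{2+\epsilon}$. As written, your claimed total $(\log 1/\alpha)^{2+o(1)}$ does not follow.
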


In contrast to the square case, the natural density increment iteration for intersective polynomials necessarily introduces a family of auxiliary polynomials whose coefficients evolve from step to step. As a result, one must establish circle method estimates uniformly across all auxiliary polynomials that arise, while simultaneously controlling the growth of the auxiliary polynomial coefficients in order to ensure that these estimates remain valid at every stage of the iteration. The difficulty is that the  density increment scheme and smoothly weighted exponential sum estimates of Green and Sawhney must be carefully adapted to this evolving family of auxiliary polynomials. In addition, the required auxiliary polynomial exponential sum estimates rely on a sieve method introduced by Rice \cite{Ric19}. In this method, the summation variable is restricted to a sifted set obtained by excluding integers $n$ for which $h_\ell'(n)$ is divisible by certain powers of small primes. This restriction makes it possible to obtain near-square-root cancellation in the associated complete sums.

In order to achieve the desired exponent range $0<\mu<1/2$, we had to introduce new ideas beyond the existing arguments of Rice and Green--Sawhney. The main new ingredient is an application of the fundamental lemma of sieve theory that gives a sieve estimate uniform on short intervals. We then partition the exponential sum into intervals on which the smoothly weighted oscillatory factor has bounded total variation. Since the error in the sieve estimate on each interval is proportional to its length, partial summation yields the required global error after the estimates for the individual intervals are summed. The use of short intervals and partial summation in this way goes back to the classical work of Vinogradov on exponential sums over the primes; see \cite[Theorems 2a--2b]{Vinogradov54}.

To use this short-interval estimate in the major arc analysis, it must also hold after imposing a congruence condition $n\equiv b\pmod q$, where $q$ is the denominator of the relevant rational approximation. The required sieve asymptotic can fail for an arbitrary $q$. We therefore introduce complete moduli and show that $q$ can be enlarged to a complete modulus by a factor bounded in terms of $h$. Because this enlargement changes the complete sum appearing in the major arc estimate, we also prove a corresponding complete-sum estimate of the same strength as Rice's estimate. 

We are able to use a simplified version of the ``random sparsification" procedure of Green and Sawhney.  In particular, we aim for the exponent range $0<\mu<1/2$ as opposed to $\mu=1/2$, and for this reason there is slack in the random sparsification argument which allows us to cut many of the steps. Achieving $\mu=1/2$ in Theorem \ref{main} seems hopeless with current technology, as Rice's exponential sum estimates yield \emph{near} square root cancellation as opposed to square root cancellation for the complete sum.  

The project has gone through several drafts, each improving the admissible range for the exponent in Theorem \ref{main}. Here is a summary of the previous admissible ranges and what we did to achieve them.

\begin{itemize}
\item We achieved the range $0<\mu<1/(k(k-1)+2)$ with $k=\deg h$ by using the proof framework of the first version of Green--Sawhney along with classical polynomial exponential sum estimates. 

\item We achieved the range $0<\mu<1/4$ by developing smoothly weighted versions of Rice's exponential sum estimates \cite{Ric19}, giving the bound stated in version $1$ of this paper on arXiv \cite{v1}. 

\item We achieved the range $0<\mu<1/3$ by adapting the random sparsification procedure found in version $2$ of Green--Sawhney. At this stage we were unable to achieve the desired range $0<\mu<1/2$ because the error term in the available Brun sieve calculation was too large at the scale required by the iteration.

\item In the present paper, we achieve the range $0<\mu<1/2$ by proving an improved major arc estimate via the short-interval strategy described above.
\end{itemize}

There remains a substantial gap between our upper bound in Theorem \ref{main} and the best available lower bounds for $D(h(\N),X)$ when $h\in\Z[x]$ is intersective (see \cite{DoyleRice2021}). Indeed, all known lower bounds in this setting are of the form $X^{c}$ with some constant $0<c<1$. The greedy construction yields $c=1-\frac{1}{\deg h}$, and improvements beyond this are currently only known in special cases, such as monomials and certain polynomials $h\in \Z[x]$ which are divisible by $x^2$ (see \cite{RuzsaSquares1984,Lewko,Younis2019,Wessel2020}).

\section{Notation}
\begin{itemize}
\item We write $A = O(B)\ \text{or}\ A \ll B$
if there exists an absolute constant $C>0$ such that $|A(X)|\le C\,B(X)$ for all sufficiently large $X$. We write $A\gg B$ if $B\ll A$, and we write $A\asymp B$ if both $A\ll B$ and $A\gg B$ are true. When the implied constant is allowed to depend on certain parameters, say $r$ for instance, we write $A\ll_r B$ (equivalently $A=O_r(B)$), $A\gg_r B$ if $B\ll_r A$, and $A\asymp_r B$ if both $A\ll_r B$ and $A\gg_r B$ occur. 

\item We write $A = o(B)$ if $\displaystyle\lim_{X\to\infty}\frac{A(X)}{B(X)}= 0$.

\item For $X>0$, we use $[X]$ to denote the set $\{n\in \N: n\leq X\}$. 

\item We write $\1_{S}$ for the characteristic function of the set $S$.

\item We write $\mathbb{T}:=\R/\Z$ equipped with normalized Haar
measure $d\alpha$ and we define \[
\lVert x \rVert_{\mathbb{T}}:= \min_{n\in \Z}|x-n|.
\]

\item As is standard in number theory, we let $e(t)=e^{2\pi it}$.

\item For $f\in \ell^1(\Z)$, its \emph{Fourier transform} is the function $\widehat f:\mathbb{T}\to\mathbb{C}$
defined by
\[
\widehat f(\alpha) := \sum_{n\in\Z} f(n)\,e(-n\alpha).
\]
When a function is initially defined on a finite interval such as $[X]$, we regard it as extended by zero to $\Z$ when taking its Fourier transform.
\item For $v\in L^1(\mathbb{R}),$ we use the Fourier transform convention
\[\widehat{v}(t):=\int_{\mathbb{R}}v(u)e(-tu)\ du.\]
\item For a nonempty finite set $S$ and a function $g:S\to \mathbb{C}$, we use the averaging notation 
\[\displaystyle \mathbb{E}_{x\in S}g(x):=\frac{1}{|S|}\sum_{x\in S}g(x).\]

\item We use $\Re x$ to denote the real part of $x\in 
\mathbb{C}$.
\item Throughout the paper, we shall fix an intersective polynomial $h\in \Z[x]$ of degree $k$.

\end{itemize}

\section{An overview of the proof}
The rest of the paper will be devoted to the proof of Theorem \ref{main}. For the benefit of the reader, we outline the proof below. 

\textbf{Section} \ref{intersectivepolynomials}. We introduce the basic theory of intersective polynomials and auxiliary polynomials. The most important part of this section is the auxiliary polynomial inheritance proposition (Proposition \ref{inheritance}), which is the key fact that allows us to perform density increment arguments while passing between auxiliary polynomials. These classical ideas go back to the work of Lucier \cite{Lucier06}.

\textbf{Section} \ref{densincsec}. We state the density increment scheme without proof, and we show how it implies Theorem \ref{main}. Certain aspects of this section match what Green and Sawhney do, but our argument is longer and more complicated because we must track the growth of the auxiliary-polynomial parameter $\ell$, and hence of the coefficients, throughout the iteration.

\textbf{Section} \ref{rice-sieve}. We introduce the ideas of Rice. In particular, for an exponential sum with an auxiliary polynomial phase, it is more effective to take the sum over a special sifted subset of the interval rather than over the entire interval. This set is obtained by excluding integers $n$ for which $h_\ell'(n)$ vanishes modulo specified powers of small primes, and the resulting complete sums exhibit near-square-root cancellation in the range relevant to our argument. This is significantly better than the cancellation available from a standard polynomial complete-sum estimate. The strength of the complete-sum estimate is one of the main factors determining the admissible exponent range in the main theorem.

\textbf{Section} \ref{completemoduli}. We introduce the notion of a complete modulus, which is needed for the sieve calculations involving this sifted set. More precisely, the required asymptotic for the elements of the sifted set lying both in a fixed residue class modulo $q$ and in a short interval need not hold for an arbitrary modulus; it holds when $q$ satisfies the completeness condition. We show that every relevant denominator can be enlarged to a complete modulus by a factor bounded in terms of $h$. We also prove the corresponding complete-sum estimate needed after replacing a denominator by its completion. Appendix \ref{sievecalc} proves the short-interval sieve estimate by applying the fundamental lemma of sieve theory in the form stated by Tao \cite{TaoSieveNotes}.

\textbf{Section} \ref{weylestimate}. We first introduce the smooth weight with rapid Fourier decay used by Green and Sawhney. This weight allows us to define unusually narrow major arcs, which is critical for obtaining an effective density increment scheme from the arithmetic level-$d$ inequality. We then apply Rice's Weyl estimate to prove that if the smoothly weighted exponential sum with auxiliary polynomial phase over the sifted set is large at a frequency $\theta$, then $\theta$ has a rational approximation with a small denominator.

\textbf{Section} \ref{majorarcestimate}. We apply the sieve calculation to obtain a major arc estimate for these exponential sums. To obtain a sufficiently small error term, we partition the range of summation into disjoint short intervals on which the smoothly weighted oscillatory factor has bounded total variation. Since the sieve error on each interval is proportional to its length, partial summation on these intervals yields a favorable error after summing over the partition. This strategy goes back to Vinogradov's classical work on exponential sums over the primes; see \cite[Theorems 2a--2b]{Vinogradov54}. This is also where complete moduli enter the argument. If $a/q$ is a rational approximation to the frequency, the denominator $q$ need not be complete. We therefore replace $a/q$ by an equal fraction with complete denominator and use the specialized complete-sum estimate from Section \ref{completemoduli}. The resulting major arc estimate combines the near-square-root cancellation available in the relevant range with the rapid Fourier decay of the smooth weight.

\textbf{Section} \ref{inifouma}. We begin by defining the major and minor arcs, and then prove the minor arc estimate by combining the Weyl and major arc estimates. We argue contrapositively. If the exponential sum is large at a frequency $\theta$, the Weyl estimate supplies a rational approximation $a/q$ which is sufficiently good for us to apply the major arc estimate. The major arc estimate contains both a complete-sum factor depending on $q$ and the Fourier transform of the smooth weight, which decays rapidly as $\theta$ moves away from $a/q$. The complete-sum estimate forces $q$ to be sufficiently small, while the rapid Fourier decay forces $\theta$ to be sufficiently close to $a/q$. Thus $\theta$ belongs to one of the major arcs. This rapid decay allows the major arcs to be sufficiently narrow for the arithmetic level-$d$ inequality to yield quantitatively effective results.

We also establish the connection between physical-space information and Fourier-space information. Unless the physical-space argument has already supplied a density increment, the absence of auxiliary-polynomial differences forces the Fourier transform of $\1_A$ to have substantial $L^2$ mass near rationals with small denominators. The exponential sum estimates described above yield the strong $q^{-1/(2+\epsilon)}$-weighted Fourier-mass estimate needed in the final argument.

\textbf{Section} \ref{classical-density-increment} and \textbf{Section} \ref{applylevd}. We complete the proof of the density increment scheme. The first section uses a large Fourier coefficient and the classical density increment argument when the density is bounded below. In the second, we apply the arithmetic level-$d$ inequality and use the simplified random sparsification strategy to convert the remaining Fourier mass for $\1_A$ into a strong density increment on a sufficiently long progression.

Here is a graph illustrating the main logical dependencies in the proof.

\begin{center}
\begin{tikzpicture}[
  x=1cm,y=1cm,
  every node/.style={font=\scriptsize, align=center},
  box/.style={draw, rounded corners=2pt, thick, inner sep=3.2pt, text width=4.25cm, minimum height=0.94cm, fill=gray!8},
  input/.style={box},
  mid/.style={box},
  prop/.style={box, very thick, text width=4.95cm, minimum height=1.0cm},
  final/.style={box, very thick, text width=4.9cm, minimum height=1.08cm},
  arr/.style={-{Stealth[length=2.0mm,width=1.45mm]}, thick, draw=black, rounded corners=3pt}
]

\node[mid] (rice) at (0,19.1) {Rice exponential-sum estimates \\ Theorem~\ref{rice-gauss-minor}};
\node[mid] (complete) at (5.5,19.1) {Sieve/Complete-modulus machinery\\ Proposition.~\ref{ricebrunsum2}, Lemmas~\ref{fundamental-sieve-lemma}, \ref{bounded-completion-gauss}};
\node[mid] (leveld) at (11.0,19.1) {Arithmetic level-$d$ inequality \\ Theorem~\ref{level d}};

\node[mid] (ratapprox) at (0,16.35) {Rational approximation\\ from large $\widehat g$\\ Lemmas~\ref{Weyl}--\ref{Mukul}};
\node[mid] (major) at (5.5,16.35) {Major arc estimate\\ Lemma~\ref{Rishika}};

\node[mid] (minor) at (0,13.60) {Minor arc estimate\\ Lemma~\ref{lem:minor_arc_estimate}};
\node[mid] (physical) at (5.5,13.60) {Physical-space estimate\\ Lemma~\ref{physest}};

\node[mid] (initial) at (5.5,10.70) {Classical Fourier mass\\ Lemma~\ref{InitialMassDichotomy}};

\node[mid] (star) at (2.15,8.00) {Classical $(\ast)$ increment\\ Section~\ref{classical-density-increment}};
\node[mid] (levelbranch) at (8.85,8.00) {Random sparsification\\ Section~\ref{applylevd}};

\node[mid] (density) at (5.5,5.35) {Density-increment scheme\\ Proposition~\ref{density-increment-calc}};
\node[mid] (inherit) at (11.0,5.35) {Auxiliary polynomial inheritance\\ Proposition~\ref{inheritance}};

\node[mid] (iteration) at (5.5,2.77) {Iteration\\ Lemma~\ref{lem:GS-propagate} and induction};
\node[mid] (main) at (5.5,0.55) {$D(h(\N),X)\le X e^{-c_0(\log X)^\mu}$ \\ Theorem~\ref{main}};

\draw[arr] (rice) -- (ratapprox);
\draw[arr] (rice.east) -- (complete.west);
\draw[arr] (complete) -- (major);
\draw[arr] (leveld) -- (levelbranch);

\draw[arr] (complete.south) -- ++(0,-0.55) -- ++(2.85,0) |- (physical.east);

\draw[arr] (ratapprox) -- (minor);
\draw[arr] (major.south west) -- (minor.east);
\draw[arr] (major.east) -- ++(1.05,0) |- (initial.east);
\draw[arr] (minor.south) |- (initial.west);
\draw[arr] (physical) -- (initial);

\draw[arr] (initial.south west) -- ++(0,-0.55) -| (star.north);
\draw[arr] (initial.south east) -- ++(0,-0.55) -| (levelbranch.north);

\draw[arr] (star.south) |- (density.west);
\coordinate (densityin) at ([xshift=1.1cm]density.north);
\draw[arr] (levelbranch.south) -- ++(0,-0.55) -| (densityin);

\draw[arr] (density) -- (iteration);
\draw[arr] (inherit.south) |- (iteration.east);
\draw[arr] (iteration) -- (main);

\end{tikzpicture}
\end{center}
\newpage

\section{Intersective polynomials}\label{intersectivepolynomials}

Let $h$ be an intersective polynomial of degree at least $2$. We now
define the auxiliary polynomials as in \cite{RiceThesis}. For each prime
$p$, fix a $p$-adic integer $z_p$ such that $h(z_p)=0$. For each
$\ell\in\N$, let $r_\ell$ be the unique integer in $(-\ell,0]$
satisfying
\[
r_\ell\equiv z_p\pmod{p^{v_p(\ell)}}
\qquad\text{for every prime }p\mid\ell.
\]
The Chinese Remainder Theorem gives the existence and uniqueness of
$r_\ell$, and the defining congruences imply that $\ell\mid h(r_\ell)$.

Write $m_p$ for the multiplicity of $z_p$ as a $p$-adic root of $h$.
Define $\lambda(p):=p^{m_p}$ for every prime $p$, and extend $\lambda$
to $\N$ by complete multiplicativity.

For each $\ell\in\N$, we define the \emph{auxiliary polynomial} $h_\ell$ by
\begin{equation}\label{auxipoldef}
h_\ell(n)=\frac{h(r_\ell+\ell n)}{\lambda(\ell)}.
\end{equation}
The discussion following \cite[Definition~6.2]{RiceThesis} shows that $\lambda(\ell)$ divides every coefficient of $h(r_\ell+\ell x)$, and hence $h_\ell\in\Z[x]$.

The following proposition, which is  \cite[Proposition 6.3]{RiceThesis}, gives an important relationship and is the key property driving the iteration argument. 
\begin{prop}[Rice]\label{inheritance}
Let $\ell,q\in \N$ and $x\in\Z$. If $A\subseteq \N$, $(A-A)\cap h_\ell(\N)\subseteq\{0\}$, and
\[
A'\subseteq \{y\in \N:x+\lambda(q)y\in A\},
\]
then
\[
(A'-A')\cap h_{q\ell}(\N)\subseteq\{0\}.
\]
\end{prop}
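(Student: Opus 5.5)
The plan is to argue directly by contradiction, reducing a forbidden difference in $A'$ to a forbidden difference in $A$. Suppose $y_1,y_2\in A'$ with $y_1-y_2=h_{q\ell}(n)\neq 0$ for some $n\in\N$. By hypothesis $a_i:=x+\lambda(q)y_i\in A$, and
\[
a_1-a_2=\lambda(q)\,h_{q\ell}(n)=\frac{\lambda(q)}{\lambda(q\ell)}\,h(r_{q\ell}+q\ell n)=\frac{h(r_{q\ell}+q\ell n)}{\lambda(\ell)},
\]
where the last step uses the complete multiplicativity of $\lambda$, namely $\lambda(q\ell)=\lambda(q)\lambda(\ell)$. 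This difference is nonzero because $\lambda(q)\ge 1$. So it suffices to exhibit $m\in\N$ with $r_{q\ell}+q\ell n=r_\ell+\ell m$. Then $a_1-a_2=h_\ell(m)$ is a nonzero element of $(A-A)\cap h_\ell(\N)$, which contradicts the hypothesis on $A$.

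The key step is the congruence $r_{q\ell}\equiv r_\ell \pmod{\ell}$. Fix a prime $p\mid\ell$. Then $p\mid q\ell$ and $v_p(\ell)\le v_p(q\ell)$, so the defining congruence $r_{q\ell}\equiv z_p \pmod{p^{v_p(q\ell)}}$ implies $r_{q\ell}\equiv z_p\pmod{p^{v_p(\ell)}}$. We also have $r_\ell\equiv z_p \pmod{p^{v_p(\ell)}}$ by definition. Hence $r_{q\ell}\equiv r_\ell \pmod{p^{v_p(\ell)}}$ for every prime $p\mid \ell$, and the Chinese Remainder Theorem gives $r_{q\ell}\equiv r_\ell\pmod \ell$. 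It is essential here that the same $p$-adic roots $z_p$ are fixed once and for all, for every modulus.

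Now set $m:=\frac{r_{q\ell}-r_\ell}{\ell}+qn\in\Z$. This satisfies $r_\ell+\ell m=r_{q\ell}+q\ell n$ by construction. The only remaining point, and the one needing a little care, is positivity of $m$. Since $r_{q\ell}\in(-q\ell,0]$ and $r_\ell\in(-\ell,0]$, we get $r_{q\ell}-r_\ell>-q\ell$. The integer $(r_{q\ell}-r_\ell)/\ell$ is therefore $>-q$, hence $\ge 1-q$. With $n\ge1$ this gives $m\ge 1-q+q=1$, so $m\in\N$, which completes the argument. I expect no real obstacle beyond this bookkeeping of the ranges of $r_\ell$ and $r_{q\ell}$ and the multiplicativity of $\lambda$.
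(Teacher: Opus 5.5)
Your proof is correct and complete. The congruence $r_{q\ell}\equiv r_\ell \pmod{\ell}$, which holds because the $z_p$ are fixed once and for all, combines with $\lambda(q\ell)=\lambda(q)\lambda(\ell)$ to give $\lambda(q)h_{q\ell}(n)=h_\ell(m)$ with $m=(r_{q\ell}-r_\ell)/\ell+qn\ge 1$, i.e.\ $\lambda(q)h_{q\ell}(\N)\subseteq h_\ell(\N)$; this is the standard argument behind Rice's Proposition 6.3, which the paper imports from his thesis without giving its own proof.
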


\begin{rem}\label{hl bounded}
    We also note here that each coefficient of $h_\ell(n)$ is bounded above in absolute value by $R_h\ell^{k-1}$, where $R_h>0$ is a constant depending only on $h$. 
\end{rem}
This follows by expanding $h(r_\ell+\ell n)$ with repeated applications of the binomial theorem and using the bounds $|r_\ell|<\ell$ and  $\ell\leq \lambda(\ell)\leq \ell^k$.

We import the following lemma from equation (6.1) of \cite{RiceThesis}, which will help simplify the exponential sum estimates. 
\begin{lem}\label{lem:invert_hl}
Let $h\in\mathbb Z[x]$ have degree $k\ge 2$ and positive leading coefficient, and let $h_\ell$
be the auxiliary polynomial. Write
\[
h_\ell(t)=b_k t^k+b_{k-1}t^{k-1}+\cdots+b_0,\qquad b_k>0.
\]
Let $X>0$, and suppose that $Y>0$ satisfies $h_\ell(Y)=X$. 
Then
\begin{equation}\label{eq:Y_close_to_N}
Y = \left( \frac{X}{b_k} \right)^{1/k} + O_h(1).
\end{equation}

\end{lem}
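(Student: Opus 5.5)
The plan is to compare $h_\ell$ with its leading monomial $b_k t^k$ and invert. Write $h_\ell(t)=b_k t^k\bigl(1+E(t)\bigr)$ with $E(t)=\sum_{j<k}(b_j/b_k)t^{j-k}$. The difficulty is that the lower coefficients $b_j$ may grow with $\ell$, so the $O_h(1)$ must be uniform in $\ell$. I would first make the coefficient structure explicit. Since $h_\ell(t)=h(r_\ell+\ell t)/\lambda(\ell)$, expanding gives
\[
b_j=\frac{1}{\lambda(\ell)}\binom{k}{j}a_k\ell^j r_\ell^{k-j}+\cdots,
\qquad b_k=\frac{a_k\ell^k}{\lambda(\ell)},
\]
where $a_k$ is the leading coefficient of $h$. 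Using $|r_\ell|<\ell$ (Remark~\ref{hl bounded} style bounds), I would show that $|b_j|\ll_h \ell^k/\lambda(\ell)$, and hence $|b_j/b_k|\ll_h 1$ uniformly in $\ell$, for every $j\le k$. This ratio bound is the key point. It replaces the crude bound $R_h\ell^{k-1}$ of Remark~\ref{hl bounded}, which alone would not give uniformity.

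Next, with $|b_j|/b_k\le C_h$ for all $j<k$, the polynomial $g(t):=h_\ell(t)/b_k$ is monic with bounded lower coefficients. Set $Y_0=(X/b_k)^{1/k}$, so that $g(Y)=Y_0^k$. I would then prove $|Y-Y_0|\le C'_h$ by a direct monotonicity argument. For $t\ge T_h$, a constant depending only on $h$, $g$ is increasing and satisfies
\[
(t-c)^k\le g(t)\le (t+c)^k
\]
for a suitable $c=c_h$. This follows since $(t\pm c)^k=t^k\pm kct^{k-1}+\dots$ dominates the perturbation $\sum_{j<k}C_h t^j$ once $c$ is chosen large relative to $C_h$ and $t\ge T_h$. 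Evaluating at $Y$ then gives $Y-c\le Y_0\le Y+c$.

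Finally I would handle small $Y$. If $Y<T_h$, then $Y_0^k=g(Y)\le \max_{|t|\le T_h}|g(t)|\ll_h 1$, so both $Y$ and $Y_0$ are $O_h(1)$ and the claim is trivial. Here $Y>0$ is given, and the case $X$ positive is assumed. One interpretive point remains: $Y$ is "a" solution. For $Y\ge T_h$ the solution is unique by monotonicity, and any solution below $T_h$ is covered by the trivial case, so the conclusion holds for every positive solution.

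The main obstacle is the first step, the uniform bound $|b_j/b_k|\ll_h1$. It depends only on $|r_\ell|<\ell$ and the factoring out of the common $\lambda(\ell)$, and I expect it to be routine once written as $b_j/b_k=\sum_{i\ge j}\binom{i}{j}(a_i/a_k)\,r_\ell^{i-j}\ell^{j-k}$. Each term here is $O_h(1)$ because $i-j\le k-j$ and $|r_\ell|\le \ell$.
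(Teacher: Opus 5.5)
Your proof is correct. The paper gives no argument of its own here: it imports the lemma from equation (6.1) of Rice's thesis. So there is nothing in the paper to compare against, and your write-up supplies a self-contained justification.

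The essential point is the one you single out. In
$b_j/b_k=\sum_{i\ge j}\binom{i}{j}(a_i/a_k)\,r_\ell^{i-j}\ell^{j-k}$,
each term has absolute value at most $\binom{i}{j}|a_i/a_k|\,\ell^{i-k}\le\binom{i}{j}|a_i/a_k|$, since $|r_\ell|<\ell$ and $i\le k$. This is what makes the $O_h(1)$ uniform in $\ell$. That uniformity is exactly how the paper uses the lemma, for instance to get $Y=X^{1/k-o(1)}$ and $Y_2-Y_1\asymp_h Y$ for every auxiliary polynomial arising in the iteration.

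The comparison $(t-c)^k\le g(t)\le(t+c)^k$ for $t\ge T_h$ works as you describe, with one ordering to respect. Fix $c$ first in terms of $C_h$, then choose $T_h\ge c$ in terms of $c$. This guarantees $Y-c\ge 0$, so you may take $k$-th roots.

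Your treatment of the case $0<Y<T_h$ is fine. So is your observation that the argument needs neither uniqueness of $Y$ nor the positivity normalization \eqref{positivity1}.
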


\begin{rem}
We remark that it suffices to consider intersective polynomials $h$ such that
\begin{equation}\label{positivity1}
h(x),h'(x),h''(x)>0
\end{equation}
for every real $x>0$ while proving Theorem \ref{main}. Since $A-A$ is symmetric, replacing a polynomial by its negative does not change the forbidden-difference condition. We may therefore first assume that the polynomial has positive leading coefficient.

Assume that Theorem \ref{main} has been established for all intersective polynomials satisfying \eqref{positivity1}, and let $\Tilde h$ be an arbitrary intersective polynomial of degree at least $2$ with positive leading coefficient. There exists a sufficiently large positive integer $C$, depending on $\Tilde h$, such that
\[
\Tilde h(x),\Tilde h'(x),\Tilde h''(x)>0
\]
for every real $x>C$. Define
\[
h(n):=\Tilde h(n+C).
\]
Then $h$ satisfies \eqref{positivity1} and remains intersective. Moreover,
\[
h(\N)=\big\{\Tilde h(n+C):n\in\N\big\}\subseteq\Tilde h(\N).
\]
Consequently, if $A\subset[X]$ and
\[
(A-A)\cap\Tilde h(\N)\subseteq\{0\},
\]
then also
\[
(A-A)\cap h(\N)\subseteq\{0\}.
\]
Applying \eqref{mainineq} to $h$ proves the desired bound for $\Tilde h$.
\end{rem}

Thus, we may assume throughout the paper that $h$ satisfies \eqref{positivity1}. Note that $r_\ell+\ell t>0$ for every real $t\ge1$, since $r_\ell \in (-\ell,0]$. Therefore, by our definition of $h_\ell$, \eqref{positivity1} implies 
\begin{equation}\label{positivity2}
h_\ell(t),h_\ell'(t),h_\ell''(t)>0
\end{equation}
for every real $t\ge1$.

\section{The iteration argument}\label{densincsec}
The goal of this section is to state the density increment result, and then apply it to obtain Theorem \ref{main}.  In \cite{GreenSawhney2024FSv1}, Green and Sawhney use strong induction to prove the main theorem, but in our case, a proof by strong induction is more difficult because we need to track the auxiliary polynomial coefficients. 

The density-increment scheme below is proved for each fixed
$\epsilon>0$. For such an $\epsilon$, set
\[
F(x):=(\log x)^{1/(2+2\epsilon)}.
\]
Whenever $A$ has density $\alpha$, write $L:=\log(1/\alpha)$. In the
proof of Theorem~\ref{main}, after fixing $0<\mu<1/2$, we will choose
$\epsilon$ so that $\mu<1/(2+2\epsilon)$. The goal of
Sections~\ref{rice-sieve}-\ref{applylevd} is to prove the following
density-increment scheme.

\begin{prop}\label{density-increment-calc}
There are constants $c_h, C_h, D_1, d_2>0$ depending only on $h$ and $\epsilon$ with $0 < c_h < \frac{1}{10}$ and $C_h > 10$ such that the following holds. Let $\alpha \in (0, \frac{2}{3}]$, let $X \ge C_h$, and let $\ell\in\N$. Suppose that $A \subseteq [X]$ is a set with density $\alpha$ and no nonzero difference of the form $h_\ell(n)$ with $n\in\N$. Also suppose
\[
\ell\le \exp\big((\log X)^{(2+\epsilon)/(2+2\epsilon)}\big).
\]
Then, if $\alpha>c_h$, the following holds:
\begin{enumerate}
    \item[\textup{($\ast$)}] there is a subprogression of $[X]$ with common difference of the form $\lambda(q)$ for some $q\in\N$ and length $\ge \alpha^{C_h} X$ on which the density of $A$ is at least $\alpha + \alpha^{C_h}$.  
\end{enumerate}
If $\alpha\le c_h$, then at least one of the following options holds:
 \begin{enumerate}
\item[\textup{(1)}] $\alpha  \le \exp(- c_h F(X))$;
\item[\textup{(2)}] for some integer $j$ with $1\le j\le 2L$, there is a subprogression $P$ with common difference of the form $\lambda(q)$ and length 
$\ge X e^{-D_1L^{1+\epsilon}j}$
on which the density of $A$ is at least $\alpha\bigl(1+d_2\bigr)^j$.
\end{enumerate}
\end{prop}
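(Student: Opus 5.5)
The plan follows the Green--Sawhney density-increment architecture, with $h(n)^2$ replaced by the auxiliary polynomial $h_\ell$. Let $f=\1_A-\alpha\1_{[X]}$ be the balanced function of $A$. Let $N$ be the scale with $h_\ell(N)\approx \eta X$ for a small parameter $\eta$; by Lemma~\ref{lem:invert_hl}, $N\asymp (\eta X/b_k)^{1/k}$. Since $A$ has no nonzero difference $h_\ell(n)$, the weighted count
\[
\sum_{x}\sum_{n} \1_A(x)\1_A(x+h_\ell(n))\,w(n/N)\,\1_{\mathcal S}(n)
\]
vanishes. Here $w$ is the Green--Sawhney smooth weight and $\mathcal S$ is Rice's sifted set. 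Comparing this with the main term $\alpha^2$ times the mass of $g(n)=w(n/N)\1_{\mathcal S}(n)$ gives the physical-space estimate (Lemma~\ref{physest}), which uses the sieve count for $\mathcal S$ from the complete-modulus machinery. If $A$ fails to be roughly equidistributed along the relevant short intervals or residue classes, this step already produces a density increment directly. Otherwise, Fourier inversion gives
\[
\int_{\mathbb T}|\widehat f(\theta)|^2\,|\widehat{g}_{h_\ell}(\theta)| \,d\theta \gg \alpha^2 X\,|g|,
\]
where $\widehat{g}_{h_\ell}(\theta)=\sum_n g(n)e(h_\ell(n)\theta)$.

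Next I split $\mathbb T$ into major and minor arcs. The minor arc estimate (Lemma~\ref{lem:minor_arc_estimate}) comes from the Weyl rational-approximation lemmas together with the major arc estimate (Lemma~\ref{Rishika}). The major arcs are very narrow: $|\theta-a/q|\lesssim \alpha^{-O(1)}/X$ with $q\le Q=\alpha^{-O(1)}$. On them the sum $\widehat{g}_{h_\ell}$ has size about $q^{-1/(2+\epsilon)}$ times the mass of $g$. Rice-type complete sums supply the $q$-decay after passing to complete moduli, and the rapid decay of $\widehat w$ supplies the narrowness. The hypothesis $\ell\le\exp((\log X)^{(2+\epsilon)/(2+2\epsilon)})$ enters here, because the coefficients of $h_\ell$ are $\ll \ell^{k-1}$ by Remark~\ref{hl bounded}, and this bound keeps the Weyl and sieve estimates valid at scale $N$. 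The outcome is Lemma~\ref{InitialMassDichotomy}:
\[
\sum_{q\le Q} q^{-1/(2+\epsilon)}\sum_{(a,q)=1}\int_{|\beta|\le \alpha^{-O(1)}/X}\bigl|\widehat f(a/q+\beta)\bigr|^2\,d\beta \gg \alpha^2 X.
\]

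\textbf{Case $\alpha>c_h$.} Pigeonholing over $q$ finds a single $a/q$ with $q\ll_h 1$ and $|\widehat f(\theta)|\gg \alpha^{O(1)}X$ near $a/q$. The standard Roth argument then partitions $[X]$ into progressions of common difference $\lambda(q)$ (any $\lambda(q)$ with $q\mid\lambda(q)$ works) and length $\alpha^{O(1)}X$, on which $e(\theta x)$ is almost constant. This gives $(\ast)$ with some $C_h$.

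\textbf{Case $\alpha\le c_h$, with (1) failing.} Dyadically decompose $q$. For each prime power in $q$, group by its squarefree-like coprime parts to obtain a set $\mathcal Q$ of pairwise coprime moduli with $\max q\le X^{1/(32L)}$; this bound holds because (1) fails, so $L\le F(X)$ is tiny compared with $\log X$. Some level $d\le 2L$ then carries Fourier mass. Compare this with the bound \eqref{main41} of Theorem~\ref{level d}, applied to $f$, or to $\1_A$ on a random sparsification. The weight $q^{-1/(2+\epsilon)}\approx\prod_{p\in S}p^{-1/(2+\epsilon)}$ beats the factor $(C_0L/d)^d$ once the primes exceed $L^{1+O(\epsilon)}$. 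Smaller moduli are absorbed by the classical argument, which costs $e^{-O(L^{1+\epsilon})}$ in length per prime.

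So either \eqref{main41} fails at some level $d=j$, and Theorem~\ref{level d} yields a progression modulo $\prod_{q\in S}q$ with $|S|=j$ on which $A$ has density $\ge 2^{j}\alpha\ge\alpha(1+d_2)^j$; or the level-$d$ mass is concentrated on few moduli, and the classical increment gives $j=1$. In either case I restrict the progression to common difference $\lambda(\prod q)$ by subdividing, losing at most a factor $\prod q^{k}\le e^{O(L^{1+\epsilon}j)}$ in length. This gives (2).

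The main obstacle is this last step: making the level-$d$ bookkeeping match $Xe^{-D_1L^{1+\epsilon}j}$. It needs the simplified random sparsification to control the low-frequency moduli below $L^{1+\epsilon}$ without losing more than $e^{-O(L^{1+\epsilon})}$ per prime. My first attempt would be to sparsify $\mathcal Q$ by retaining primes independently with probability about $L^{-\epsilon}$, following Green--Sawhney v2.
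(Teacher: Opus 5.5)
Your case $\alpha>c_h$ is fine and matches the paper: pigeonhole to one $a/q$ with $q\ll_h 1$, then apply Lemma~\ref{increment}. The case $\alpha\le c_h$, however, has a genuine gap at exactly the step you flag, and the fix you sketch points the wrong way. Your assertion that the weight $\prod_{p\in S}p^{-1/(2+\epsilon)}$ beats $(C_0L/d)^d$ once the primes exceed $L^{1+O(\epsilon)}$ is false. Per prime you need $p^{1/(2+\epsilon)}\gg L/d$, i.e.\ $p\gg (L/d)^{2+\epsilon}$. Concretely, take level $d=1$ and primes $p\sim L^{1+\epsilon}$ dividing $q$ to the first power. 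Theorem~\ref{level d} bounds the unweighted mass by $C_0L\alpha^2X^2$, but the weight recovers only $L^{-1/2-O(\epsilon)}$. That leaves $L^{1/2-O(\epsilon)}\alpha^2X^2$, which is useless against a total mass of size $\asymp\alpha^2X^2$. So at low levels the level-$d$ inequality alone only handles primes above roughly $L^{2+\epsilon}$. Nothing in your plan handles primes in $(L^{1+\epsilon},L^{2+\epsilon})$: level $d$ is too weak for them, and their product is far larger than the $e^{O(L^{1+\epsilon})}$ you can afford in a common modulus.

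That intermediate range is what random sparsification is for. The primes below $B=WL^{1+\epsilon}$ need no sparsification at all.
\begin{itemize}
\item The paper puts all of them, together with the cube-full part $t$ of $q$, into $R_1=t\prod_{p\le B}p^2$, at total (not per-prime) cost $e^{O(WL^{1+\epsilon})}$. It passes to residue classes mod $R_1$ via \eqref{eq:restriction-identity-endpoint}. It then applies Theorem~\ref{level d} to phase-twisted copies of $\1_A$ with the pairwise coprime moduli $p^2$, $p>B$, obtaining \eqref{eq:E-r-one}. It uses $\1_A$ rather than the balanced function because the second alternative controls the average of $|f|$.
\item For the same class of denominators (with $\kappa-1$ large primes at scales $L^{\beta_i}$), it keeps at scale $L^{\beta_i}$ only about $WL^{1+\epsilon}/((\kappa-1)\beta_i\log L)$ primes. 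This is a scale-dependent retention rate $\approx\frac{W}{\kappa-1}L^{1+\epsilon-\beta_i}$, chosen so that all surviving denominators divide one $R_\kappa$ with $\log R_\kappa\ll WL^{1+\epsilon}$. The result is either a density-doubling increment with $j=1$ or the second endpoint bound \eqref{eq:E-r-kappa}.
\item A uniform rate like $L^{-\epsilon}$ cannot achieve this: on $(L^{1+\epsilon},L^{2+\epsilon})$ alone it leaves $\log R$ of order $L^2$.
\end{itemize}
The decisive step missing from your plan is to take the geometric mean of \eqref{eq:E-r-one} and \eqref{eq:E-r-kappa}, with exponents $1-\rho$ and $\rho$ where $\rho=1/(2+\epsilon/2)$, against the weight $q^{-\sigma}\le t^{-\sigma}L^{-\sigma(\beta_1+S_0)}$. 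This gives a saving $L^{-b(\kappa-1)-aS_0}$ with $a,b>0$. Using $\sum_{t\text{ cube-full}}t^{-\sigma}<\infty$, these bounds sum over all classes with $\kappa\ge2$ to at most half the mass. Only the remaining $\kappa=1$ mass, on fractions whose denominators divide $t\prod_{p\le B}p^2$, is sent to Lemma~\ref{increment}.

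A secondary point: this comparison is only up to constant factors, so you need the mass in the form of Lemma~\ref{InitialMassDichotomy}, with a single common shift $\xi$ and full size $\gg\alpha^2X^2$. Your displayed unweighted integral over $|\beta|\le\alpha^{-O(1)}/X$ loses the window length when you pigeonhole to one $\beta$. That loss alone would break the argument.
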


We now state and prove the following lemma, which when combined with Proposition \ref{density-increment-calc}, yields Theorem \ref{main}. 
\begin{lem}[Inductive step]\label{lem:GS-propagate}
Let $c_h,C_h, D_{1}, d_{2}$ be the constants from Proposition~\ref{density-increment-calc}.
Then there exist constants $c_0\in(0,c_h]$ and $X_{\text{min}}\ge C_h$ depending only on $h$ and $\epsilon$ such that the following holds.

Let $\ell\in\N$, let $X\ge X_{\text{min}}$, and let $A\subseteq [X]$ have density $\alpha:=|A|/X\in(0,2/3]$.
Assume $(A-A)\cap h_\ell(\N)\subseteq\{0\}$.
Suppose $\alpha>\exp(-c_hF(X)).$ Let $P\subseteq [X]$ be the progression produced by Proposition~\ref{density-increment-calc}
in one of the options \textup{($\ast$)} or \textup{(2)}. In the case where $P$ is produced by \textup{($\ast$)}, assume also that $\alpha>c_h$.
Write $X':=|P|$, and set $\alpha':=|A\cap P|/|P|$.
If $\alpha'\ \le\ e^{-c_0 F(X')},$
then $\alpha\ \le\ e^{-c_0F(X)}.$
\end{lem}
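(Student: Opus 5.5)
The proof is a contrapositive calculation: assume $\alpha' \le e^{-c_0F(X')}$ and $\alpha > e^{-c_0F(X)}$, and derive a contradiction once $c_0$ is small and $X_{\min}$ is large. Since $c_0\le c_h$, the hypothesis $\alpha>\exp(-c_hF(X))$ is automatic from $\alpha>e^{-c_0F(X)}$. The only inputs are $X'\le X$, which gives $F(X')\le F(X)$, and the lower bounds on $X'$ and $\alpha'$ from Proposition~\ref{density-increment-calc}. We treat the two options separately.

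\emph{Case} ($\ast$). Here $\alpha>c_h$, $X'\ge \alpha^{C_h}X\ge c_h^{C_h}X$, and $\alpha'\ge\alpha+\alpha^{C_h}>c_h$. Since $X'\ge c_h^{C_h}X_{\min}$, we may choose $X_{\min}$ so large that $e^{-c_0F(X')}\le e^{-c_0F(c_h^{C_h}X_{\min})}<c_h$. This contradicts $\alpha'\le e^{-c_0F(X')}$, so in fact the case cannot occur once $X_{\min}$ is large. Note that $X_{\min}$ depends on $c_0$, so we fix $c_0$ first and then $X_{\min}$.

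\emph{Case} (2). Here $\alpha\le c_h$, $L=\log(1/\alpha)\ge\log(1/c_h)>2$, and for some $1\le j\le 2L$ we have $\alpha'\ge\alpha(1+d_2)^j$ and $\log X'\ge \log X-D_1L^{1+\epsilon}j$. Taking logarithms, the assumption on $\alpha'$ gives
\[
L-j\log(1+d_2)\ \ge\ c_0F(X').
\]
We want to conclude $L\ge c_0F(X)$. It therefore suffices to show $c_0\bigl(F(X)-F(X')\bigr)\le j\log(1+d_2)$. By the mean value theorem,
\[
F(X)-F(X')\le \frac{1}{2+2\epsilon}\,(\log X')^{-\frac{1+2\epsilon}{2+2\epsilon}}\,D_1L^{1+\epsilon}j,
\]
where we use that $F$ composed with $\exp$ is concave, so the derivative is evaluated at $\log X'$.

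This step is the main obstacle: making $\log X'$ comparable to $\log X$ and controlling $L^{1+\epsilon}$ against $(\log X)^{\frac{1+2\epsilon}{2+2\epsilon}}$. We have $L<c_hF(X)=c_h(\log X)^{1/(2+2\epsilon)}$, so
\[
L^{1+\epsilon}j\le 2L^{2+\epsilon}\le 2c_h^{2+\epsilon}(\log X)^{\frac{2+\epsilon}{2+2\epsilon}}=o(\log X).
\]
Hence $\log X'\ge \tfrac12\log X$ for $X\ge X_{\min}$, and $X'$ is large. Substituting,
\[
F(X)-F(X')\ll_{\epsilon} D_1L^{1+\epsilon}j\,(\log X)^{-\frac{1+2\epsilon}{2+2\epsilon}}\le D_1 c_h^{1+\epsilon}j\,(\log X)^{\frac{1+\epsilon}{2+2\epsilon}-\frac{1+2\epsilon}{2+2\epsilon}}=D_1c_h^{1+\epsilon}j\,(\log X)^{-\frac{\epsilon}{2+2\epsilon}}.
\]
This is at most $j\log(1+d_2)$ for large $X$, even without using $c_0$, so any $c_0\le \min(c_h,1)$ works.

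The conclusion is $L\ge c_0F(X)$, that is, $\alpha\le e^{-c_0F(X)}$, contradicting the standing assumption. Finally, we check that the exponents are consistent. The choice of $F$ with exponent $1/(2+2\epsilon)$ is exactly what makes $L^{1+\epsilon}$ fall short of $(\log X)^{(1+2\epsilon)/(2+2\epsilon)}$ by a power $(\log X)^{-\epsilon/(2+2\epsilon)}$. We also confirm that all constants depend only on $h$ and $\epsilon$.
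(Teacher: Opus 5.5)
Your argument proves the lemma as literally stated, and your treatment of option (2) is essentially the paper's. Both reduce to $c_0(F(X)-F(X'))\le j\log(1+d_2)$ and bound the left side by a constant times $j(\log X)^{-\epsilon/(2+2\epsilon)}$. The paper uses the inequality $a^{\theta}-b^{\theta}\le (a-b)a^{\theta-1}$ for $0\le b\le a$, $\theta=1/(2+2\epsilon)$, which needs no comparison of $\log X'$ with $\log X$. Your mean value theorem at $\log X'$ costs the extra step $\log X'\ge\frac12\log X$ (via $j\le 2L$). That step is harmless, and your option (2) bound is uniform in $c_0\le c_h$, just like the paper's.

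The real difference is option $(\ast)$, and it has a cost you should be aware of.

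\begin{itemize}
\item \textbf{What you do.} You make the hypothesis vacuous by taking $X_{\min}$ so large that $e^{-c_0F(X')}<c_h<\alpha'$. This requires $c_0F(c_h^{C_h}X_{\min})>\log(1/c_h)$, so your $X_{\min}$ must grow as $c_0\to0$.
\item \textbf{What the paper does.} It verifies $c_0\,y(\log X)^{1/(2+2\epsilon)-1}\le\log R$ directly from $y=\log(X/X')\le C_h\log(1/c_h)$ and $\log R\ge\log(1+c_h^{C_h-1})$. This gives a single $X_{\min}$ valid for every $c_0\in(0,c_h]$.
\item \textbf{Why the uniformity matters.} It is exactly what the Remark following Lemma~\ref{lem:GS-propagate} asserts, and the proof of Theorem~\ref{main} relies on it. There $c_0$ is shrunk \emph{after} $X_{\min}$ is fixed, so that $2/3\le e^{-c_0F(t)}$ for all $t\le X_{\min}$, i.e.\ $c_0F(X_{\min})\le\log(3/2)$.
\item \textbf{The incompatibility.} Your constants always satisfy $c_0F(X_{\min})>c_0F(c_h^{C_h}X_{\min})>\log(1/c_h)>\log 10$. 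So the lemma in your form, though true, cannot be fed into the iteration as the paper runs it.
\end{itemize}

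The fix is immediate: handle $(\ast)$ the same way you handled (2). Since $\log(X/X')\le C_h\log(1/c_h)$, you get $c_0(F(X)-F(X'))\ll_{h,\epsilon}(\log X)^{1/(2+2\epsilon)-1}$. For large $X$ this is below $\log(1+c_h^{C_h-1})$, uniformly in $c_0\le c_h$.
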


\begin{proof}
Put
\[
R := \frac{\alpha'}{\alpha} >1,
\qquad
y := \log \left( \frac{X}{X'} \right) \in[0,\log X].
\]

Then,
\[
\alpha= \frac{\alpha'}{R} \le \frac{e^{-c_0F(X')}}{R},
\]
so it suffices to prove
\begin{equation}\label{eq:key-prop}
c_0 ( F(X)-F(X') ) \le \log R.
\end{equation}

Since $0\le y\le \log X$ and $\displaystyle \frac{1}{2+2\epsilon}\leq 1$, we have
\begin{align*}
    c_0(F(X)-F(X') )&= c_0((\log{X})^{1/(2+2\epsilon)}-(\log{X'})^{1/(2+2\epsilon)}) \\
    & = c_0(\log{X})^{1/(2+2\epsilon)}\left(1-\left(1-\frac{y}{\log{X}}\right)^{1/(2+2\epsilon)}\right) \\
     & \leq c_0(\log{X})^{1/(2+2\epsilon)}\left(1-\left(1-\frac{y}{\log{X}}\right)\right)\\
    &=c_0  y(\log X)^{1/(2+2\epsilon)-1}.
\end{align*}
Hence it suffices to check \begin{equation}\label{eq:Fdrop}
    c_0 y(\log X)^{1/(2+2\epsilon)-1}\leq \log R
\end{equation}
in each alternative, using only the corresponding bounds for $R$ and $y$.

\medskip
\noindent\emph{Case \textup{($\ast$)}.}
Here $\alpha> c_h$ and $\alpha'\ge \alpha+\alpha^{C_h}$, so $R\ge 1+\alpha^{C_h-1}\ge 1+c_h^{C_h-1}$, and also $\frac{X'}{X} \ge \alpha^{C_h}\ge {c_h}^{C_h}$, hence $y\ll_{h,\epsilon}1$.
So by choosing some $c_0 \leq c_h$ and $X_{\text{min}}$ large enough, for $X\ge X_{\text{min}}$ we have \eqref{eq:Fdrop}.

\medskip
\noindent\emph{Case \textup{(2)}.}
Let $j$ be the integer supplied by option \textup{(2)}. We assume that $\alpha > e^{- c_h F(X)}$, which is equivalent to
\begin{equation}\label{eq:L<F}
L<c_hF(X).
\end{equation}
Option \textup{(2)} gives $y\ll_{h,\epsilon} L^{1+\epsilon}j$ and $R\ge (1+d_2)^j$. Using \eqref{eq:Fdrop} and \eqref{eq:L<F} we obtain
\[
  c_0 y(\log X)^{1/(2+2\epsilon)-1}
\ll_{h,\epsilon} c_0L^{1+\epsilon}j
(\log X)^{1/(2+2\epsilon)-1}
\ll_{h,\epsilon} c_0jF(X)^{1+\epsilon}(\log X)^{1/(2+2\epsilon)-1}=o_{h,\epsilon}(j),
\]
as $X\to\infty$, after enlarging $X_{\text{min}}$ if necessary, we have
\[
c_0( F(X)-F(X'))\le j\log(1+d_2)\le \log R,
\]
which proves \eqref{eq:key-prop} in this case. 
\end{proof}

\begin{rem} Once we have certain choices for $c_0$ and $X_{\text{min}}$ which satisfy the above, the same conclusion holds with $c_0$ replaced by any $c_0'\in(0,c_0]$ and $X_{\text{min}}$ replaced by any $X_{\text{min}}'>X_{\text{min}}$. In particular, we may take $c_0$ as small as needed and $X_{\text{min}}$ as large as needed to perform the iteration procedure below.
\end{rem}

We may now prove Theorem \ref{main} assuming Proposition \ref{density-increment-calc}. 

\begin{proof}[Proof of Theorem \ref{main} assuming Proposition \ref{density-increment-calc}]
Fix $0<\mu<1/2$, and choose $\epsilon>0$ so small that $\mu<\frac{1}{2+2\epsilon}.$ Let $A\subseteq [X]$ have $|A|=\alpha X$ and $(A-A)\cap h(\N)\subseteq\{0\}$. Put $L_0:=\log(1/\alpha)$.

Let $c_h,C_h$ be the constants from Proposition~\ref{density-increment-calc}, applied with this choice of $\epsilon$, and let $c_0$ and $X_{\text{min}}$
be as in Lemma~\ref{lem:GS-propagate}. We also assume that $c_0$ is sufficiently small and that
$X_{\text{min}}$ is sufficiently large for the estimates below. In
particular, enlarge $X_{\text{min}}$ if necessary so that
\[
e^{-c_hF(t)}<c_h
\qquad\text{for every }t\ge X_{\text{min}}.
\]

It suffices to show $\alpha\leq e^{-c_0F(X)}$. Assume for the sake of contradiction that
\begin{equation}\label{eq:L0F}
\alpha> e^{-c_0F(X)} \text{ or equivalently } L_0=\log(1/\alpha)\ <\ c_0F(X)
\ =\ c_0(\log X)^{1/(2+2\epsilon)}.
\end{equation}
Also assume that $X>X_{\text{min}}$. 

Set $A_0:=A$, $X_0:=X$, $\alpha_0:=\alpha$, and $\ell_0:=1$. Since $r_1=0$ and $\lambda(1)=1$, we have $h_{\ell_0}=h$, and hence
\[
(A_0-A_0)\cap h_{\ell_0}(\N)\subseteq\{0\}.
\]
The goal is to iteratively apply Proposition~\ref{density-increment-calc}.
Starting from $n=0$, assume inductively that we have $A_n\subseteq [X_n]$ with density $\alpha_n:=|A_n|/X_n$ and
\[
(A_n-A_n)\cap h_{\ell_n}(\N)\subseteq\{0\}.
\]
We stop the iteration at time $n\geq 0$ if we have at least one of the following conditions.
\begin{center}
\begin{itemize}
\item Option \textup{(1)} holds.
\item $X_n<X_{\text{min}}$.
\item $\ell_n>\exp\big((\log X_n)^{(2+\epsilon)/(2+2\epsilon)}\big).$
\item $\alpha_n>2/3$.

\end{itemize}
\end{center}
Suppose none of these stopping conditions holds at step $n$. Then
\[
\alpha_n>e^{-c_hF(X_n)}.
\]
Indeed, this follows from the failure of option \textup{(1)} when
$\alpha_n\le c_h$, while for $\alpha_n>c_h$ it follows from
$X_n\ge X_{\text{min}}$ and our choice of $X_{\text{min}}$. Thus the
density hypothesis in Lemma~\ref{lem:GS-propagate} is satisfied at every
step that is performed.

If $\alpha_n>c_h$, we use the progression $P_n\subseteq [X_n]$ supplied by \textup{($\ast$)}. If $\alpha_n\le c_h$, then Proposition~\ref{density-increment-calc} supplies a progression $P_n\subseteq [X_n]$ as in \textup{(2)}. In either case, write the common difference of $P_n$ as $\lambda(q_{n+1})$ and write
\[
P_n=\{x_n+\lambda(q_{n+1})y:1\le y\le X_{n+1}\}\subseteq [X_n]
\]
for some integer $x_n$, where $X_{n+1}:=|P_n|$. Define
\[
A_{n+1}:=\{y\in [X_{n+1}]:x_n+\lambda(q_{n+1})y\in A_n\},
\qquad
\ell_{n+1}:=\ell_nq_{n+1}.
\]
Then $\alpha_{n+1}:=|A_{n+1}|/X_{n+1}$ is the density of $A_n$ on $P_n$, and Proposition \ref{inheritance} gives
\[
(A_{n+1}-A_{n+1})\cap h_{\ell_{n+1}}(\N)\subseteq\{0\}.
\]

If the iteration stops in option \textup{(1)} at some index $m$, then, by choosing $c_0$ smaller than $c_h$, we have $\alpha_m\le e^{-c_hF(X_m)}\le e^{-c_0F(X_m)}$. 
Applying Lemma~\ref{lem:GS-propagate} repeatedly back through the previous steps yields
$\alpha=\alpha_0\le e^{-c_0F(X)}$, which contradicts \eqref{eq:L0F}. 

On the other hand, suppose the iteration stops when $X_{m}<X_{\text{min}}$ while $\alpha_m\le 2/3$. We may choose $c_0$ small enough that
\[
\frac23\ \le\ e^{-c_0F(t)} \qquad\text{for all } 1\le t\le X_{\text{min}}.
\]
Consequently, $\alpha_m\le 2/3\le e^{-c_0F(X_m)}$, so, as before, applying Lemma~\ref{lem:GS-propagate} repeatedly back through the previous steps yields
$\alpha=\alpha_0\le e^{-c_0F(X)}$, contradicting \eqref{eq:L0F}. 

Thus, if the iteration terminates in either of the first two stopping conditions, then we are done. Therefore, it remains to show that the iteration must terminate in finitely many steps and to rule out the cases where the iteration stops because of one of the following two conditions:
\begin{itemize}
\item $\ell_m>\exp\big((\log X_m)^{(2+\epsilon)/(2+2\epsilon)}\big).$
\item $\alpha_m>2/3$; 

\end{itemize}

We will prove that the total number of steps is $\ll_{h,\epsilon}L_0$ and further prove that  $\ell_m\leq\exp\big((\log X_m)^{(2+\epsilon)/(2+2\epsilon)}\big).$ Suppose there have been $m$ steps so far in the procedure. Write $\mathcal N_\ast,\mathcal N_2$ for the sets of indices $n\in\{0,\dots,m-1\}$ for which the step
taken is \textup{($\ast$)}, \textup{(2)}, respectively.  If $n\in\mathcal N_2$, let $j_n$ denote the integer supplied by option \textup{(2)}.

\begin{itemize}

\item\emph{Option \textup{($\ast$)}.}
When \textup{($\ast$)} is used we have $\alpha_n>c_h$ and $\alpha_{n+1}\ge \alpha_n+\alpha_n^{C_h}\ge \alpha_n+c_h^{C_h}$.
Since $\alpha_n\le 1$ always, this implies $|\mathcal N_\ast|=O_{h,\epsilon}(1)$.

\item \emph{Option \textup{(2)}.}
Each such step multiplies density by at least $(1+d_2)^{j_n}$, so using $\alpha_n\le 1$ we have
\[
1\ \ge\ \alpha_m\ \ge\ \alpha_0(1+d_2)^{\sum_{n\in\mathcal N_2}j_n}.
\]
Taking logarithms gives
\begin{equation}\label{eq:sum-j}
\sum_{n\in\mathcal N_2}j_n
\le \frac{\log(1/\alpha_0)}{\log(1+d_2)}
=\frac{L_0}{\log(1+d_2)}
\ll_{h,\epsilon} L_0.
\end{equation}

\end{itemize}
Combining the above bounds, we obtain
\begin{equation}\label{eq:mcount}
m\ =\ |\mathcal N_\ast|+|\mathcal N_2|
 \ll_{h,\epsilon}\ L_0.
\end{equation}
Now that we know there are finitely many steps in the procedure, we may assume $m$ is the stopping time. We will prove that, at the stopping time $m$, one has
\begin{equation}\label{ellbound}
\ell_m\le \exp\big((\log X_m)^{(2+\epsilon)/(2+2\epsilon)}\big).
\end{equation}
To this end, we first bound $X_0/X_m$, and to do this we bound the shrinkage factor $X_n/X_{n+1}$ for each increment option, using only the corresponding length bound from
Proposition~\ref{density-increment-calc} and the monotonicity $L_n\le L_0$, where $L_n:=\log(1/\alpha_n)$.

\smallskip
\begin{itemize}
\item \emph{Option \textup{($\ast$)}}. If $n\in\mathcal N_\ast$ then $X_{n+1}\ge \alpha_n^{C_h}X_n$ and $\alpha_n>c_h$, so $X_n/X_{n+1}\le c_h^{-C_h}$.

\item \emph{Option \textup{(2)}}. If $n\in\mathcal N_2$, then option \textup{(2)} gives $X_n/X_{n+1}
\ \le\ \exp\bigl(D_1L_0^{1+\epsilon}j_n\bigr).$

\end{itemize}

\smallskip
Multiplying these bounds over $n=0,\dots,m-1$ and using \eqref{eq:sum-j} yields
\begin{equation}\label{eq:Xratio}
\frac{X_0}{X_m}
\ \ll_{h, \epsilon}\ \exp(D_1L_0^{1+\epsilon}\sum_{n\in \mathcal{N}_2}j_n)\leq \exp(CL_0^{1+\epsilon}L_0),
\end{equation}
where $C$ is some constant depending only on $h$ and $\epsilon$.
In particular,
\[\label{eq:Xm-lower}
X_m\ \ge\ X_0e^{-CL_0^{2+\epsilon}}.
\]
By \eqref{eq:L0F}, $L_0<c_0(\log{X_0})^{1/(2+2\epsilon)}$, so
\begin{equation}
X_m\geq X_0 \exp\left(-Cc_0(\log X_0)^{(2+\epsilon)/(2+2\epsilon)}\right)\geq X_0^{1/2}.
\end{equation}

 Since $X_j\ge X_m>2$ for all $0\le j\le m$, at step $n$ we have $\lambda(q_{n+1})(X_{n+1}-1)<X_n.$
Since $\lambda(q)\ge q$, this gives
\[
q_{n+1}\ \le\ \frac{X_n}{X_{n+1}-1}\ \le\ 2\frac{X_n}{X_{n+1}}.
\]
Multiplying gives
\[\label{eq:ell-upper}
\ell_m=q_1q_2\cdots q_m\ \le\ 2^m\frac{X_0}{X_m}.
\]
By \eqref{eq:mcount}, $m\ll_{h,\epsilon} L_0$, and using \eqref{eq:Xratio}, we obtain
\begin{align*}
\ell_m\ &\le \ \exp\left(C_{h,\epsilon}L_0+CL_0^{2+\epsilon}\right)\\
&\leq \exp(2CL_0^{2+\epsilon})\\&\leq \exp\left(2Cc_0^{2+\epsilon}(\log X_0)^{(2+\epsilon)/(2+2\epsilon)}\right)\\
&\leq \exp\left(\left(\frac{1}{2}\log X_0\right)^{(2+\epsilon)/(2+2\epsilon)}\right),
\end{align*}
where the last inequality follows from choosing $c_0$ small enough.
Since $X_m\geq X_0^{1/2}$, we have $\log{X_m}\geq \frac{1}{2}\log{X_0}$, which gives
\begin{equation}\label{eq:l_m bounded}
    \ell_m\ \le\ \exp\left(\left(\frac{1}{2}\log X_0\right)^{(2+\epsilon)/(2+2\epsilon)}\right)\ \le\ \exp\big((\log X_m)^{(2+\epsilon)/(2+2\epsilon)}\big),
\end{equation}
which proves \eqref{ellbound}.
\medskip

Finally, we show that $\alpha_m\leq 2/3$ throughout the iteration.

Since $A_m$ contains no nonzero difference of the form $h_{\ell_m}(n)$, in particular, it contains no difference $h_{\ell_m}(1)$. Hence the sets
$A_m \text{ and }
A_m+h_{\ell_m}(1)$
are disjoint. Note here that
$A_m\sqcup (A_m+h_{\ell_m}(1))\subseteq [1,X_m +h_{\ell_m}(1)]$ and recall $|A_m|=\alpha_m X_m$. 
This implies
\begin{align*}
 2\alpha_m X_m  & =2|A_m|\leq X_m+h_{\ell_m}(1)\\
 \alpha_m &\leq \frac{1}{2}+\frac{h_{\ell_m}(1)}{2X_m}
    \end{align*}

From Remark \ref{hl bounded} we have  $h_{\ell_m}(1)\ll_h {\ell_m}^{k-1}$ and \eqref{eq:l_m bounded} gives $h_{\ell_m}(1)=o_{h,\epsilon}(X_m)$. This gives

\[
\alpha_m\leq \frac12+o_{h,\epsilon}(1)<\frac23
\]
for $X$ sufficiently large.

\medskip

Thus the iteration cannot stop because of $\alpha_m>2/3$ or because of $\ell_m>\exp\big((\log X_m)^{(2+\epsilon)/(2+2\epsilon)}\big)$. Therefore it must stop either in option \textup{(1)} or with $X_m<X_{\text{min}}$ and $\alpha_m\le 2/3$, and both cases were already shown to imply
$\alpha\le e^{-c_0F(X)}$. This completes the proof of Theorem \ref{main}. 
\end{proof}

\begin{lem}\label{increment}
 Let $X\ge1$, let $A\subseteq[X]$, set $\alpha:=|A|/X$, and define $f_A:=\1_A-\alpha\1_{[X]}$. Let $\eta \in (0,1)$, $q \in\N$, and $\xi \in \R$. Set $T := \max(1, |\xi|X)$. Suppose that either 
\begin{equation}\label{roth-increment} \Big|\widehat{f_A}\Big(\frac{a}{q} + \xi \Big)\Big| \ge \eta \alpha X\end{equation} for some $a \in \Z$ or that
\begin{equation}\label{szem-condition} \sum_{a\mdsub{q}}\Big|\widehat{f_{A}}\Big(\frac{a}{q} + \xi \Big)\Big|^2\ge \eta \alpha^2 X^2, \text{ where } 3 q \leq X .\end{equation}
Then there is a progression $P \subseteq [X]$ with common difference $\lambda(q)$ and length $\gg \lambda(q)^{-1}T^{-1} \eta X$ on which the density of $A$ is at least $(1 + \eta/20) \alpha$.
\end{lem}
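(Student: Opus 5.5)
First reduce the second hypothesis \eqref{szem-condition} to the first, so that the whole argument runs in the Roth setting. Note $\widehat{f_A}(0)=0$, so the term $a=0$ contributes at most $|\widehat{f_A}(\xi)|^2$; we do not need to remove it. Using Parseval on the progressions mod $q$, or simply the trivial bound, the sum in \eqref{szem-condition} has $q$ terms. So one term satisfies $|\widehat{f_A}(a/q+\xi)|^2\ge \eta\alpha^2X^2/q$. This pigeonholing loses a factor of $q$, which is too weak if we want length $\gg \eta X/(\lambda(q)T)$ rather than something worse.

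We therefore handle \eqref{szem-condition} directly. We partition $[X]$ into progressions $P$ of common difference $\lambda(q)$, which is a multiple of $q$ since $\lambda(q)\ge q$ and $q\mid\lambda(q)$ by complete multiplicativity. Each progression has length about $N:=c\eta X/(\lambda(q)T)$, so on each $P$ the phase $e(-n(a/q+\xi))$ equals a constant (depending on $P$ and $a$) times $e(-n\xi)$. Moreover $n\xi$ varies by at most $|\xi|\lambda(q)N\le c\eta$ across $P$. Hence
\[
\widehat{f_A}(a/q+\xi)=\sum_P e(-x_P(a/q+\xi))\,\Big(\sum_{n\in P}f_A(n)\Big)+O(c\eta\alpha X),
\]
after bounding $\sum_P\sum_{n\in P}|f_A(n)|\le 2\alpha X$. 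Within a fixed residue class $b \bmod q$, the progressions $P$ give a single coefficient $e(-ba/q)$. So, up to the error, $\widehat{f_A}(a/q+\xi)=\sum_{b\bmod q}e(-ba/q)\,\sigma_b(\xi)$, where $\sigma_b$ is a smooth-in-$P$ combination with $|\sigma_b|\le\sum_{P\subseteq b+q\Z}|f_A(P)|$, writing $f_A(P):=\sum_{n\in P}f_A(n)$.

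Next apply Parseval over $a \bmod q$. This gives
\[
\sum_a|\widehat{f_A}(a/q+\xi)|^2\ll q\sum_b\Big(\sum_{P\subseteq b+q\Z}|f_A(P)|\Big)^2+O(c^2\eta^2 q\,\alpha^2X^2).
\]
This is where $3q\le X$ enters: it ensures each residue class meets $[X]$ in $\asymp X/q$ points. In the Roth case \eqref{roth-increment} we instead get directly $\sum_P|f_A(P)|\ge \eta\alpha X/2$, for $c$ small.

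The main obstacle is turning the $L^2$ statement into the $L^1$ statement $\sum_P|f_A(P)|\gg\eta\alpha X$. The cleanest route is the Cauchy--Schwarz bound $\sum_P|f_A(P)|$ versus $\big(\sum_b(\cdots)^2\big)$, combined with $|f_A(P)|\le 2|P|$ together with the density structure. If that is delicate, we instead use the standard fact that $\sum_a|\widehat{f_A}(a/q+\xi)|^2$ equals $q\sum_b|\widehat{f_A\1_{b+q\Z}}(\xi)|^2$, giving $\sum_b|\widehat{f_A\1_{b+q\Z}}(\xi)|^2\ge\eta\alpha^2X^2/q$. Since each class has $\sum_{n\equiv b}\1_A\le$ its size, we bound $|\widehat{f_A\1_{b+q\Z}}(\xi)|\le \alpha_b X/q+\alpha X/q$, where $\alpha_b$ is the density of $A$ on the class. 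The $L^2$ mass then forces $\sum_b|\widehat{\cdots}|\gg\eta\alpha X$ unless some class has density $\ge 2\alpha$, which itself yields an increment on a long progression. Either way we reach $\sum_P|f_A(P)|\ge \eta\alpha X/4$.

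Finally we perform the standard Roth extraction. Since $\sum_P f_A(P)=\sum_n f_A(n)=0$, we get $\sum_P(|f_A(P)|+f_A(P))\ge\eta\alpha X/4$. Hence some $P$ (the $P$ have total size $X$) satisfies $f_A(P)\ge \eta\alpha|P|/8$ up to boundary pieces. Discarding progressions shorter than $N/2$ costs at most $q\lambda(q)N\ll c\eta X$. The resulting $P$ has density $\ge(1+\eta/20)\alpha$ and length $\gg\eta X/(\lambda(q)T)$, as required.
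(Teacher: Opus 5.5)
Your overall plan matches the paper's proof, which simply reruns Green--Sawhney's Lemma~6.2 with pieces of common difference $\lambda(q)$ (legitimate since $q\mid\lambda(q)$). Your fallback in the fourth paragraph is essentially that argument. However, two steps fail as you wrote them.

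\emph{The main line ``approximate on pieces, then Parseval over $a$'' does not work.} Your error $E_a$ is only bounded by $O(c\eta\alpha X)$ for each $a$. Hence $\sum_{a\bmod q}|E_a|^2=O(c^2\eta^2q\alpha^2X^2)$, which is negligible against $\eta\alpha^2X^2$ only if $c\ll(\eta q)^{-1/2}$. That is exactly the loss in $q$ you rejected in your first paragraph. Cauchy--Schwarz does not rescue this: it bounds an $L^1$ sum above by an $L^2$ sum, not below. The trivial bound $|f_A(P)|\le|P|$ only yields $\sum_P|f_A(P)|\gg\eta\alpha^2X$, which is off by a factor $\alpha$. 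The $L^2\to L^1$ conversion needs an $L^\infty$ bound that only the density dichotomy supplies, and it must be done before approximating, in this order:
\begin{enumerate}
\item Use the exact identity $\sum_{a\bmod q}|\widehat{f_A}(a/q+\xi)|^2=q\sum_{b\bmod q}|\widehat{f_A\1_{b+q\Z}}(\xi)|^2$.
\item If some class $b\bmod q$ has density at least $2\alpha$, cut it into pieces of difference $\lambda(q)$ and length $\asymp N$ and average.
\item Otherwise $|\widehat{f_A\1_{b+q\Z}}(\xi)|\le 3\alpha(X/q+1)\le 4\alpha X/q$ by $3q\le X$, whence $\sum_b|\widehat{f_A\1_{b+q\Z}}(\xi)|\ge\eta\alpha X/4$.
\item Only now replace each class sum by $\sum_P e(-x_P\xi)f_A(P)$. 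The error is then $O(c\eta\alpha X)$ in $L^1$, which is harmless.
\end{enumerate}

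\emph{Discarding short progressions at the end is not affordable.} With the natural partition there is at most one short piece per class modulo $\lambda(q)$. So there are at most $\lambda(q)$ of them (not $q\lambda(q)$), of total length at most $\lambda(q)N\le c\eta X$. What you must preserve is the positive mass $\sum_P f_A(P)_+\gg\eta\alpha X$. The short pieces can absorb up to $\sum_{\text{short}}|A\cap P|\le c\eta X$ of it, and nothing prevents $A$ from concentrating there. Making $c\eta X\ll\eta\alpha X$ would force $c\ll\alpha$, losing a factor $\alpha$ in the length.

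The remedy is to discard nothing. Set $N:=\max\bigl(1,\lfloor c\eta X/(\lambda(q)T)\rfloor\bigr)$. Every class modulo $\lambda(q)$ that meets $[X]$ then contains at least $N$ of its points, so it can be partitioned into progressions with lengths in $[N,2N)$. This keeps the phase variation at most $2c\eta$. Then $\sum_P f_A(P)\ge 0$, $\sum_P|P|\le X$ and the lower bound on $\sum_P|f_A(P)|$ directly give a piece of density at least $(1+\eta/20)\alpha$ and length $\ge N\gg\eta X/(\lambda(q)T)$.
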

\begin{proof}
The proof is nearly identical to the proof of Lemma 6.2 of Green--Sawhney. Since $q\mid\lambda(q)$, each residue class modulo $q$ may be partitioned into progressions of common difference $\lambda(q)$, and the resulting length is $\gg \lambda(q)^{-1}T^{-1}\eta X$.
\end{proof}

\section{Rice's exponential sum estimates}\label{rice-sieve}

In this section we record the sieve definitions and
two exponential sum estimates of Rice \cite{Ric19}. Fix an intersective polynomial $h$ and consider the auxiliary polynomial $h_\ell$ with $\ell\in \N$. For each prime $p$, define $\gamma_\ell(p)$ to be the smallest exponent such that
the derivative $h_\ell'$ is not identically zero modulo $p^{\gamma_\ell(p)}$.
Let $j_\ell(p)$ denote the number of roots of $h_\ell'$ modulo $p^{\gamma_\ell(p)}$. Then $j_\ell(p)\le\min\{k-1,p-1\}$ when $\gamma_\ell(p)=1$ and $j_\ell(p)\le p^{\gamma_\ell(p)-1}(p-1)$ when $\gamma_\ell(p)>1$. For $U\geq 2$ define
\[
W_\ell(U)\ :=\ \Bigl\{ n\in \N : h_\ell'(n)\not\equiv 0 \pmod{p^{\gamma_\ell(p)}} \ \text{ for every prime } p\le U \Bigr\}.
\]
Further, for any $q\in \N$, define
\[
W_\ell^{q}(U)\ :=\ \Bigl\{ n\in \N : h_\ell'(n)\not\equiv 0 \pmod{p^{\gamma_\ell(p)}} \ \text{ for every prime } p\le U
\text{ with } p^{\gamma_\ell(p)}\mid q \Bigr\}.
\]
When $W_\ell^q(U)$ is used to describe a residue class modulo $q$, we use the
following convention. Since every congruence condition in the definition of
$W_\ell^q(U)$ is modulo a prime power $p^{\gamma_\ell(p)}$ dividing $q$,
membership depends only on the residue class modulo $q$. Thus, for
$b\in\Z$, the notation $b\in W_\ell^q(U)$ means that every
positive integer representative of the residue class $b\bmod q$ lies in
$W_\ell^q(U)$.

Next we have a significant circle method input from Alex Rice's paper \cite[Theorem~2.7]{Ric19}. It has two parts. The first is a bound for a complete sum modulo \(q\) taken over selected congruence classes in $W_{\ell}^q(U)$, and the second is a Weyl estimate for exponential sums over $W_{\ell}(U)$. 
\begin{thmB}[Rice]\label{rice-gauss-minor}
Let $\ell\in\N$ and consider the auxiliary polynomial $h_\ell=b_0+b_1n+\cdots+b_kn^k$ of degree $k\geq 2$. Let $\epsilon_0>0$. 
\begin{enumerate}
\item[\textup{(i)}] (\textbf{Square root cancellation})
If $(a,q)=1$, then
\[
\Biggl|\sum_{\substack{0\le s<q\\ s\in W_\ell^{q}(U)}} e\Big(\frac{ah_\ell(s)}{q}\Big)\Biggr|
\ \ll_{h,\epsilon_0}\
\begin{cases}
q^{1/2+\epsilon_0}, & \text{if } q\le U,\\[4pt]
q^{1-1/k+\epsilon_0}, & \text{if }q\geq U.
\end{cases}
\]

\item[\textup{(ii)}] (\textbf{Weyl-type inequality})
If $N,U,Z\ge 2$, $(a,q)=1$ and $|\theta-a/q|<q^{-2}$, then 
\[
\Biggl|\sum_{\substack{1\le n\le N\\ n\in W_\ell(U)}} e\bigl(\theta h_\ell(n)\bigr)\Biggr|
\ \ll_h\
N(\log U)^{ek}
\Biggl(
e^{-\frac{\log Z}{\log U}}
+\left(b_k\log^{k^2}(b_kqN)\Bigl(q^{-1}+\frac{Z}{N}+\frac{qZ^k}{b_kN^k}\Bigr)\right)^{2^{-k}}
\Biggr).
\]
\end{enumerate}
\end{thmB}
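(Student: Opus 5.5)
The statement just above is Theorem~\ref{rice-gauss-minor}, which the paper imports from \cite[Theorem~2.7]{Ric19}. So the plan is to reproduce Rice's argument in outline, working uniformly in $\ell$ and tracking $b_k$ and the $O_h$ constants through the bound of Remark~\ref{hl bounded}.

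For part \textup{(i)}, I would first use the Chinese Remainder Theorem to factor the complete sum as a product over prime powers $p^t\,\|\,q$. Both the sifting condition defining $W_\ell^q(U)$ and the phase $e(ah_\ell(s)/q)$ split multiplicatively, so it suffices to bound each local sum
\[
\sum_{\substack{s\bmod p^t\\ h_\ell'(s)\not\equiv 0\ (p^{\gamma_\ell(p)})}} e\Big(\frac{a' h_\ell(s)}{p^t}\Big)
\]
by $C_h p^{t/2}$, together with a separate treatment of a bounded set of exceptional primes whose contribution is $O_h(1)$ per prime. The product of the constants $C_h^{\omega(q)}$ is then absorbed into $q^{\epsilon_0}$.

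\begin{itemize}
\item When $t\le \gamma_\ell(p)$, or for small $t$, the trivial bound is acceptable up to constants, since $\gamma_\ell(p)$ is bounded in terms of $h$: the content of $h_\ell'$ is controlled by the content of $h'$ and by the multiplicities $m_p$.
\item When $t>\gamma_\ell(p)$, I would run the standard $p$-adic stationary phase (Hensel lifting) argument. Write $s=u+p^{\lceil t/2\rceil}v$ and sum over $v$. The inner sum vanishes unless $h_\ell'(u)\equiv 0$ to a suitable level. Because the sieve removes exactly the residues with $p^{\gamma_\ell(p)}\mid h_\ell'(s)$, the surviving critical points are nondegenerate, and each contributes $O(p^{t/2})$.
\item If $p>U$, the sieve imposes no condition at $p$. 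In that case I would instead invoke Hua's bound $p^{t(1-1/k)}$ for complete sums. This is what produces the second alternative when $q\ge U$: the factor of $q$ built from primes above $U$ only admits the Hua-type bound.
\end{itemize}

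For part \textup{(ii)}, the approach is to remove the sieve weight with a fundamental-lemma / Brun truncation. Let $P(U)$ be the product of the moduli $p^{\gamma_\ell(p)}$ over $p\le U$. Expand $\1_{W_\ell(U)}(n)$ with Möbius over divisors $d$ built from these moduli, and restrict to $d\le Z$ with truncation error $\ll N(\log U)^{O(k)}e^{-\log Z/\log U}$. This uses the standard Rankin-type tail bound, with $j_\ell(p)\le k-1$ giving sieve dimension at most $k$, which is the source of the $(\log U)^{ek}$ factor.

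For each $d\le Z$ and each admissible residue $r\bmod d$ where $h_\ell'$ vanishes, I would write $n=r+dm$. This gives a polynomial in $m$ of length about $N/d$ with leading coefficient $b_kd^k$. I would then apply Weyl differencing $k-1$ times, using the standard Weyl inequality with a rational approximation of $\theta b_k d^k k!$. The approximation is obtained from $a/q$ by Dirichlet-type transfer, which costs the factors $b_k$, $q$ and $\log^{k^2}(b_kqN)$ appearing in the statement. Summing over $d\le Z$ then yields the terms $q^{-1}+Z/N+qZ^k/(b_kN^k)$ inside the $2^{-k}$ power.

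The main obstacle is part \textup{(i)} at prime powers where $p\mid \ell$, or where $p$ divides the discriminant of $h'$. There the critical points of $h_\ell$ modulo $p$ may be repeated roots, and the needed uniformity in $\ell$ has to come from the structure of $h_\ell$ as $h(r_\ell+\ell x)/\lambda(\ell)$. I would handle this by reducing to the $p$-adic root $z_p$ of $h$ and its multiplicity $m_p$, exactly as in Rice's Section~6. I expect this to show that, after sifting, only $O_h(1)$ residue classes are degenerate and that each of them contributes at most $O_h(p^{t/2})$.
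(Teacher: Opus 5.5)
The paper does not reprove this statement. It imports \cite[Theorem~2.7]{Ric19} and uses only Lucier's content bound (Lemma~\ref{lem:rice-content-bound}) and $\omega(q)\ll\log q/\log\log q$ to put Rice's $C^{\omega(q)}$ and content-dependent constants into the stated form. Your reconstruction of Rice's argument has a genuine gap in part \textup{(i)}, and part \textup{(ii)} is not justified as written.

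\textbf{Part (i): the prime-modulus case needs Weil's bound.} Your first bullet treats $t\le\gamma_\ell(p)$ trivially ``since $\gamma_\ell(p)$ is bounded in terms of $h$.'' What the content bound actually gives is $\prod_{\gamma_\ell(p)>1}p^{\gamma_\ell(p)-1}\ll_h1$. So $p^{\gamma_\ell(p)}$ is bounded only at the $O_h(1)$ primes with $\gamma_\ell(p)\ge2$. At every other prime, $\gamma_\ell(p)=1$ and $p$ can be as large as $U$. For $p\,\|\,q$ the trivial bound is then $p$, not $O(p^{1/2})$, so for squarefree $q\le U$ your argument gives no cancellation at all. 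Your other two tools do not cover this case: the stationary-phase splitting is vacuous at $t=1$, and Hua's bound gives only $p^{1-1/k}$. The missing input is Weil's bound. For $p>k$ with $\gamma_\ell(p)=1$, the reduction of $h_\ell$ mod $p$ is nonconstant of degree less than $p$, so $\bigl|\sum_{s\bmod p}e(ah_\ell(s)/p)\bigr|\le(k-1)p^{1/2}$. Deleting the at most $k-1$ sifted residues costs $O(k)$. This is exactly where the square-root cancellation comes from.

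Conversely, the case $t\ge2\gamma$, with $\gamma=\gamma_\ell(p)$, is simpler than you describe. Writing $s=u+p^{t-\gamma}v$ gives $h_\ell(s)\equiv h_\ell(u)+p^{t-\gamma}vh_\ell'(u)\pmod{p^t}$, and the sum over $v\bmod p^{\gamma}$ vanishes whenever $p^{\gamma}\nmid h_\ell'(u)$. So the sifted local sum is exactly $0$, and no critical points survive. The repeated-root issue you flag as the main obstacle never arises. The range $\gamma<t<2\gamma$ occurs only at the $O_h(1)$ primes with $\gamma\ge2$, where the trivial bound is fine.

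\textbf{Part (ii): the transfer step loses a power of the modulus.} After writing $n=r+R_dm$ (the modulus is $R_d=\prod_{p\mid d}p^{\gamma_\ell(p)}$, not $d$), the phase in $m$ has leading coefficient $\theta b_kR_d^k$. The approximation inherited from $|\theta-a/q|<q^{-2}$ has error up to $b_kR_d^k/q^2$. A direct transfer therefore replaces the term $b_k/q$ by roughly $b_kR_d^k/q$, which can be as large as about $b_kZ^k/q$, whereas the statement has no $Z$-dependence in that term. So the transfer does not cost only $b_k$, $q$ and logarithms, as you assert. In addition, the standard Weyl inequality loses $N^{\epsilon}$, not $\log^{k^2}(b_kqN)$. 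The logarithmic loss, together with the exponent $2^{-k}$, typically comes from an extra Cauchy--Schwarz against the divisor function, which your outline does not contain. As written, this step is an assertion rather than an argument. You would need to supply it, or cite Rice as the paper does.
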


\begin{rem}
Here $e=\exp(1)$. Our version of Theorem \ref{rice-gauss-minor} follows immediately from the estimates in \cite{Ric19} using Lucier's content bound $\operatorname{cont}(h_\ell)\ll_h 1$ and the standard divisor bound $\omega(q)\ll \frac{\log q}{\log\log q}$. We also used different letters for the variables so that our $U$ is his $Y$ and our $N$ is his $X$. 
\end{rem}

We now record three useful facts from Rice \cite{Ric19}. The first is a bound for the \emph{content} of an auxiliary polynomial. This is \cite[Lemma~2.6]{Ric19}.
\begin{lem}[Lucier's content bound]
\label{lem:rice-content-bound}
Let $h\in\Z[x]$ be intersective of degree $k\ge 2$, and let $h_\ell$ be the
auxiliary polynomial
\[
h_\ell(n)=\frac{h(r_\ell+\ell n)}{\lambda(\ell)}=b_kn^k+b_{k-1}n^{k-1}+\cdots+b_1n+b_0.
\]
Defining
\[
\operatorname{cont}(h_\ell):=\gcd(b_1,\dots,b_k),
\]
we have
\[
\operatorname{cont}(h_\ell)\ll_h 1
\]
uniformly in $\ell$.
\end{lem}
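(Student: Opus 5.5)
The plan is to bound $v_p(\operatorname{cont}(h_\ell))$ separately for each prime $p$. We show that this valuation is at most a quantity $e_p$ depending only on $h$ and $p$ (not on $\ell$), and that $e_p=0$ for all but finitely many $p$. Then $\operatorname{cont}(h_\ell)\le\prod_p p^{e_p}\ll_h 1$.

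\emph{Step 1: local factorisation.} Fix $p$, write $v:=v_p(\ell)$ and $\ell=p^v\ell'$ with $p\nmid\ell'$. Over $\mathbb{Z}_p$, factor
\[
h(x)=(x-z_p)^{m_p}g(x),\qquad g\in\mathbb{Z}_p[x],\ g(z_p)\neq0,
\]
which is possible since $z_p$ is a root of multiplicity $m_p$ and $\mathbb{Z}_p$ is integrally closed with $h$ having integer coefficients. By the definition of $r_\ell$ we have $r_\ell=z_p+p^v t$ for some $t\in\mathbb{Z}_p$ (when $v=0$ take $t=(r_\ell-z_p)$). Then
\[
h(r_\ell+\ell n)=p^{vm_p}(t+\ell' n)^{m_p}\,g\big(z_p+p^v(t+\ell' n)\big).
\]
Since $\lambda(\ell)=p^{vm_p}\cdot u$ with $u$ a $p$-adic unit, this gives $h_\ell(n)=u^{-1}H(t+\ell' n)$, where
\[
H(y):=y^{m_p}g(z_p+p^v y)\in\mathbb{Z}_p[y].
\]

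\emph{Step 2: invariance of the content under affine substitution.} For $P\in\mathbb{Z}_p[y]$, let $c(P)$ be the minimal valuation of the nonconstant coefficients of $P$. Since $P(t+\ell'n)-P(t)=\sum_j a_j\big((t+\ell'n)^j-t^j\big)$, every nonconstant coefficient of $P(t+\ell'n)$ lies in the ideal generated by $a_1,\dots,a_k$, so $c(P(t+\ell' n))\ge c(P)$. Applying the same argument to the inverse substitution $y\mapsto \ell'^{-1}(y-t)$, which is legitimate because $\ell'$ is a unit, gives the reverse inequality. Hence
\[
v_p(\operatorname{cont}(h_\ell))=c(H).
\]
Now $m_p\ge1$ when $h$ has a root (and if $m_p=0$ there is nothing special to handle), and the coefficient of $y^{m_p}$ in $H$ is exactly $g(z_p)$. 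Therefore $c(H)\le e_p:=v_p(g(z_p))<\infty$, uniformly in $\ell$.

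\emph{Step 3: $e_p=0$ for almost all $p$.} This is the main point needing care. Factor $h=a\prod_i f_i^{n_i}$ into distinct irreducibles over $\mathbb{Z}$. Let $p$ avoid the finitely many primes dividing $a$, the discriminants of the $f_i$, and the pairwise resultants $\operatorname{Res}(f_i,f_j)$ for $i\ne j$. Then $z_p$ is a root of exactly one $f_i$, and it is a simple root of $\overline{f_i}$ modulo $p$. It follows that
\[
g(z_p)=a\Big(\frac{f_i(x)}{x-z_p}\Big)^{n_i}\Big|_{x=z_p}\prod_{j\ne i}f_j(z_p)^{n_j}
=a\,f_i'(z_p)^{n_i}\prod_{j\ne i}f_j(z_p)^{n_j},
\]
and every factor on the right is a $p$-adic unit, so $e_p=0$.

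Combining the three steps, $\operatorname{cont}(h_\ell)\le\prod_{p\in\mathcal{P}_h}p^{e_p}$, where $\mathcal{P}_h$ is the finite exceptional set of primes from Step 3. This bound depends only on $h$ and the fixed choice of roots $z_p$, which proves the lemma.
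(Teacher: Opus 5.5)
Your proof is correct, but it does not follow the paper: the paper gives no proof of this lemma and simply imports it from Rice \cite{Ric19} (Lemma~2.6), who credits it to Lucier. Your argument is a self-contained local proof, and its mechanism is sound:
\begin{itemize}
\item factor $h=(x-z_p)^{m_p}g$ over $\mathbb Z_p$;
\item write $h_\ell$, up to a $p$-adic unit, as $H(y)=y^{m_p}g(z_p+p^vy)$ composed with an affine map of unit slope;
\item note that such a substitution preserves the minimal valuation of the nonconstant coefficients (only the direction $c(H(t+\ell'n))\le c(H)$ is needed, and that is where $p\nmid\ell'$ enters);
\item read off $g(z_p)$ as the coefficient of $y^{m_p}$ in $H$.
\end{itemize}
Compared with the citation, your route gives an explicit constant $\prod_{p\in\mathcal P_h}p^{v_p(g(z_p))}$. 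It also yields, as a by-product, the fact that $\lambda(\ell)$ divides every coefficient of $h(r_\ell+\ell x)$, which the paper likewise only cites: $H\in\mathbb Z_p[y]$ for every $p$.

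A few points need tidying, none of them a real gap.
\begin{itemize}
\item $g\in\mathbb Z_p[x]$ follows directly from dividing by the monic polynomial $x-z_p$; integral closure plays no role.
\item Since $h(z_p)=0$, you always have $m_p\ge 1$, so drop the parenthetical about $m_p=0$. It would actually be a problem if it occurred: with $m_p=0$, $g(z_p)$ would be the constant term of $H$, and Step~2 would give nothing.
\item In Step~3, the claim that $z_p$ is a simple root of $\overline{f_i}$ needs a word when $p$ divides the leading coefficient of $f_i$, because then $\overline{f_i}$ drops degree. The claim is still true, but the cleanest fix is to exclude the primes dividing $\operatorname{Res}(f_i,f_i')$ and argue exactly as you do for $f_j(z_p)$. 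From $Af_i+Bf_i'=\operatorname{Res}(f_i,f_i')$ with $A,B\in\mathbb Z[x]$, you get $B(z_p)f_i'(z_p)=\operatorname{Res}(f_i,f_i')\in\mathbb Z_p^\times$. Hence $f_i'(z_p)$ is a unit, so $z_p$ is a simple root of $f_i$ and $m_p=n_i$.
\item If you want the constant to be independent of the choice of the roots $z_p$, take the maximum of $v_p(g(z))$ over the at most $k$ roots $z\in\mathbb Z_p$ of $h$, for each $p\in\mathcal P_h$.
\end{itemize}
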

We say that a polynomial $f\in\mathbb Z[x]$ is identically zero modulo $M$
if $f(n)\equiv 0\pmod M$ for every $n\in\mathbb Z$, or equivalently, if $f/M$ is integer-valued. The following is 
\cite[Proposition~2.5]{Ric19}. 
\begin{lem}
\label{lem:rice-identically-zero-bound}
Let
\[
f(x)=a_0+a_1x+\cdots+a_kx^k\in\Z[x].
\]
If $f$ is identically zero modulo $q$, then
\[
q\mid k!\gcd(a_0,a_1,\dots,a_k).
\]
Consequently, if $f'$ is identically zero modulo $q$, then
\[
q\mid (k-1)!\gcd(a_1,2a_2,\dots,ka_k).
\]
\end{lem}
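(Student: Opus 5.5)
The plan is to expand $f$ in the binomial-coefficient basis and use the fact that $k!\binom{x}{j}$ has integer coefficients for $j\le k$.

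\emph{Step 1 (Newton interpolation).} Define the forward differences $c_j:=(\Delta^j f)(0)$ for $0\le j\le k$, where $(\Delta g)(x)=g(x+1)-g(x)$. By Newton's forward difference formula,
\[
f(x)=\sum_{j=0}^{k}c_j\binom{x}{j}
\]
as polynomials. Indeed, both sides have degree at most $k$, and they agree at $x=0,1,\dots,k$.

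\emph{Step 2 (divisibility of the $c_j$).} Each difference can be written explicitly as
\[
c_j=\sum_{i=0}^{j}(-1)^{j-i}\binom{j}{i}f(i).
\]
This is an integer combination of values of $f$. By hypothesis $q\mid f(i)$ for every $i$, so $q\mid c_j$ for all $j$.

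\emph{Step 3 (compare coefficients).} For $0\le j\le k$ the polynomial $k!\binom{x}{j}=\frac{k!}{j!}\,x(x-1)\cdots(x-j+1)$ lies in $\Z[x]$. Write $k!\binom{x}{j}=\sum_{i}e_{ij}x^i$ with $e_{ij}\in\Z$. Multiplying the identity of Step 1 by $k!$ and comparing coefficients of $x^i$ gives
\[
k!\,a_i=\sum_j c_j e_{ij}.
\]
Every term on the right is divisible by $q$, so $q\mid k!\,a_i$ for each $i$. Hence
\[
q\mid\gcd(k!a_0,\dots,k!a_k)=k!\gcd(a_0,\dots,a_k).
\]

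\emph{Step 4 (the consequence for $f'$).} We have $f'(x)=a_1+2a_2x+\cdots+ka_kx^{k-1}\in\Z[x]$, which has degree at most $k-1$ and coefficients $a_1,2a_2,\dots,ka_k$. Applying Steps 1--3 to $f'$ with $k-1$ in place of $k$ yields
\[
q\mid (k-1)!\gcd(a_1,2a_2,\dots,ka_k).
\]
If $k=1$, then $f'$ is the constant $a_1$ and the claim is immediate.

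There is no real obstacle here. The only point needing care is that Step 3 uses $j\le k$, so that $j!\mid k!$; this is guaranteed because the interpolation in Step 1 uses only $j\le\deg f\le k$.
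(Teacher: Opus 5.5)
Your proof is correct and complete. Newton's forward-difference expansion gives $f=\sum_{j\le k}c_j\binom{x}{j}$ with $q\mid c_j$, so $k!\,f/q\in\Z[x]$, and the statement for $f'$ follows by running the same argument in degree $k-1$. The paper gives no argument of its own: it imports the lemma from Rice's Proposition~2.5, remarking only that ``identically zero modulo $q$'' means $f/q$ is integer-valued. Your binomial-basis route is the standard proof of that fact.
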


The following is \cite[equation (13)]{Ric19}. 
\begin{lem}\label{J_USmall}
If $h\in \Z[x]$ is intersective with $\deg h=k$, then for all $\ell\in \N$ and all $U\ge 2$,
\[
\prod_{p\le U}\Bigl(1-\frac{j_\ell(p)}{p^{\gamma_\ell(p)}}\Bigr)
\ \gg_h\
\prod_{k\le p\le U}\Bigl(1-\frac{k-1}{p}\Bigr)
\ \gg_k\ (\log U)^{1-k}.
\]
\end{lem}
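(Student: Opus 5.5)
The plan is to compare the two products prime by prime. Primes below a threshold $C_h$ depending only on $h$ contribute a bounded positive factor. Primes above $C_h$ satisfy $\gamma_\ell(p)=1$ and $j_\ell(p)\le k-1$. The second inequality is then Mertens' theorem.

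\emph{Step 1: $\gamma_\ell(p)$ is bounded uniformly in $\ell$.} Write $h_\ell=b_0+\dots+b_kx^k$. By definition $h_\ell'$ is identically zero modulo $p^{\gamma_\ell(p)-1}$. Lemma~\ref{lem:rice-identically-zero-bound} then gives $p^{\gamma_\ell(p)-1}\mid (k-1)!\gcd(b_1,2b_2,\dots,kb_k)$. Since $\gcd(ib_i)$ divides $k!\,b_i$ for every $i$, it divides $k!\operatorname{cont}(h_\ell)$. Lemma~\ref{lem:rice-content-bound} gives $\operatorname{cont}(h_\ell)\ll_h 1$, so
\[
p^{\gamma_\ell(p)-1}\mid (k-1)!\,k!\,\operatorname{cont}(h_\ell)\le M_h,
\]
where $M_h$ depends only on $h$. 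Set $C_h:=\max(k,M_h)+1$. Then $\gamma_\ell(p)=1$ for every $p\ge C_h$, and $p^{\gamma_\ell(p)}\le pM_h$ for every $p$.

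\emph{Step 2: factor-wise bounds.} For $p\ge C_h$ we have $\gamma_\ell(p)=1$, so $h_\ell'$ is not identically zero as a function modulo $p$. Because $p>k-1$, its reduction is then a nonzero polynomial over $\mathbb F_p$ of degree at most $k-1$. Hence $j_\ell(p)\le k-1$ and
\[
1-\frac{j_\ell(p)}{p}\ \ge\ 1-\frac{k-1}{p}.
\]
For $p<C_h$, $h_\ell'$ is not identically zero modulo $p^{\gamma_\ell(p)}$, so some residue class is not a root. This gives $j_\ell(p)\le p^{\gamma_\ell(p)}-1$, and therefore
\[
1-\frac{j_\ell(p)}{p^{\gamma_\ell(p)}}\ \ge\ p^{-\gamma_\ell(p)}\ \ge\ (pM_h)^{-1}.
\]
The product of these factors over $p<C_h$ is $\gg_h 1$. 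On the right-hand side, the factors with $k\le p<C_h$ are at most $1$, and primes $p<k$ do not appear there at all. Multiplying the two ranges gives the first inequality.

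\emph{Step 3: the Mertens bound.} For $p\ge k$ we have $1-(k-1)/p\ge 1/k>0$, so $\log(1-x)\ge -x-C_kx^2$ on the relevant range $x\le (k-1)/k$. Hence
\[
\prod_{k\le p\le U}\Bigl(1-\frac{k-1}{p}\Bigr)\ \ge\ \exp\Bigl(-(k-1)\sum_{p\le U}\frac1p-C_k\sum_p\frac{(k-1)^2}{p^2}\Bigr)\ \gg_k\ (\log U)^{1-k},
\]
using Mertens' estimate $\sum_{p\le U}1/p=\log\log U+O(1)$.

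The main obstacle is Step 1, namely making $\gamma_\ell(p)$ and the bad primes uniform in $\ell$. Lucier's content bound together with Lemma~\ref{lem:rice-identically-zero-bound} is exactly what is needed there; everything else is routine.
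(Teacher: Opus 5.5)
Your proof is correct and follows essentially the paper's route. The paper imports this bound from Rice without proof, but its own proof of Lemma~\ref{complete-denominator-bound} and the sieve-dimension check in Appendix~\ref{sievecalc} use the same ingredients: Lemma~\ref{lem:rice-identically-zero-bound} together with Lucier's content bound shows that the primes with $\gamma_\ell(p)>1$ (and the primes $p\le k$) contribute only an $O_h(1)$ factor, the remaining primes satisfy $j_\ell(p)\le k-1$, and Mertens' theorem finishes the argument.
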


\section{Complete moduli and the sieve estimate}\label{completemoduli}
We now introduce the notion of a complete modulus. We say that an integer $q$ is $(\ell,U)$-complete if
\[
p\le U,\ p\mid q
\qquad\Longrightarrow\qquad
p^{\gamma_\ell(p)}\mid q.
\]
Equivalently, for every prime $p\le U$, either $p\nmid q$ or
$p^{\gamma_\ell(p)}\mid q$. For $q\in\N$, let $q^\sharp_{\ell,U}$
denote the smallest $(\ell,U)$-complete multiple of $q$. Explicitly,
\[
q^\sharp_{\ell,U}
=
q\prod_{\substack{p\le U\\0<v_p(q)<\gamma_\ell(p)}}
p^{\gamma_\ell(p)-v_p(q)}.
\]

We prove the following, which says that $q$ and $q^\sharp_{\ell,U}$ are comparable, and in particular $q\asymp_h q^\sharp_{\ell,U}$. 
\begin{lem}\label{complete-denominator-bound}
For every $q\in\N$, one has $q\asymp_h q^\sharp_{\ell,U}$. 
\end{lem}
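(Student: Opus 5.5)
The lower bound $q\le q^\sharp_{\ell,U}$ is immediate from the explicit formula, since $q^\sharp_{\ell,U}$ is a multiple of $q$. For the upper bound, the plan is to show that the correction factor
\[
\frac{q^\sharp_{\ell,U}}{q}=\prod_{\substack{p\le U\\0<v_p(q)<\gamma_\ell(p)}}p^{\gamma_\ell(p)-v_p(q)}
\]
is bounded by a constant depending only on $h$, uniformly in $\ell$, $U$ and $q$. Each factor is at most $p^{\gamma_\ell(p)-1}$, and it is nontrivial only when $\gamma_\ell(p)\ge 2$. So it suffices to show that
\[
\prod_{p:\ \gamma_\ell(p)\ge 2}p^{\gamma_\ell(p)-1}\ll_h 1 .
\]

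To bound $\gamma_\ell(p)$, I would use Lemma~\ref{lem:rice-identically-zero-bound}. By the definition of $\gamma_\ell(p)$, the derivative $h_\ell'$ is identically zero modulo $p^{\gamma_\ell(p)-1}$. Applying the lemma to $f=h_\ell$ with coefficients $b_0,\dots,b_k$ then gives
\[
p^{\gamma_\ell(p)-1}\ \Big|\ (k-1)!\gcd(b_1,2b_2,\dots,kb_k).
\]
Next I would control this gcd. Since $\gcd(b_1,2b_2,\dots,kb_k)$ divides $k!\gcd(b_1,\dots,b_k)=k!\operatorname{cont}(h_\ell)$, Lucier's content bound (Lemma~\ref{lem:rice-content-bound}) gives $\gcd(b_1,2b_2,\dots,kb_k)\ll_h 1$. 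Hence every prime power $p^{\gamma_\ell(p)-1}$ divides a single integer
\[
M_\ell:=(k-1)!\,k!\,\operatorname{cont}(h_\ell),
\]
which is bounded in terms of $h$ alone.

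Finally, since the $p^{\gamma_\ell(p)-1}$ are powers of distinct primes and each divides $M_\ell$, their product also divides $M_\ell$. Therefore $q^\sharp_{\ell,U}/q\le M_\ell\ll_h 1$, which gives $q\asymp_h q^\sharp_{\ell,U}$.

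The only delicate point is checking that $h_\ell'$ being identically zero modulo $p^{\gamma-1}$ is exactly what the definition of $\gamma_\ell(p)$ as a minimal exponent provides. In the case $\gamma_\ell(p)=1$ there is no contribution, since then $v_p(q)<\gamma_\ell(p)$ forces $v_p(q)=0$. Uniformity in $\ell$ comes entirely from the content bound.
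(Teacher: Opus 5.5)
Your proposal is correct and follows essentially the same route as the paper: bound the correction factor $q^\sharp_{\ell,U}/q$ by $\prod_{\gamma_\ell(p)>1}p^{\gamma_\ell(p)-1}$, control each $p^{\gamma_\ell(p)-1}$ via Lemma~\ref{lem:rice-identically-zero-bound} applied to $h_\ell'$, and finish with $\gcd(b_1,2b_2,\dots,kb_k)\mid k!\operatorname{cont}(h_\ell)$ and Lucier's content bound. Your observation that the pairwise coprime prime powers $p^{\gamma_\ell(p)-1}$ all divide the single integer $(k-1)!\,k!\,\operatorname{cont}(h_\ell)$ spells out the step the paper leaves implicit when it bounds their product.
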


\begin{proof}
 $q\mid q^\sharp_{\ell,U}$  from the definition of $q^\sharp_{\ell,U}$, hence it is immediate that $q\leq q^\sharp_{\ell,U}$.
It remains to prove that $q^\sharp_{\ell,U}/q$ is bounded in terms of $h$.

Write
\[
h_\ell(n)=b_kn^k+b_{k-1}n^{k-1}+\cdots+b_1n+b_0.
\]
If $\gamma_\ell(p)>1$, then by
definition $h_\ell'$ is identically zero modulo $p^{\gamma_\ell(p)-1}$ in the
sense that
\[
h_\ell'(n)\equiv 0\pmod {p^{\gamma_\ell(p)-1}}
\qquad\text{for every }n\in\mathbb Z.
\]
By Lemma \ref{lem:rice-identically-zero-bound} applied to the polynomial $h_\ell'$ of
degree at most $k-1$, this implies 
\[p^{\gamma_\ell(p)-1}\mid (k-1)!\gcd(b_1,2b_2,\dots,kb_k)\qquad  \text{ for every prime $p$ with }\gamma_{\ell}(p)>1.\]
Therefore
\[
\prod_{\gamma_\ell(p)>1}p^{\gamma_\ell(p)-1}
\le (k-1)!\gcd(b_1,2b_2,\dots,kb_k).
\]
On the other hand, we have 
\[
\gcd(b_1,2b_2,\dots,kb_k)
\le k!\gcd(b_1,\dots,b_k)
= k!\operatorname{cont}(h_\ell).
\]
By the standard content bound for the auxiliary polynomials (Lemma \ref{lem:rice-content-bound}),
we have $\operatorname{cont}(h_\ell)\ll_h1.$
Hence
\begin{equation}\label{completersmall}
\prod_{\gamma_\ell(p)>1}p^{\gamma_\ell(p)-1}\ll_h1.
\end{equation}
Finally,
\[
\frac{q^\sharp_{\ell,U}}{q}
=
\prod_{\substack{p\le U\\0<v_p(q)<\gamma_\ell(p)}}
p^{\gamma_\ell(p)-v_p(q)}
\le
\prod_{\gamma_\ell(p)>1}p^{\gamma_\ell(p)-1}
\ll_h1.
\]
Thus $q^\sharp_{\ell,U}\ll_h q$.
\end{proof}

The following is the sieve estimate over short intervals. We prove this in the appendix by applying the fundamental lemma of sieve theory.
\begin{prop}\label{ricebrunsum2}
There are constants $c,c'>0$, depending only on $h$, such that the following holds. Let $q,\ell\in\N$, let $b\in\Z$, let $U\ge3$, and let $v\ge1$ and $H>0$. Suppose that $q$ is $(\ell,U)$-complete and that
\begin{equation}\label{eq:brun-short-range}
\log(H/q)\ge c\log U\log\log U.
\end{equation}
If $b\notin W_\ell^q(U)$, then
\[
\sum_{\substack{v<n\le t\\n\in W_\ell(U)\\n\equiv b\bmod q}}h_\ell'(n)=0
\]
for every $v\le t\le v+H$. If $b\in W_\ell^q(U)$, then, uniformly for $v\le t\le v+H$,
\begin{equation}\label{eq:ricebrun-short-derivative}
\sum_{\substack{v<n\le t\\n\in W_\ell(U)\\n\equiv b\bmod q}}h_\ell'(n) 
=
\frac{h_\ell(t)-h_\ell(v)}{q}
\prod_{\substack{p\le U\\p^{\gamma_\ell(p)}\nmid q}}
\left(1-\frac{j_\ell(p)}{p^{\gamma_\ell(p)}}\right)
+O_h\left(
\frac{H}{q}h_\ell'(v+H)
e^{-c'\frac{\log(H/q)}{\log U}}
\right).
\end{equation}
In particular, if $v+H\le Y$ and $h_\ell(Y)=X$, then the error term in \eqref{eq:ricebrun-short-derivative} is
\[
O_h\left(
\frac{XH}{qY}
e^{-c'\frac{\log(H/q)}{\log U}}
\right).
\]
\end{prop}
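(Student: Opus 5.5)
The plan is to prove first a counting version of the sieve estimate and then pass to the $h_\ell'$-weighted sums by partial summation. The case $b\notin W_\ell^q(U)$ is immediate. By the convention after the definition of $W_\ell^q(U)$, there is then a prime $p\le U$ with $p^{\gamma_\ell(p)}\mid q$ such that $h_\ell'(b)\equiv 0\pmod{p^{\gamma_\ell(p)}}$. Every $n\equiv b\bmod q$ inherits this congruence, so no such $n$ lies in $W_\ell(U)$, and the sum is empty.

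Now assume $b\in W_\ell^q(U)$, and let $\mathcal P$ be the set of primes $p\le U$ with $p^{\gamma_\ell(p)}\nmid q$. Since $q$ is $(\ell,U)$-complete, every such $p$ satisfies $p\nmid q$. Hence, for $n\equiv b\bmod q$, the condition $n\in W_\ell(U)$ is equivalent to avoiding the $j_\ell(p)$ root classes of $h_\ell'$ modulo $p^{\gamma_\ell(p)}$, for each $p\in\mathcal P$. This is the point where completeness enters: it makes the moduli $p^{\gamma_\ell(p)}$, $p\in\mathcal P$, coprime to $q$. Writing $n=b+qm$, the root classes for $n$ become root classes for $m$ modulo $p^{\gamma_\ell(p)}$, with the same count $j_\ell(p)$.

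For the counting step, I would sift an interval of $m$ of length $H'\asymp H/q$ by the set $\mathcal P$. The sieve dimension is at most $k-1$, because $j_\ell(p)\le k-1$ when $\gamma_\ell(p)=1$. The primes with $\gamma_\ell(p)>1$ are $O_h(1)$ in number by \eqref{completersmall}, so they contribute only a bounded factor and can be handled by direct CRT splitting. The fundamental lemma of sieve theory, in the form in Tao's notes, with level $D=(H/q)^{1/2}$ and sifting parameter $s=\log D/\log U$, then gives
\[
\#\{v<n\le t:\ n\in W_\ell(U),\ n\equiv b\bmod q\}=\frac{t-v}{q}\prod_{p\in\mathcal P}\Bigl(1-\frac{j_\ell(p)}{p^{\gamma_\ell(p)}}\Bigr)\bigl(1+O(e^{-s})\bigr)+O\Bigl(\sum_{d\le D}\tau_{k}(d)\Bigr),
\]
uniformly for subintervals of length at most $H$. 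The main term is $\asymp$ $(t-v)/q$ times the product.

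The main obstacle is making this error proportional to $H/q$ with the factor $e^{-c'\log(H/q)/\log U}$, including for short subintervals $[v,t]$. I would handle it in three parts.
\begin{itemize}
\item The remainder term is $\ll D^{1+o(1)}\le (H/q)^{3/4}$. By \eqref{eq:brun-short-range} with $c$ large, this is $\le (H/q)e^{-c'\log(H/q)/\log U}$.
\item The relative error $e^{-s}$ is of the required shape because $s\gg\log(H/q)/\log U$.
\item The product is bounded below by Lemma \ref{J_USmall}, giving a lower bound $(\log U)^{1-k}$. Since $(\log U)^{k}\le e^{\frac{c'}{2}\log(H/q)/\log U}$ under \eqref{eq:brun-short-range}, this loss is absorbed. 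In fact, the product is at most $1$, so an additive error is all that is needed.
\end{itemize}
For $t-v$ small, the trivial bound $(t-v)/q+1$ is already within the error, so uniformity over all $t\in[v,v+H]$ holds.

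Finally, let $N(t)$ denote the count above, with main term $M(t)=\kappa(t-v)/q$ and error $E(t)\ll (H/q)e^{-c'\log(H/q)/\log U}$. Stieltjes integration gives
\[
\sum h_\ell'(n)=\int_v^t h_\ell'(u)\,dN(u)=\frac{\kappa}{q}\bigl(h_\ell(t)-h_\ell(v)\bigr)+h_\ell'(t)E(t)-\int_v^t E(u)h_\ell''(u)\,du.
\]
By \eqref{positivity2}, $h_\ell'$ is increasing, so the last two terms are $\ll \sup|E|\cdot h_\ell'(v+H)$, which is \eqref{eq:ricebrun-short-derivative}. For the last claim, $h_\ell$ is convex with $h_\ell(0)$ bounded relative to $X$, so $h_\ell'(v+H)\le h_\ell'(Y)\ll_h k\,h_\ell(Y)/Y=kX/Y$, using that $h_\ell$ is a degree-$k$ polynomial with positive derivatives.
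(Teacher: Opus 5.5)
Your proposal is correct and follows the paper's proof essentially step by step. It uses the same substitution $n=b+qm$, with completeness making the sifting moduli coprime to $q$, and the same fundamental lemma with $D=(H/q)^{1/2}$ and $s\asymp\log(H/q)/\log U$. It also absorbs the relative error using $V\le 1$ and finishes with the same Stieltjes partial summation. The only difference is that the paper keeps the primes with $\gamma_\ell(p)>1$ inside the sieve with an $O_h(1)$ dimension constant, rather than splitting them off by CRT.

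One small fix is needed in the last step. Convexity only bounds $h_\ell'(Y)$ from below, so it cannot give the upper bound you need. Instead cite \eqref{eq:hl' bound}: Markov's inequality on $[1,Y]$, where $0<h_\ell\le X$, gives $h_\ell'(Y)\ll_k X/(Y-1)\ll X/Y$.
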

\begin{rem}
The completeness hypothesis in Proposition \ref{ricebrunsum2} cannot simply be omitted. Indeed, take $h_\ell(n)=n^2$ and $q=2$.
Then $h_\ell'(n)=2n$, so
\[
\gamma_\ell(2)=2,\qquad j_\ell(2)=2,
\]
while for every odd prime $p$ one has
\[
\gamma_\ell(p)=1,\qquad j_\ell(p)=1.
\]
Thus, for every $U\ge 3$, the modulus $q=2$ is not $(\ell,U)$-complete, since
$2\mid q$ but $4\nmid q$. Moreover, in this special case,
\[
W_\ell(U)
=
\{n\in\N:p\nmid n\text{ for every prime }p\le U\},
\]
whereas $W_\ell^2(U)=\N,$
because there is no prime $p\le U$ for which $p^{\gamma_\ell(p)}\mid 2$.
Taking $b=2$ and $v=1$, we therefore have $b\in W_\ell^q(U)$, but
\[
\sum_{\substack{1<n\le t\\ n\in W_\ell(U)\\ n\equiv 2\bmod 2}}2n=0
\]
for every $t\ge1$, since every integer congruent to $2\pmod 2$ is even and thus not in $W_\ell(U)$.\\
On the other hand, if we use Proposition \ref{ricebrunsum2} to estimate this sum, the main term in the estimate would be
\[
\frac{t^2-1}{2}
\left(1-\frac{2}{4}\right)
\prod_{\substack{3\le p\le U\\ p\ \mathrm{prime}}}
\left(1-\frac1p\right)
=
\frac{t^2-1}{4}
\prod_{\substack{3\le p\le U\\ p\ \mathrm{prime}}}
\left(1-\frac1p\right),
\]
which is positive for $t>1$. Taking $t=Y$ sufficiently large relative to
$U$ makes the error term smaller than this main term, so the asserted
asymptotic does not necessarily hold without the completeness assumption. 
\end{rem}

When applying the complete sum bound while performing the major arc estimate, we  ``complete $q$" by multiplying both the numerator and denominator of $a/q$ by $q^\sharp_{\ell,U}/q$, and in this way we are able to split the sum into residue classes modulo $q^\sharp_{\ell,U}$. This modulus is complete, so we can apply Proposition \ref{ricebrunsum2}. For this reason, we require the following complete sum estimate. 
\begin{lem}[Complete sums over $(\ell,U)$-complete moduli]\label{bounded-completion-gauss}
Let $\epsilon_0>0$. Let $q\in\N$, let $(a,q)=1$, and let $Q=q^\sharp_{\ell,U}$.
Put $A:=a\frac{Q}{q}.$
Then
\begin{equation}\label{eq:complete-sum-target}
   S:= \left|
\sum_{\substack{s\bmod Q\\s\in W_\ell^{Q}(U)}}
e\left(\frac{Ah_\ell(s)}{Q}\right)
\right|
\ll_{h,\epsilon_0}
\begin{cases}
Q^{1/2+\epsilon_0}, & Q\le U,\\[4pt]
Q^{1-1/k+\epsilon_0}, & Q > U.
\end{cases}
\end{equation}

\end{lem}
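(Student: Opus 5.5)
The plan is to reduce $S$ to a product of local complete sums via the Chinese Remainder Theorem, and then compare each local factor with the corresponding factor of Rice's sum in Theorem~\ref{rice-gauss-minor}(i).

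\textbf{Factorization.} Write $Q=\prod_p p^{e_p}$ with $e_p=v_p(Q)$. The condition $s\in W_\ell^{Q}(U)$ is a product of local conditions: for each $p\le U$ with $p^{\gamma_\ell(p)}\mid Q$, we require $h_\ell'(s)\not\equiv0 \pmod{p^{\gamma_\ell(p)}}$. Since $Q$ is $(\ell,U)$-complete, this condition is present for every prime $p\le U$ dividing $Q$. Standard CRT multiplicativity of complete sums then gives
\[
S=\prod_{p^{e_p}\| Q}\Bigl|\sum_{\substack{s \bmod p^{e_p}\\ s \text{ admissible at } p}} e\Bigl(\frac{A_p h_\ell(s)}{p^{e_p}}\Bigr)\Bigr|,
\]
where $A_p$ is a unit multiple of $A\,(Q/p^{e_p})^{-1}$ modulo $p^{e_p}$. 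Note that $A/Q=a/q$, so the numerator is generally not coprime to $Q$. At primes with $v_p(q)=e_p$, the reduced fraction has exact denominator $p^{e_p}$. At the exceptional primes, where $0<v_p(q)<\gamma_\ell(p)$ and completion raised the exponent, the local sum is a sum modulo $p^{e_p}$ of a function that is actually periodic modulo $p^{v_p(q)}$, apart from the sieve condition modulo $p^{\gamma_\ell(p)}$.

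\textbf{Comparison.} For the non-exceptional primes, the local factor coincides with the local factor of Rice's sum for the pair $(a,q)$ with modulus $q$. These are the factors Rice bounds in proving Theorem~\ref{rice-gauss-minor}(i), giving $p^{e_p/2+\epsilon}$-type bounds in the small range and $p^{e_p(1-1/k)+\epsilon}$ in general. For the exceptional primes, I would bound the local factor trivially by $p^{e_p}$. By \eqref{completersmall} in the proof of Lemma~\ref{complete-denominator-bound}, the product of all $p^{e_p}$ over exceptional primes is at most $(\prod_{\gamma_\ell(p)>1}p^{\gamma_\ell(p)-1})^2\ll_h1$, since $e_p=\gamma_\ell(p)\le 2(\gamma_\ell(p)-1)$ when $\gamma_\ell(p)>1$. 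So the exceptional primes contribute only an $O_h(1)$ factor.

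\textbf{Conclusion.} Set $q'=\prod_{p \text{ non-exc}}p^{e_p}$; then $q'\asymp_h Q$. Applying the local bounds gives $S\ll_h q'^{1/2+\epsilon_0}$ or $q'^{1-1/k+\epsilon_0}$, hence the stated bounds in terms of $Q$.

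\textbf{Main obstacle.} Theorem~\ref{rice-gauss-minor}(i) is stated only globally. I must either extract its local version from \cite{Ric19}, or apply the global theorem directly to the coprime modulus $q'$ with numerator $a\cdot(\text{unit})$. The second route works because the $p$-parts of $W^{q'}_\ell(U)$ and $W^Q_\ell(U)$ agree at non-exceptional primes, and because $\gcd(q',Q/q')=1$ lets CRT split the full sum into the $q'$-part and the exceptional part.

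The threshold $Q\le U$ versus $q'\le U$ needs care. If $Q\le U$, then $q'\le U$, so the first case applies. If $Q>U$ but $q'\le U$, then $q'^{1/2}\le q'^{1-1/k}$, which is still acceptable since $k\ge2$. In the remaining case $q'>U$, the second bound applies directly. So the case split aligns with the statement.
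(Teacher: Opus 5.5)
Your proposal is correct and matches the paper's proof. The paper writes $Q=Q'Q''$, where $Q'$ is the product of the prime powers raised by completion (your exceptional part, bounded by $O_h(1)$ via \eqref{completersmall}) and $Q''$ is your $q'$. It then factors the sum by the Chinese Remainder Theorem, bounds the $Q'$-factor trivially, and applies Theorem~\ref{rice-gauss-minor}(i) to the $Q''$-factor with the unit numerator $A\overline{Q'}$, using the same case split on $Q''$ versus $U$.
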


\begin{proof}
Let
\[
Q':=\prod_{\substack{p\\v_p(Q)>v_p(q)}}p^{v_p(Q)},
\qquad
Q'':=Q/Q'=\prod_{\substack{p\\v_p(Q)=v_p(q)}}p^{v_p(Q)}.
\]
This choice gives $(Q',Q'')=1$ and $Q=Q'Q''$.

If $p\mid Q'$, then by the definition of $Q=q^\sharp_{\ell,U}$ we have
$0<v_p(q)<\gamma_\ell(p)$ and $v_p(Q)=\gamma_\ell(p)$. In particular
$\gamma_\ell(p)>1$. Hence, using the proof of
Lemma~\ref{complete-denominator-bound},
\begin{equation}\label{eq:Qprime-bounded}
Q'
\le
\prod_{\gamma_\ell(p)>1}p^{\gamma_\ell(p)}
\le
\left(\prod_{\gamma_\ell(p)>1}p^{\gamma_\ell(p)-1}\right)^2
\ll_h 1.
\end{equation}
Also note that $A=aQ/q$ is coprime to $Q''$. 

By the Chinese Remainder Theorem, each residue class $s\bmod Q$ is uniquely
represented by a pair
\[
(s'\bmod Q',\ s''\bmod Q'').
\]
We now translate the condition $s\in W_\ell^Q(U)$ under this decomposition.
By definition, $s\in W_\ell^Q(U)$ means that for every prime $p\le U$ such that $p^{\gamma_\ell(p)}\mid Q$, we have the requirement
\[
h_\ell'(s)\not\equiv0\pmod {p^{\gamma_\ell(p)}}.
\]
Since $(Q',Q'')=1$, we know that if $p\leq U$ and $p^{\gamma_\ell(p)}\mid Q=Q'Q''$, then $p^{\gamma_\ell(p)}\mid Q'$ or $p^{\gamma_\ell(p)}\mid Q''$. Considering these two cases separately, we see that

\begin{equation}\label{eq:WQ-CRT-equivalence}
s\in W_\ell^Q(U)\iff s'\in W_\ell^{Q'}(U)\quad \text{and}\quad s''\in W_\ell^{Q''}(U).
\end{equation}

Choose integers $\overline{Q'}$ and $\overline{Q''}$ such that
\[
Q'\overline{Q'}\equiv1\pmod {Q''},
\qquad
Q''\overline{Q''}\equiv1\pmod {Q'}.
\]
For every integer $x$,
\begin{equation}\label{eq:CRT-exponential-factor}
e\left(\frac{x}{Q}\right)
=
e\left(\frac{\overline{Q''}x}{Q'}\right)
e\left(\frac{\overline{Q'}x}{Q''}\right).
\end{equation}
Applying \eqref{eq:CRT-exponential-factor} with $x=Ah_\ell(s)$, and using
$s\equiv s'\pmod {Q'}$ and $s\equiv s''\pmod {Q''}$, gives
\[
e\left(\frac{Ah_\ell(s)}{Q}\right)
=
e\left(\frac{\overline{Q''}Ah_\ell(s')}{Q'}\right)
e\left(\frac{\overline{Q'}Ah_\ell(s'')}{Q''}\right).
\]
 Using
\eqref{eq:WQ-CRT-equivalence} and the triangle inequality, we obtain
\begin{align}\label{eq:S-CRT-decomposition}
    \left|
\sum_{\substack{s\bmod Q\\s\in W_\ell^{Q}(U)}}
e\left(\frac{Ah_\ell(s)}{Q}\right)
\right|&=\left|\sum_{\substack{s'\bmod Q'\\s'\in W_\ell^{Q'}(U)}} e\left(\frac{\overline{Q''}Ah_\ell(s')}{Q'}\right)\sum_{\substack{s''\bmod Q''\\s''\in W_\ell^{Q''}(U)}}
e\left(\frac{\overline{Q'}Ah_\ell(s'')}{Q''}\right)
\right|\notag\\
&\leq\sum_{\substack{s'\bmod Q'\\ s'\in W_\ell^{Q'}(U)}}
\left|
\sum_{\substack{s''\bmod Q''\\ s''\in W_\ell^{Q''}(U)}}
e\left(\frac{\overline{Q'}Ah_\ell(s'')}{Q''}\right)
\right|.
\end{align}

For fixed $s'\pmod{Q'}$, put
$A'':=A\overline{Q'}.$ Then $(A'',Q'')=1$ because $(A,Q'')=1$ and
$(\overline{Q'},Q'')=1$. 
Applying Theorem~\ref{rice-gauss-minor}\,\textup{(i)} to each inner sum in
\eqref{eq:S-CRT-decomposition} and using \eqref{eq:Qprime-bounded} gives
\begin{equation*}\label{eq:S-Qdoubleprime-bound}
S
\ll_{h,\epsilon_0}
\begin{cases}
Q''^{1/2+\epsilon_0}, & \text{if }Q''\le U,\\[4pt]
Q''^{1-1/k+\epsilon_0}, & \text{if }Q''\ge U.
\end{cases}
\end{equation*}
Since $k\ge2$,
\[
S\ll_{h,\epsilon_0}Q''^{1-1/k+\epsilon_0}\le Q^{1-1/k+\epsilon_0}
\]
regardless of the size of $Q''$ relative to $U$. Moreover, if $Q\le U$, then $Q''\le Q\le U$, so
\[
S\ll_{h,\epsilon_0} Q''^{1/2+\epsilon_0}\le Q^{1/2+\epsilon_0}.
\]
Combining these two estimates proves \eqref{eq:complete-sum-target}, and hence the lemma.

\end{proof}

\section{The Weyl estimate}\label{weylestimate}
Recall that $h$ and consequently $h_\ell$ are positive, increasing, and concave up on $[1,\infty)$. Also, define 
\[
I(h_\ell):=\{h_\ell(n): n\in W_\ell(U)\},
\]
where $U>0$ is a parameter to be chosen later. We define a smooth weight following \cite{GreenSawhney2024FSv1}. We begin by recording the following result, which can be obtained by applying simple horizontal transformations to the function found in Lemma $5.1$ of \cite{GreenSawhney2024FSv1}.
\begin{lem}
There exists a smooth function $w:\R \to [0,1]$, supported on $[0,1]$, such that $w(x) \ge \frac{1}{2}$ for $x \in [\frac{1}{4}, \frac{3}{4}]$ and satisfying the Fourier estimate $|\widehat{w}(t)|\ll e^{-\sqrt{|t|/2}}$ for all $t \in \R$.
\end{lem}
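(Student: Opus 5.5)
The plan is to take the Green--Sawhney function $w_0$ from \cite[Lemma~5.1]{GreenSawhney2024FSv1} and transform it horizontally. That lemma supplies a smooth $w_0:\R\to[0,1]$ with support in some fixed interval $[-a,a]$ (say $[-1,1]$), with $w_0\ge \tfrac12$ on a fixed central subinterval, and with Fourier decay of the form $|\widehat{w_0}(t)|\ll e^{-\sqrt{|t|}}$. We set $w(x):=w_0(\beta(x-\tfrac12))$ for a suitable dilation factor $\beta\ge 1$. This is an affine change of variables, so $w$ is smooth and takes values in $[0,1]$.

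The first step is to choose $\beta$ so that the support and lower-bound conditions hold. If $w_0$ is supported on $[-a,a]$, then $w$ is supported on $[\tfrac12-\tfrac{a}{\beta},\tfrac12+\tfrac{a}{\beta}]$. This lies inside $[0,1]$ exactly when $\beta\ge 2a$. We also need $[\tfrac14,\tfrac34]$ to map into the region where $w_0\ge\tfrac12$. The point $x\in[\tfrac14,\tfrac34]$ maps to $\beta(x-\tfrac12)\in[-\beta/4,\beta/4]$. So if $w_0\ge \tfrac12$ on $[-b,b]$, we need $\beta\le 4b$.

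These two constraints are compatible precisely when $a\le 2b$. This is the step I expect to be the only real obstacle: the normalization in Green--Sawhney must actually give support width at most twice the width of the region where $w_0\ge\tfrac12$. If it does not, I would first try to build such a $w_0$ directly. One takes a smooth bump with $e^{-\sqrt{|t|}}$-type Fourier decay (a Gevrey-class bump, as in their construction) and forms its convolution with the indicator of an interval. The result equals $1$ on a long plateau, has support only slightly larger than the plateau, and the Fourier decay survives up to the harmless factor $1/|t|$. So the ratio of support to plateau can be made arbitrarily close to $1$.

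Finally, compute the Fourier transform:
\[
\widehat w(t)=\int w_0(\beta(x-\tfrac12))e(-tx)\,dx=\beta^{-1}e(-t/2)\,\widehat{w_0}(t/\beta),
\]
so $|\widehat w(t)|\ll e^{-\sqrt{|t|/\beta}}$. To get the stated exponent $\sqrt{|t|/2}$ we need $\beta\le 2$. With Green--Sawhney's normalization (support of length $1$ and $\beta=1$ in their scaling), or with the construction above tuned so that $\beta\le 2$ together with $2a\le\beta\le 4b$, this gives $|\widehat w(t)|\ll e^{-\sqrt{|t|/2}}$ for all $t$.

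If the available constants do not permit $\beta\le 2$, I would note that the exponent inside the square root is immaterial for later uses up to constants. In that case I would either adjust the constant in the decay of $w_0$, by rescaling $w_0$ before cutting off, or record the bound with $\beta$ in place of $2$.
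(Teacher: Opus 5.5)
Your proposal is correct and is essentially the paper's argument, which simply applies a horizontal transformation to Green--Sawhney's Lemma~5.1. With their normalization ($w_0$ supported on $[-1,1]$, $w_0\ge\frac12$ on $[-\frac12,\frac12]$, $|\widehat{w_0}(\xi)|\ll e^{-\sqrt{|\xi|}}$), your constraints $2a\le\beta\le 4b$ and $\beta\le 2$ are all tight and force $\beta=2$. That is, $w(x)=w_0(2x-1)$ with $\widehat w(t)=\tfrac12 e(-t/2)\widehat{w_0}(t/2)$, which is exactly where the $\sqrt{|t|/2}$ comes from. The only slip is your parenthetical ``support of length $1$ and $\beta=1$'': with support $[-1,1]$ and $\beta=1$, the support of $w$ would be $[-\frac12,\frac32]$. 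The fallback constructions are not needed.
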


Let $w$ be given by the lemma above. For the auxiliary polynomial $h_\ell$, define $g_{X,h_\ell}: \Z\to \R$ by
\[
g_{X,h_\ell}(x):=
\begin{cases}
0, & \text{if } |x|\notin I(h_\ell),\\
J_{U,\ell}\cdot h_\ell '(n) w\left(\frac{h_\ell (n)}{X}\right), & \text{if } |x|=h_\ell(n)\in I(h_\ell),
\end{cases}
\]
where 
\[
J_{U,\ell}:=\prod_{p\le U}
\left(1-\frac{j_\ell(p)}{p^{\gamma_\ell(p)}}\right)^{-1}>0.
\]
We know that $h_\ell(n)$ is positive and increasing for $n\in \N$ by \eqref{positivity2}, so $g_{X,h_\ell}$ is well-defined. Notice also that $g_{X,h_\ell}$ is even and supported on $\pm I(h_\ell)$. 

Recall that $\epsilon>0$ is a fixed small parameter. Fix $\gamma:=1/2$ and
let $X$ be a large ambient parameter and make the global choices
\begin{equation}\label{eq:UZ-choice} K=2^k\qquad
U:=\exp\big((\log X)^\gamma\big),
\qquad
Z:=\exp\big((\log X)^{\gamma+1/(2+2\epsilon)}\big).
\end{equation}
In the estimates below we assume
\begin{equation}\label{ell-quasi-bound}
\ell\le \exp\big((\log X)^{(2+\epsilon)/(2+2\epsilon)}\big).
\end{equation}
Since $h_\ell(1)\ll_h\ell^{k-1}=X^{o_{h,\epsilon}(1)}$ for $X$ sufficiently large, \eqref{positivity2} gives a unique $Y>1$ such that $h_\ell(Y)=X$.
In the application we will apply Theorem \ref{rice-gauss-minor}\,(ii) for integers $N$ satisfying
\begin{equation}\label{N-range}
X^{1/k-r_0} \ll N \ll X^{1/k},
\end{equation}
where $0<r_0<1/k$ is fixed. We have $\log N\asymp\log X$ throughout the range \eqref{N-range}.
Consequently, any quasipolynomial factor of the form $\exp((\log X)^d)$ for $d<1$ is also $N^{o(1)}$ on this range.
In particular, with $U,Z$ as in \eqref{eq:UZ-choice} we have
\begin{equation}\label{minorconditions}
Z^k=\exp\big(k(\log X)^{\gamma+1/(2+2\epsilon)}\big)=N^{o(1)},
\quad
e^{-\frac{\log Z}{\log U}}=\exp\big(-(\log X)^{1/(2+2\epsilon)}\big)=N^{-o(1)}.
\end{equation}

\begin{lem}\label{Weyl}
Let $X$ be sufficiently large, let $U,Z$ be as in \eqref{eq:UZ-choice}, and let $N$ satisfy \eqref{N-range}.
Let $\theta\in\R/\Z$ and suppose
\begin{equation}\label{eq:large-corr}
\Biggl|\sum_{\substack{1\le n\le N\\ n\in W_\ell(U)}} e\bigl(h_\ell(n)\theta\bigr)\Biggr|\ \ge\ \delta N.
\end{equation}
Assume also that
\begin{equation}\label{eq:delta-threshold}
\delta\ \ge\ 2 E_h(\log X)^{ek}
\Bigl(
e^{-(\log X)^{1/(2+2\epsilon)}}+\Bigl(\frac{b_kZ}{N}\log^{k^2}(b_kN^{k+1})\Bigr)^{1/K}
\Bigr),
\end{equation}
where $E_h>0$ is the implicit constant in Theorem~\ref{rice-gauss-minor}\,(ii). 

Then there exists $q\in\N$ such that
\[
q\ \ll_{h}T^{K}\delta^{-K}
\qquad\text{and}\qquad
\|q\theta\|_{\mathbb T}\ \ll_{h}T^{K}N^{-k+o_{h,\epsilon}(1)}\delta^{-K},
\]
where 
\[
T:=(\log X)^{ek}\left(\log^{k^2}(b_kN^{k+1})b_k\right)^{1/K}.
\]
\end{lem}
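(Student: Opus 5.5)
The approach is the standard Dirichlet-approximation argument combined with Theorem~\ref{rice-gauss-minor}\,(ii). Apply Dirichlet's theorem with parameter $M:=N^{k}/(b_k Z^{k})$ (or a comparable choice). This gives $q\le M$ and $(a,q)=1$ with $|\theta-a/q|<1/(qM)\le q^{-2}$. Now invoke Theorem~\ref{rice-gauss-minor}\,(ii) with $U,Z$ as in \eqref{eq:UZ-choice}. Combined with the hypothesis \eqref{eq:large-corr}, it yields
\[
\delta\le E_h(\log X)^{ek}\Bigl(e^{-(\log X)^{1/(2+2\epsilon)}}+\Bigl(b_k\log^{k^2}(b_kqN)\bigl(q^{-1}+\tfrac{Z}{N}+\tfrac{qZ^k}{b_kN^k}\bigr)\Bigr)^{1/K}\Bigr).
\]
Since $q\le M\le N^k$, we have $\log(b_kqN)\le\log(b_kN^{k+1})$. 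The choice of $M$ makes $qZ^k/(b_kN^k)\le 1$. That term is not small, so the Dirichlet parameter should be taken a bit smaller, e.g.\ $M:=N^{k}/(b_kZ^{k+1})$, making the last term $\le Z^{-1}\le Z/N$ once $N\ge Z^2$ (true by \eqref{N-range} and \eqref{minorconditions}).

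Using $(x+y+z)^{1/K}\le x^{1/K}+y^{1/K}+z^{1/K}$, the contributions from $e^{-(\log X)^{1/(2+2\epsilon)}}$ and from $Z/N$ together are at most $\delta/2$, by the threshold \eqref{eq:delta-threshold}; the factor $2$ there is exactly for this absorption. What remains is
\[
\delta/2\ll_h(\log X)^{ek}\bigl(b_k\log^{k^2}(b_kN^{k+1})\bigr)^{1/K}q^{-1/K}=Tq^{-1/K},
\]
which gives $q\ll_h T^K\delta^{-K}$.

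For the second bound, $\|q\theta\|_{\mathbb T}\le q|\theta-a/q|<1/M=b_kZ^{k+1}N^{-k}$. Here $Z^{k+1}=N^{o(1)}$ by \eqref{minorconditions}, and $b_k\ll_h\ell^{k-1}=N^{o(1)}$ by Remark~\ref{hl bounded} and \eqref{ell-quasi-bound}. Hence $\|q\theta\|\ll N^{-k+o_{h,\epsilon}(1)}$. Since $T\ge 1$ and $\delta\le 1$, the factor $T^K\delta^{-K}\ge1$ can be inserted for free, giving the stated form.

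The main delicate step is the bookkeeping of the Dirichlet parameter. It must be small enough that the $qZ^k/(b_kN^k)$ term is dominated by the $Z/N$ term already controlled by \eqref{eq:delta-threshold}, while still leaving $1/M=N^{-k+o(1)}$. This uses that $b_k$ and $Z$ are $N^{o(1)}$ uniformly under \eqref{ell-quasi-bound} and \eqref{N-range}. One should also check $M\ge1$ so that Dirichlet applies, which again follows from these size bounds for $X$ large.
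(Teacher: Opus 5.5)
Your overall route (Dirichlet's theorem, then Theorem~\ref{rice-gauss-minor}\,(ii), then absorbing everything except the $q^{-1}$ term into \eqref{eq:delta-threshold}) is the paper's. However, the step you single out as delicate rests on a reversed inequality. With $M=N^k/(b_kZ^{k+1})$ you get $qZ^k/(b_kN^k)\le Z^{-1}$. But $Z^{-1}\le Z/N$ is equivalent to $N\le Z^2$, not $N\ge Z^2$. Since $Z=N^{o(1)}$ here, $Z^{-1}$ is vastly larger than $Z/N$. No adjustment of $M$ rescues this within the $Z/N$ budget either. Forcing $qZ^k/(b_kN^k)\le Z/N$ requires $M\le b_kN^{k-1}Z^{1-k}$, and then Dirichlet only yields $\|q\theta\|_{\mathbb T}\ll N^{-(k-1)+o(1)}$, which is too weak for the statement.

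The repair is to absorb this term into the other part of the threshold. After your splitting, its contribution is at most $E_h(\log X)^{ek}\bigl(\log^{k^2}(b_kN^{k+1})/(b_kZ)\bigr)^{1/K}$. Here $\log Z=(\log X)^{1/2+1/(2+2\epsilon)}$, while $\log(b_kN^{k+1})\ll_h\log X$ by Remark~\ref{hl bounded}, \eqref{ell-quasi-bound} and \eqref{N-range}. Hence this contribution is $o\bigl((\log X)^{ek}e^{-(\log X)^{1/(2+2\epsilon)}}\bigr)=o(\delta)$, so it is at most $\delta/4$ for large $X$. Then $\delta\le\delta/2+\delta/4+E_hTq^{-1/K}$, which gives $q\ll_h T^K\delta^{-K}$. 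The rest of your argument then goes through, and it even gives the slightly stronger bound $\|q\theta\|_{\mathbb T}\ll N^{-k+o(1)}$.

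The paper avoids this comparison altogether by using the $\delta$-dependent Dirichlet parameter $Q=(\delta/(4E_hT))^K N^k/Z^k$. This choice makes the third term exactly $\delta/4$, and $Q\ge1$ follows from the $Z/N$ part of \eqref{eq:delta-threshold}. The price is the factor $T^K\delta^{-K}$ in the bound for $\|q\theta\|_{\mathbb T}$, which the statement permits.
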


\begin{proof}
Write
\[
S(\theta)\ :=\ \sum_{\substack{1\le n\le N\\ n\in W_\ell(U)}} e\bigl(h_\ell(n)\theta\bigr),
\]
so that $|S(\theta)|\ge \delta N$ by \eqref{eq:large-corr}. 
Define 
\begin{equation}\label{eq:Q-def}
Q\ :=\ \left(\frac{\delta}{4E_hT}\right)^{K}\cdot\frac{N^{k}}{Z^{k}}.
\end{equation}
The second term in \eqref{eq:delta-threshold} gives $Q\gg (N/Z)^{k-1}\geq1$, for $X$ sufficiently large.

Choose the representative of $\theta$ in $[-1/2,1/2)$, which we continue to denote by $\theta$. By Dirichlet's theorem there exist coprime integers $a,q$ with $1\le q\le Q$ such that
\[
\Bigl|\theta-\frac{a}{q}\Bigr|\ \le\ \frac{1}{qQ}\ \le\ \frac{1}{q^{2}}.
\]
By Theorem~\ref{rice-gauss-minor}\,(ii), using $(x+y+z)^{1/K}\le x^{1/K}+y^{1/K}+z^{1/K}$ for $x,y,z\ge 0$ and dividing by $N$, we obtain
\begin{align}
\delta \le\ \frac{|S(\theta)|}{N}
\leq E_h\left(
(\log X)^{ek}e^{-(\log X)^{1/(2+2\epsilon)}}
+Tq^{-1/K}
+T\Bigl(\frac{Z}{N}\Bigr)^{1/K}
+Tq^{1/K}\Bigl(\frac{Z^k}{b_kN^k}\Bigr)^{1/K}\right).
\label{eq:weyl-raw}
\end{align}
 For the rightmost term, using \eqref{eq:Q-def} we have 
\[
E_hq^{1/K}T\Bigl(\frac{Z^k}{b_kN^k}\Bigr)^{1/K}
\le E_hQ^{1/K}T\Bigl(\frac{Z^k}{N^k}\Bigr)^{1/K}
= \frac{\delta}{4}.
\]
By the assumption \eqref{eq:delta-threshold} in the lemma, we have 
\[
E_h\left((\log X)^{ek}e^{-(\log X)^{1/(2+2\epsilon)}}
+T\Bigl(\frac{Z}{N}\Bigr)^{1/K}\right)\leq \delta/2.
\]
Combining these facts, we see that $\frac{\delta}{4}\ \ll_h \ Tq^{-1/K}, $ and hence $q\ \ll_h\ T^{K}\delta^{-K}.$ Finally, from $\|q\theta\|_{\mathbb T}\le 1/Q$ and using the definition of $Q$ as in \eqref{eq:Q-def} together with $Z^k=N^{o(1)}$ we have
\[
\|q\theta\|_{\mathbb T}\ll_{h}T^KN^{-k+o_{h,\epsilon}(1)}\delta^{-K},
\qedhere\]
\end{proof}
\medskip

\begin{lem}\label{Mukul}
Let $X$ be large. Assume \eqref{ell-quasi-bound}. There exists a constant $D_h>0$ such that the following holds. Let $0<\delta<1$ satisfy
\[
\frac{\delta}{J_{U,\ell}}\geq D_h (\log X)^{ek}\exp\!\Big(-(\log X)^{1/(2+2\epsilon)}\Big).
\]
If $\theta\in\R/\Z$ and $|\widehat{g_{X,h_\ell}}(\theta)|\ge \delta X$, then there exists $q\in\N$ such that
\[
q\ll_{h} \delta^{-K}X^{o_{h,\epsilon}(1)},
\qquad
\|q\theta\|_{\mathbb T}\ll_{h}\delta^{-K-k}X^{-1+o_{h,\epsilon}(1)}.
\]
\end{lem}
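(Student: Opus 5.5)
The plan is to reduce the weighted sum $\widehat{g_{X,h_\ell}}(\theta)$ to unweighted sums $S_N(\theta):=\sum_{n\le N,\,n\in W_\ell(U)} e(h_\ell(n)\theta)$ by partial summation, and then apply Lemma~\ref{Weyl}.

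\textbf{Step 1: unfold the Fourier transform.} Since $g_{X,h_\ell}$ is even and supported on $\pm I(h_\ell)$, we have
\[
\widehat{g_{X,h_\ell}}(\theta)=2J_{U,\ell}\,\Re\sum_{n\in W_\ell(U)} h_\ell'(n)\,w\!\left(\tfrac{h_\ell(n)}{X}\right)e(-h_\ell(n)\theta).
\]
The weight $\phi(t):=h_\ell'(t)w(h_\ell(t)/X)$ is supported on $1\le t\le Y$, where $h_\ell(Y)=X$. By Lemma~\ref{lem:invert_hl}, $Y\asymp (X/b_k)^{1/k}$, and $b_k\le R_h\ell^{k-1}=X^{o(1)}$ by \eqref{ell-quasi-bound}. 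Hence $Y$ lies in the range \eqref{N-range}, up to $X^{o(1)}$ factors.

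\textbf{Step 2: partial summation.} Partial summation gives
\[
\Bigl|\sum_n \phi(n)e(-h_\ell(n)\theta)\Bigr|\le \int_1^Y |\phi'(t)|\,|S_t(-\theta)|\,dt .
\]
We bound the total variation $\int_1^Y|\phi'|$. Since $h_\ell'>0$ is increasing, $h_\ell'$ has total variation at most $h_\ell'(Y)$ on $[1,Y]$. The term $h_\ell'(t)^2w'(h_\ell(t)/X)/X$ integrates to $O(h_\ell'(Y))$ after the substitution $u=h_\ell(t)/X$. Hence $\int|\phi'|\ll h_\ell'(Y)\ll_h X/Y$, because $h_\ell'(Y)\asymp kX/Y$ (the lower-order coefficients are $X^{o(1)}$ and negligible at this scale).

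We also discard the range $t\le N_0:=X^{1/k-r_0}$. Its contribution is at most $\int_1^{N_0}|\phi'(t)|\,t\,dt\ll N_0h_\ell'(N_0)$. This is a power of $X$ smaller than $\delta X$, provided $\delta\ge X^{-c}$, which follows from the hypothesis on $\delta$ because $J_{U,\ell}\ge1$.

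\textbf{Step 3: pigeonhole and apply Lemma~\ref{Weyl}.} If $|\widehat{g}(\theta)|\ge\delta X$, then, dividing by $2J_{U,\ell}$ and using Step 2, there is some $N\in[N_0,Y]$ with
\[
|S_N(\theta)|\ge c_h\,\frac{\delta}{J_{U,\ell}}\,\frac{X}{h_\ell'(Y)}\ge c_h'\,\frac{\delta}{J_{U,\ell}}\,Y\ge c_h'\,\frac{\delta}{J_{U,\ell}}\,N .
\]
So \eqref{eq:large-corr} holds with $\delta':=c\delta/J_{U,\ell}$.

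Checking \eqref{eq:delta-threshold} is where the main care is needed. The first term there matches the hypothesis on $\delta$ once $D_h$ is large. For the second term we must show that $(b_kZ/N)^{1/K}$ times logarithmic factors is $\le X^{-c}$. This holds because $b_k=X^{o(1)}$, $Z=X^{o(1)}$ and $N\ge X^{1/k-r_0}$, so this term is dominated by the first.

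Lemma~\ref{Weyl} then gives $q\ll T^K\delta'^{-K}$ and $\|q\theta\|\ll T^KN^{-k+o(1)}\delta'^{-K}$. Now $T=X^{o(1)}$ and $J_{U,\ell}\ll(\log U)^{k-1}=X^{o(1)}$ by Lemma~\ref{J_USmall}. For the second bound we must convert $N^{-k}$ into $X^{-1}$, and the lower cutoff on $N$ loses too much here. The fix is to sharpen the pigeonhole: the large contribution must come from $N\gg \delta' Y$, since the range $t\le\delta'Y$ contributes at most $\int_1^{\delta' Y}|\phi'|\,t\,dt\ll \delta' Y h_\ell'(\delta' Y)\ll \delta'^{k}X$, which is small compared with $\delta' X$ after adjusting constants. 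Thus $N^{-k}\ll \delta'^{-k}Y^{-k}\ll\delta^{-k}X^{-1+o(1)}$, giving $\|q\theta\|\ll\delta^{-K-k}X^{-1+o(1)}$, as claimed. This sharpened pigeonhole, i.e.\ tracking the dependence of $N$ on $\delta$, is the step I expect to be the main obstacle. It also supplies the $N_0$ cutoff of Step 2 automatically, since $\delta\ge X^{-o(1)}$.
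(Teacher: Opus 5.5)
Your argument is correct, but it reaches the unweighted sums by a different route than the paper.

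\textbf{The paper's route.} The paper first expands $w$ by Fourier inversion. Using the decay of $\widehat{w}$, it finds a single $\xi$ with $|\xi|\ll J_{U,\ell}/\delta$ such that the sum weighted by $h_\ell'(n)$ alone, at the shifted frequency $\beta=-\theta+\xi/X$, is $\gg(\delta/J_{U,\ell})X$. It then removes the monotone weight $h_\ell'$ by writing $h_\ell'(n)=\int_0^{h_\ell'(n)}du$, which is partial summation in layer-cake form. It applies Lemma~\ref{Weyl} at $\beta$, and must finally return to $\theta$ via $\|q\theta\|_{\mathbb T}\le\|q\beta\|_{\mathbb T}+|q\xi|/X$. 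The extra term there is only $\ll\delta^{-K-1}X^{-1+o(1)}$, so it is absorbed.

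\textbf{Your route.} You do Abel summation against the full real weight $\phi(t)=h_\ell'(t)w(h_\ell(t)/X)$. You bound its total variation by $O(h_\ell'(Y))$ using the substitution $u=h_\ell(t)/X$. This uses only $0\le w\le 1$ and $\int|w'|<\infty$, never the Fourier decay of $\widehat{w}$. You also apply Lemma~\ref{Weyl} directly at $\theta$, so there is no frequency shift to undo.

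\textbf{Comparison.} The paper's version only ever handles the monotone weight $h_\ell'$, so it never differentiates the combined weight, but it pays with the $\xi$-bookkeeping. Yours is slightly shorter and more self-contained.

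\textbf{The step you flag as the main obstacle.} It is in fact automatic. Since $|S_N(\theta)|\le N$ trivially, any $N$ with $|S_N(\theta)|\ge c\,(\delta/J_{U,\ell})Y$ already satisfies $N\ge c\,(\delta/J_{U,\ell})Y$. This is exactly how the paper obtains $N^{-k}\ll(\delta/J_{U,\ell})^{-k}Y^{-k}\ll\delta^{-k}X^{-1+o(1)}$, and it also places $N$ in the range \eqref{N-range}. So neither the cutoff at $N_0$ nor your separate estimate for the range $t\le\delta' Y$ is needed, although both are valid.
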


\begin{proof}
Let $C$ be a positive constant depending on $w$ but independent of all other parameters.
Let $Y$ be such that $h_\ell(Y)=X$. Since $w$ is supported on $[0,1]$, the definition of $g_{X,h_\ell}$ gives
\begin{equation}\label{eq:ghat}
    \widehat{g_{X,h_\ell}}(\theta)
=2J_{U,\ell}\cdot\Re\sum_{\substack{1\le n\le Y\\ n\in W_\ell(U)}} h_\ell'(n)
w\Big(\frac{h_\ell(n)}{X}\Big)e(-\theta h_\ell(n)).
\end{equation}

From $|\widehat{g_{X,h_\ell}}(\theta)|\ge \delta X$ and $|\Re z|\le |z|$, we obtain
\[
\Bigg|\sum_{\substack{1\le n\le Y\\ n\in W_\ell(U)}} h_\ell'(n)
w\Big(\frac{h_\ell(n)}{X}\Big)e(-\theta h_\ell(n))\Bigg|
\ \ge\ \frac{\delta X}{2J_{U,\ell}}.
\]
Expanding $w$ using the inverse Fourier transform and using the triangle inequality gives
\begin{equation}\label{eq:delta/Jul}
    \int_{-\infty}^{\infty} |\widehat{w}(\xi)|
\Bigg|\sum_{\substack{1\le n\le Y\\ n\in W_\ell(U)}} h_\ell'(n)
e\Big(\Big(-\theta+\frac{\xi}{X}\Big)h_\ell(n)\Big)\Bigg|\,d\xi
\ \gg_h\ \frac{\delta}{J_{U,\ell}}X.
\end{equation}

Since $h_{\ell}''>0$ on $[1,\infty)$, using Markov brothers' inequality (see e.g. equation (1.1) in \cite{EKS00}) we obtain
\begin{equation}\label{eq:hl' bound}
    \max_{1\le n\le Y} h_\ell'(n)\leq h_\ell'(Y)\ll_{h}\frac{X}{Y}. 
\end{equation}

Now, we trivially bound the inner sum in \eqref{eq:delta/Jul} using the triangle inequality and \eqref{eq:hl' bound}:
\begin{equation}\label{eq:Mukul:trivial}
\Bigg|\sum_{\substack{1\le n\le Y\\ n\in W_\ell(U)}} h_\ell'(n)
e\Big(\Big(-\theta+\frac{\xi}{X}\Big)h_\ell(n)\Big)\Bigg|
\ \le\ \sum_{1\le n\le Y} h_\ell'(n)
\ \ll_h\ Y\max_{1\le n\le Y} h_\ell'(n)
\ \ll_h\ X.
\end{equation}
Using this and the Fourier decay $|\widehat w(\xi)|\ll \exp(-\sqrt{|\xi|/2})$, the contribution of $|\xi|\ge C/(\delta/J_{U,\ell})$
to the outer integral is $o((\delta/J_{U,\ell})X)$ provided $C$ is chosen large enough.
Thus there exists some $\xi$ with $|\xi|\le C/(\delta/J_{U,\ell})$ such that
\begin{equation}\label{eq:Mukul:weightedcorr}
\Bigg|\sum_{\substack{1\le n\le Y\\ n\in W_\ell(U)}} h_\ell'(n)
e\Big(\Big(-\theta+\frac{\xi}{X}\Big)h_\ell(n)\Big)\Bigg|
\ \gg_h\ \frac{\delta}{J_{U,\ell}}X.
\end{equation}

Write $\beta:=-\theta+\xi/X$. Changing the order of summation and letting $t(u)$ denote the inverse function of $h_\ell'$ on
$[h_\ell'(1),h_\ell'(Y)]$, and setting $t(u)=1$ for $0\le u<h_\ell'(1)$, we have
\[
\sum_{\substack{1\le n\le Y\\ n\in W_\ell(U)}} h_\ell'(n)e(\beta h_\ell(n))
=\sum_{\substack{1\le n\le Y\\ n\in W_\ell(U)}} \int_{0}^{h_\ell'(n)} e(\beta h_\ell(n))\, du
=\int_{0}^{h_\ell'(Y)} \sum_{\substack{t(u)\le n\le Y\\ n\in W_\ell(U)}} e(\beta h_\ell(n))\, du.
\]
Combining this with \eqref{eq:Mukul:weightedcorr} and substituting $u=yh_\ell'(Y)$ gives
\[
\Bigg|\int_{0}^{1} \sum_{\substack{t(yh_\ell'(Y))\le n\le Y\\ n\in W_\ell(U)}} e(\beta h_\ell(n))\,dy\Bigg|
\ \gg_h\ \frac{(\delta/J_{U,\ell})X}{h_\ell'(Y)}.
\]
By \eqref{eq:hl' bound}, $h_\ell'(Y)\ll_h X/Y$, so we can rewrite this as
\[
\Bigg|\int_{0}^{1} \sum_{\substack{t(yh_\ell'(Y))\le n\le Y\\ n\in W_\ell(U)}} e(\beta h_\ell(n))\,dy\Bigg|
\ \gg_h\ \frac{\delta}{J_{U,\ell}}Y.
\]
Then for some $y\in[0,1]$ we have
\begin{equation}\label{eq:Mukul:tail}
\Bigg|\sum_{\substack{t(yh_\ell'(Y))\le n\le Y\\ n\in W_\ell(U)}} e(\beta h_\ell(n))\Bigg|
\ \gg_h\ \frac{\delta}{J_{U,\ell}}Y.
\end{equation}
Let $M:=\lceil t(yh_\ell'(Y))\rceil$ and write
\[
S(N):=\sum_{\substack{1\le n\le N\\ n\in W_\ell(U)}} e(\beta h_\ell(n)).
\]
Then \eqref{eq:Mukul:tail} is $|S(Y)-S(M-1)|\gg_h \frac{\delta}{J_{U,\ell}}Y$, so for some $N\le Y$ we have
$|S(N)|\gg_h \frac{\delta}{J_{U,\ell}}Y$, so that 
\begin{equation}\label{eq:Mukul:largecorr}
\Bigg|\frac1N\sum_{\substack{1\le n\le N\\ n\in W_\ell(U)}} e(\beta h_\ell(n))\Bigg|
\ \gg_h\ \frac{\delta}{J_{U,\ell}}.
\end{equation}
By the triangle inequality, this also gives $N\gg_h \frac{\delta}{J_{U,\ell}}Y$.

The goal is to apply Lemma~\ref{Weyl} to the phase $\beta$ with a sufficiently small constant multiple of $\delta/J_{U,\ell}$ in place of $\delta$. 

By Lemma~\ref{lem:invert_hl} we have $Y=(X/b_k)^{1/k}+O_h(1)$, and since $b_k\ll_h \ell^{k-1}$, the bound \eqref{ell-quasi-bound} gives $Y=X^{1/k-o_{h,\epsilon}(1)}$. In particular $N\le Y\ll_h X^{1/k}$ and $N\gg_h \frac{\delta}{J_{U,\ell}}Y$,
so $N$ satisfies \eqref{N-range} for $X$ large.

Moreover the hypothesis
\[
\frac{\delta}{J_{U,\ell}}\geq D_h(\log X)^{ek}\exp(-(\log X)^{1/(2+2\epsilon)})
\]
ensures the condition \eqref{eq:delta-threshold}
in Lemma~\ref{Weyl} if $D_h\geq 2E_h$.  This is because the remaining term in \eqref{eq:delta-threshold} is 
\[2E_hT\left(\frac ZN\right)^{1/K}\ll_h(\log{N})^{c}\left(\frac{\exp(O_{h,\epsilon}(\log{N})^{\gamma+1/(2+2\epsilon)})}{N}\right)^{1/K}\ll_h N^{-1/(K+1)}=o_{h,\epsilon}(\delta/J_{U,\ell}),\]
 Here $X$ is sufficiently large and we use
$N\gg_h \frac{\delta}{J_{U,\ell}}Y$, $b_k\ll_h \ell^{k-1}$, $\log N\asymp_{h,\epsilon} \log X$ \eqref{ell-quasi-bound}, and the definition of $Z$.

Thus Lemma~\ref{Weyl} yields an integer $q\ge 1$ such that
\[
q\ll_h T^{K} J_{U,\ell}^{K}\delta^{-K},
\qquad
\|q\beta\|_{\mathbb T}\ll_h T^{K} J_{U,\ell}^{K}N^{-k+o_{h,\epsilon}(1)}\delta^{-K},
\]
where $T$ is as in Lemma~\ref{Weyl}. 
Using the definition of $T$, $\log N\asymp_h \log X$ and Lemma \ref{J_USmall}, we obtain \[q\ll_h b_k(\log X)^{c_4}(\log U)^{(k-1)K}\delta^{-K}\ll_hX^{o_{h,\epsilon}(1)}\delta^{-K},\]
\[\|q\beta\|_{\mathbb T}\ll_h b_k^2(\log X)^{c_4}(\log U)^{(k-1)(K+k)}X^{-1+o_h(1)}\delta^{-(K+k)}\ll_hX^{-1+o_{h,\epsilon}(1)}\delta^{-(K+k)}\]

for $c_4>0$, where we have used $\frac{\delta}{J_{U,\ell}}Y\ll_h N$ and $Y^{-k}\ll_h b_k/X$ by Lemma~\ref{lem:invert_hl}.

Finally, since $\beta=-\theta+\xi/X$ and $|\xi|\le C/(\delta/J_{U,\ell})$, we have
\[
\|q\theta\|_{\mathbb T}\le \|q\beta\|_{\mathbb T}+\frac{|q\xi|}{X}
\ll_h \delta^{-K-k}X^{-1+o_{h,\epsilon}(1)},
\]
which completes the proof.
\end{proof}

\section{The major arc estimate}\label{majorarcestimate}
Next, we estimate $\widehat{g_{X,h_\ell}}(\xi)$ in the case where $\xi$ is close to a rational number $a/q$ with a small denominator. We sum over disjoint short intervals on which the summand has bounded variation. In this way we achieve a more favorable error term, which is key to achieving the desired exponent range in Theorem \ref{main}; see \cite[Theorems 2a--2b]{Vinogradov54}.

\begin{lem}[Major arc estimate]\label{Rishika}
Let $q\in\N$ and $a\in\Z$ be such that $q<X^{1/(8k)}$ and $(a,q)=1$. Assume
\[
\log(Y/q)\ge 2c\log U\log\log U,
\]
where $Y$ is uniquely determined by $h_\ell(Y)=X$. Let $\theta\in\R$ satisfy $|\theta|\le X^{-1+1/(8k)}$, and assume $ \ell\le \exp\big((\log X)^{(2+\epsilon)/(2+2\epsilon)}\big) $. Then
\[
\Bigl|\widehat{g_{X,h_\ell}}\Bigl(\frac{a}{q}+\theta\Bigr)\Bigr|
\ \ll_{h,\epsilon}\
Xe^{-\sqrt{|\theta|X/2}}q^{-1/(k+\epsilon)}
+
J_{U,\ell}Xe^{-c_2\frac{\log(Y/q)}{\log U}},
\]
for $c_2>0.$ Moreover, if $q^\sharp_{\ell,U}\le U$, then the stronger estimate
\[
\Bigl|\widehat{g_{X,h_\ell}}\Bigl(\frac{a}{q}+\theta\Bigr)\Bigr|
\ \ll_{h,\epsilon}\
Xe^{-\sqrt{|\theta|X/2}}q^{-1/(2+\epsilon)}
+
J_{U,\ell}Xe^{-O_{h,\epsilon}\left(\frac{\log(Y/q)}{\log U}\right)}
\]
also holds.
\end{lem}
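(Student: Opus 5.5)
We write $\widehat{g_{X,h_\ell}}(a/q+\theta)$ as in \eqref{eq:ghat}, as $2J_{U,\ell}\Re$ of
\[
\Sigma:=\sum_{\substack{1\le n\le Y\\ n\in W_\ell(U)}}h_\ell'(n)\,w\Big(\frac{h_\ell(n)}{X}\Big)e\Big(-\frac{a}{q}h_\ell(n)\Big)e\big(-\theta h_\ell(n)\big).
\]
The first step is to complete the denominator. Put $Q:=q^\sharp_{\ell,U}$ and $A:=aQ/q$, so that $a/q=A/Q$. Then $e(-Ah_\ell(n)/Q)$ depends only on $n\bmod Q$, and by Lemma~\ref{complete-denominator-bound} we have $Q\asymp_h q$. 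Since $Q<X^{1/(8k)}\cdot O_h(1)$ and $\log(Y/Q)\ge\log(Y/q)-O_h(1)$, the hypotheses of Proposition~\ref{ricebrunsum2} remain available with $Q$ in place of $q$. We split $\Sigma$ by residue class $s\bmod Q$. Classes with $s\notin W_\ell^Q(U)$ contribute nothing, by the first part of Proposition~\ref{ricebrunsum2}. This gives
\[
\Sigma=\sum_{\substack{s\bmod Q\\ s\in W_\ell^Q(U)}}e\Big(-\frac{Ah_\ell(s)}{Q}\Big)\Sigma_s,\qquad \Sigma_s:=\sum_{\substack{n\le Y,\ n\in W_\ell(U)\\ n\equiv s\bmod Q}}h_\ell'(n)\phi(n),
\]
where $\phi(t):=w(h_\ell(t)/X)e(-\theta h_\ell(t))$.

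The second step evaluates each $\Sigma_s$ by short intervals and partial summation. We take $H:=Y/V$ with $V:=\lceil (1+|\theta|X)\rceil\cdot X^{\eta}$ for a small fixed $\eta$, and partition $[1,Y]$ into intervals $(v,v+H]$. On each interval, $\phi$ has total variation $\ll (1+|\theta|X)\cdot H/Y\cdot(\text{const})$, because $\frac{d}{dt}h_\ell(t)/X\ll 1/Y$ by \eqref{eq:hl' bound}. Using $|\theta|\le X^{-1+1/(8k)}$, $Q<X^{1/(8k)}$ and $Y=X^{1/k-o(1)}$, we get $\log(H/Q)\ge \tfrac12\log(Y/q)$, so \eqref{eq:brun-short-range} holds; this is why $2c$ appears in the hypothesis. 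On each interval we apply Abel summation with the counting function from \eqref{eq:ricebrun-short-derivative}. The main terms reassemble into
\[
\frac{1}{Q}\prod_{p\le U,\ p^{\gamma_\ell(p)}\nmid Q}\Big(1-\frac{j_\ell(p)}{p^{\gamma_\ell(p)}}\Big)\int_1^Y h_\ell'(t)\phi(t)\,dt,
\]
up to an error per interval of $O\big(\tfrac{XH}{QY}e^{-c'\log(H/Q)/\log U}\big)$ times $(1+\mathrm{Var}\,\phi)$. Summing over the $\ll Y/H$ intervals and the $Q$ classes gives a total error of $\ll X e^{-c'\log(H/Q)/\log U}$, which is $\ll Xe^{-c_2\log(Y/q)/\log U}$ once the polynomial factor $V\le X^{2/(8k)}$ is absorbed. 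Multiplying by $J_{U,\ell}$ produces the second term of the statement.

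The third step bounds the main term. Substituting $u=h_\ell(t)$, the integral becomes $\int w(u/X)e(-\theta u)\,du=X\widehat w(\theta X)$, up to negligible boundary contributions, so it is $\ll Xe^{-\sqrt{|\theta|X/2}}$. Next, $J_{U,\ell}$ times the truncated product equals $\prod_{p\le U,\ p^{\gamma_\ell(p)}\mid Q}(1-j_\ell(p)/p^{\gamma_\ell(p)})^{-1}$. The primes in this product divide $Q$, and each factor is at most $p$ for $\gamma_\ell(p)=1$ since $j_\ell(p)\le p-1$; this needs care, and I would instead bound it by $(1-(k-1)/p)^{-1}$ for $p>k$ and by $O_h(1)$ for the finitely many remaining primes, using \eqref{completersmall}. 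The result is $\ll_{h,\epsilon}Q^{\epsilon/10}$ by a divisor-type bound. The complete sum $\sum_s e(-Ah_\ell(s)/Q)$ is controlled by Lemma~\ref{bounded-completion-gauss}: it is $Q^{1-1/k+\epsilon_0}$ in general, and $Q^{1/2+\epsilon_0}$ when $Q\le U$. Dividing by $Q$ and using $Q\asymp_h q$ gives $q^{-1/k+\epsilon_0'}\le q^{-1/(k+\epsilon)}$ and $q^{-1/2+\epsilon_0'}\le q^{-1/(2+\epsilon)}$ after choosing $\epsilon_0$ small in terms of $\epsilon,k$. This completes both versions.

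I expect the main obstacle to be the second step, specifically keeping the accumulated sieve error at $Xe^{-c\log(Y/q)/\log U}$ with no loss from the number of intervals. The bounded-variation partition must be fine enough that the oscillation of $e(-\theta h_\ell)$ is controlled, yet coarse enough that $H/Q$ stays $\ge (Y/q)^{1/2}$. The polynomial-size losses ($V$ and $Q$) have to be absorbed, by shrinking the constant $c'$ to $c_2$, into the factor $e^{-c'\log(H/Q)/\log U}$, which beats any fixed power of $X$ only up to $X^{o(1)}$. So I would check carefully that the number of intervals appears only through $\sum_{\text{intervals}} H/Y\cdot(1+\mathrm{Var})\ll 1+|\theta|X$. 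That factor must then be absorbed using $|\theta|X\le X^{1/(8k)}$ against the gain $e^{-c'\log(H/Q)/\log U}$. If this absorption fails, I would restrict to $|\theta|X\le (\log X)^{C}$, where the first term already dominates, and treat larger $|\theta|$ by bounding the entire expression trivially against $e^{-\sqrt{|\theta|X/2}}$.
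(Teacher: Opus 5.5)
Your proposal is correct and is essentially the paper's proof. You complete the denominator to $Q=q^\sharp_{\ell,U}$ and cut $(1,Y]$ into about $1+|\theta|X$ short intervals on which $w(h_\ell/X)e(-\theta h_\ell)$ has bounded variation. You then apply partial summation against Proposition~\ref{ricebrunsum2}, where $\log(H/Q)\ge\frac12\log(Y/q)$ accounts for the $2c$ in the hypothesis. Finally you bound the Euler factor by $Q^{\eta}$ and the complete sum by Lemma~\ref{bounded-completion-gauss}. The paper uses exactly $R=\lceil 1+|\theta|X\rceil$ intervals, so your extra factor $X^{\eta}$ is harmless but unnecessary.

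The worry in your last paragraph comes from a miscount. With your $H$ one has $\sum_j (H/Y)(1+\mathrm{Var}_j)\ll 1+(1+|\theta|X)H/Y\ll 1$, so no factor $1+|\theta|X$ survives, and $|\theta|X$ enters only through $\log(H/Q)$. This matters, because such a power of $X$ could not be absorbed by $e^{-c'\log(H/Q)/\log U}=X^{-o(1)}$. Your trivial-bound fallback would also not give the stated estimate when $|\theta|X$ is large, so it is fortunate that it is not needed.
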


\begin{proof}
Put
\[
Q:=q^\sharp_{\ell,U},
\qquad
A:=a\frac{Q}{q}.
\]
Then $Q$ is $(\ell,U)$-complete, $Q\ll_hq$ by Lemma~\ref{complete-denominator-bound}, and $a/q=A/Q$.

Set
\[
R:=\bigl\lceil1+|\theta|X\bigr\rceil,
\qquad
H:=\frac{Y-1}{R},
\qquad
v_j:=1+jH
\quad(0\le j\le R).
\]
Thus the intervals $(v_{j-1},v_j]$, with $1\le j\le R$, partition $(1,Y]$. Lemma~\ref{lem:invert_hl} gives
$
\log Y=\left(\frac1k+o_{h,\epsilon}(1)\right)\log X$. Moreover, since $q<X^{1/(8k)}$, we have
\[
\log R\le\frac{1}{8k}\log X+O_k(1),
\qquad
\log(Y/q)\ge\left(\frac{7}{8k}+o_{h,\epsilon}(1)\right)\log X.
\]
Since $Q\ll_hq$, it follows that, for $X$ sufficiently large,
\begin{equation}\label{eq:major-short-sieve-range}
\log(H/Q)
\ge\log(Y/q)-\log R-O_h(1)
\ge\frac12\log(Y/q)
\ge c\log U\log\log U.
\end{equation}

Define
\[
G(t):=w\left(\frac{h_\ell(t)}X\right)e(-\theta h_\ell(t)).
\]
Fix $b\in W_\ell^Q(U)$ and $1\le j\le R$. We first estimate the contribution of the residue class $b\bmod Q$ on the short interval $(v_{j-1},v_j]$, namely
\[
\sum_{\substack{v_{j-1}<n\le v_j\\n\in W_\ell(U)\\n\equiv b\bmod Q}}
h_\ell'(n)G(n).
\]
The partial sums of the coefficients $h_\ell'(n)$ appearing in this expression are estimated by Proposition~\ref{ricebrunsum2}. More precisely, uniformly for $v_{j-1}\le t\le v_j$,
\begin{equation}\label{eq:major-short-sieve}
\sum_{\substack{v_{j-1}<n\le t\\n\in W_\ell(U)\\n\equiv b\bmod Q}}
h_\ell'(n) 
=
\frac{h_\ell(t)-h_\ell(v_{j-1})}{Q}
\prod_{\substack{p\le U\\p^{\gamma_\ell(p)}\nmid Q}}
\left(1-\frac{j_\ell(p)}{p^{\gamma_\ell(p)}}\right)
+E_{b,j}(t),
\end{equation}
where
\begin{equation}\label{eq:Ebj bound}
    |E_{b,j}(t)|
\ll_h
\frac{XH}{QY}
e^{-c'\frac{\log(H/Q)}{\log U}}.
\end{equation}

We now apply partial summation to the weighted sum over $(v_{j-1},v_j]$. Written out in full, partial summation gives
\begin{align*}
&\sum_{\substack{v_{j-1}<n\le v_j\\n\in W_\ell(U)\\n\equiv b\bmod Q}}
h_\ell'(n)G(n)=
G(v_j)
\sum_{\substack{v_{j-1}<n\le v_j\\n\in W_\ell(U)\\n\equiv b\bmod Q}}
h_\ell'(n)
-\int_{v_{j-1}}^{v_j}
\sum_{\substack{v_{j-1}<n\le t\\n\in W_\ell(U)\\n\equiv b\bmod Q}}
h_\ell'(n)G'(t)\,dt.
\end{align*}
There is no lower-endpoint term because the inner sum vanishes when $t=v_{j-1}$. We insert \eqref{eq:major-short-sieve} into this identity. The contribution of the main term in \eqref{eq:major-short-sieve} is
\begin{align*}
&~\quad\frac1Q
\prod_{\substack{p\le U\\p^{\gamma_\ell(p)}\nmid Q}}
\left(1-\frac{j_\ell(p)}{p^{\gamma_\ell(p)}}\right)
\cdot
\left(
\bigl(h_\ell(v_j)-h_\ell(v_{j-1})\bigr)G(v_j)
-\int_{v_{j-1}}^{v_j}
\bigl(h_\ell(t)-h_\ell(v_{j-1})\bigr)G'(t)\,dt
\right)\\
&=
\frac1Q
\prod_{\substack{p\le U\\p^{\gamma_\ell(p)}\nmid Q}}
\left(1-\frac{j_\ell(p)}{p^{\gamma_\ell(p)}}\right)
\int_{v_{j-1}}^{v_j}h_\ell'(t)G(t)\,dt,
\end{align*}
where the final equality follows by integration by parts. The contribution of the error term $E_{b,j}(t)$ is
\[
E_{b,j}(v_j)G(v_j)
-\int_{v_{j-1}}^{v_j}E_{b,j}(t)G'(t)\,dt.
\]

It remains to bound this error contribution. We have $|G(v_j)|\le1$, and using the triangle inequality
\begin{align*}
|G'(t)|
&\leq 
\left|\frac{h_\ell'(t)}Xw'\left(\frac{h_\ell(t)}X\right)e(-\theta h_\ell(t))\right|
+\left|2\pi i\theta h_\ell'(t)w\left(\frac{h_\ell(t)}X\right)e(-\theta h_\ell(t))\right|\\
&\ll\frac{h_{\ell}'(t)}{X}(1+|\theta|X)\\
&\ll R\frac{h_\ell'(t)}{X}.
\end{align*}
Since $h_\ell''>0$ and $h_\ell'(Y)\ll_hX/Y$ using \eqref{eq:hl' bound},
\begin{equation}\label{eq:major-block-change}
\int_{v_{j-1}}^{v_j}|G'(t)|\,dt
\ll
\frac{R}{X}\bigl(h_\ell(v_j)-h_\ell(v_{j-1})\bigr)
\le
\frac{RH}{X}h_\ell'(Y)
\ll_h1.
\end{equation}
It follows from \eqref{eq:Ebj bound} that the absolute value of the contribution of $E_{b,j}(t)$ is at most
\[
\sup_{v_{j-1}\le t\le v_j}|E_{b,j}(t)|
\left(
|G(v_j)|
+\int_{v_{j-1}}^{v_j}|G'(t)|\,dt
\right)
\ll_h
\frac{XH}{QY}
e^{-c'\frac{\log(H/Q)}{\log U}}.
\]
We have therefore shown that
\[
\sum_{\substack{v_{j-1}<n\le v_j\\n\in W_\ell(U)\\n\equiv b\bmod Q}}
h_\ell'(n)G(n)
=
\frac1Q
\prod_{\substack{p\le U\\p^{\gamma_\ell(p)}\nmid Q}}
\left(1-\frac{j_\ell(p)}{p^{\gamma_\ell(p)}}\right)
\int_{v_{j-1}}^{v_j}h_\ell'(t)G(t)\,dt
+O_h\left(
\frac{XH}{QY}
e^{-c'\frac{\log(H/Q)}{\log U}}
\right).
\]
Summing over $1\le j\le R$ and using $RH=Y-1$, we obtain
\begin{equation}\label{eq:major-residue-class}
\sum_{\substack{1<n\le Y\\n\in W_\ell(U)\\n\equiv b\bmod Q}}
h_\ell'(n)G(n) 
=
\frac1Q
\prod_{\substack{p\le U\\p^{\gamma_\ell(p)}\nmid Q}}
\left(1-\frac{j_\ell(p)}{p^{\gamma_\ell(p)}}\right)
\int_1^Yh_\ell'(t)G(t)\,dt
+O_h\left(
\frac XQe^{-c'\frac{\log(H/Q)}{\log U}}
\right).
\end{equation}
The substitution $u=h_\ell(t)/X$ gives
\[
\int_1^Yh_\ell'(t)G(t)\,dt
=X\int_{h_\ell(1)/X}^1w(u)e(-\theta Xu)\,du
=X\widehat w(\theta X)+O_h(\ell^{k-1}),
\]
since $h_\ell(1)\ll_h\ell^{k-1}$. Hence, for every $b\in W_\ell^Q(U)$,
\begin{equation}
\sum_{\substack{1<n\le Y\\n\in W_\ell(U)\\n\equiv b\bmod Q}}
h_\ell'(n)w\left(\frac{h_\ell(n)}X\right)e(-\theta h_\ell(n))=
\frac XQ\widehat w(\theta X)
\prod_{\substack{p\le U\\p^{\gamma_\ell(p)}\nmid Q}}
\left(1-\frac{j_\ell(p)}{p^{\gamma_\ell(p)}}\right)
+O_h\left(\frac{\ell^{k-1}}Q+
\frac XQe^{-c'\frac{\log(H/Q)}{\log U}}
\right).
\label{eq:major-residue-class-final}
\end{equation}
We now write the expression for $\widehat{g_{X,h_\ell}}\left(\frac aq+\theta\right)$ as in \eqref{eq:ghat}.
If $n\equiv b\pmod Q$, then $e\left(-ah_\ell(n)/q\right)
=e\left(-Ah_\ell(b)/Q\right).$
Splitting into residue classes modulo $Q$, applying \eqref{eq:major-residue-class-final}, and bounding the term $n=1$ separately, we obtain
\begin{align}
\widehat{g_{X,h_\ell}}\left(\frac aq+\theta\right)
&=
2J_{U,\ell}\Re\Bigg[
\frac XQ\widehat w(\theta X)
\prod_{\substack{p\le U\\p^{\gamma_\ell(p)}\nmid Q}}
\left(1-\frac{j_\ell(p)}{p^{\gamma_\ell(p)}}\right)\cdot
\sum_{\substack{b\bmod Q\\b\in W_\ell^Q(U)}}
e\left(-\frac{Ah_\ell(b)}Q\right)
\Bigg] \notag\\
&\quad
+O_h\left(J_{U,\ell}\ell^{k-1}+
J_{U,\ell}Xe^{-c'\frac{\log(H/Q)}{\log U}}
\right).
\label{eq:major-completed}
\end{align}
We first bound the error terms. Using $\ell\le \exp\big((\log X)^{(2+\epsilon)/(2+2\epsilon)}\big)$ and Lemma~\ref{J_USmall} for the first term and using \eqref{eq:major-short-sieve-range} for the second term, we get
\[ J_{U,\ell}\ell^{k-1}+J_{U,\ell}Xe^{-c'\frac{\log(H/Q)}{\log U}}\ll X^{o_{h,\epsilon}(1)}+J_{U,\ell}Xe^{-\frac{c'}{2}\frac{\log(Y/q)}{\log U}}\ll J_{U,\ell}X e^{-c_2\frac{\log(Y/q)}{\log U}}.\] 
Thus this term has the required form after decreasing $c_2$ if necessary.

By the definition of $J_{U,\ell}$,
\[
J_{U,\ell}
\prod_{\substack{p\le U\\p^{\gamma_\ell(p)}\nmid Q}}
\left(1-\frac{j_\ell(p)}{p^{\gamma_\ell(p)}}\right)
=
\prod_{\substack{p\le U\\p^{\gamma_\ell(p)}\mid Q}}
\left(1-\frac{j_\ell(p)}{p^{\gamma_\ell(p)}}\right)^{-1}.
\]
The primes with $\gamma_\ell(p)>1$ contribute $O_h(1)$ as in the proof of Lemma~\ref{complete-denominator-bound}, since $j_\ell(p)\le p^{\gamma_\ell(p)-1}(p-1)$. The primes $p\le2k$ also contribute $O_h(1)$. For every remaining prime, $\gamma_\ell(p)=1$, $p>2k$, and $j_\ell(p)\le k-1$, so $\left(1-j_\ell(p)/p\right)^{-1}<2.$
Consequently, for every $\eta>0$,
\[
J_{U,\ell}
\prod_{\substack{p\le U\\p^{\gamma_\ell(p)}\nmid Q}}
\left(1-\frac{j_\ell(p)}{p^{\gamma_\ell(p)}}\right)
\ll_{h,\eta}2^{\omega(Q)}\ll_\eta Q^\eta.
\]

Lemma~\ref{bounded-completion-gauss} gives
\[
\left|
\sum_{\substack{b\bmod Q\\b\in W_\ell^Q(U)}}
e\left(-\frac{Ah_\ell(b)}Q\right)
\right|
\ll_{h,\eta}
\begin{cases}
Q^{1/2+\eta},&\text{if }Q\le U,\\
Q^{1-1/k+\eta},&\text{if }Q\ge U.
\end{cases}
\]
Since $Q\asymp_hq$ and $|\widehat w(\theta X)|\ll e^{-\sqrt{|\theta|X/2}}$, choosing $\eta>0$ sufficiently small in terms of $\epsilon$ and applying \eqref{eq:major-completed} proves the first estimate. If $Q=q^\sharp_{\ell,U}\le U$, the square-root estimate for the complete sum gives $q^{-1/(2+\epsilon)}$ in place of $q^{-1/(k+\epsilon)}$.
\end{proof}

\section{The classical Fourier mass}\label{inifouma}

Suppose $A\subset[X]$ and $(A-A)\cap I(h_\ell)=\emptyset$. Also, set $\alpha:=|A|/X$. Suppose $\ell\le \exp\big((\log X)^{(2+\epsilon)/(2+2\epsilon)}\big)$  and let $\alpha$ satisfy
\[
\alpha\gg_{h} (\log X)^{2ek}\exp\!\Big(-(\log X)^{1/(2+2\epsilon)}\Big).
\]
Our goal in this section is to show that either $A$ enjoys one of the density increments in Proposition 
\ref{density-increment-calc} or $\widehat{\1_A}$ has $L^2$-Fourier mass on suitably chosen major arcs, following the classical ideas of S\'ark\"ozy. To this end, we must define the major and minor arcs. Recall that $L=\log(1/\alpha)$, and define 
\[
\tau:=C_1L^2 X^{-1},
\]
where $C_1$ is a positive constant to be determined later. Now, define the \emph{major arcs} $\mathfrak{M}\subseteq \R/\Z$ by 
\[
\mathfrak M
:=
\bigcup_{1\le q\le C_1\alpha^{-2-\epsilon}}
\ \bigcup_{\substack{0\le a<q\\(a,q)=1}}
\Big\{\xi\in\R/\Z:\ \Big\|\xi-\frac{a}{q}\Big\|_{\mathbb T}\le \tau\Big\}.
\]
The \emph{minor arcs} are given by $\mathfrak m=\mathfrak M^c$. 
The goal of this section is to prove the following.
\begin{lem}\label{InitialMassDichotomy}
Suppose $A\subset[X]$ and $(A-A)\cap I(h_\ell)=\emptyset$. Also, set $\alpha=|A|/X$ and suppose $\alpha\le2/3$. Suppose \eqref{ell-quasi-bound} holds and let $\alpha$ satisfy
\[
\alpha\gg_{h} (\log X)^{2ek}\exp\!\Big(-(\log X)^{1/(2+2\epsilon)}\Big),
\]
where we assume the implied constant is suitably large. Either there exists $\xi\in\R$ such that $|\xi|\le\tau$ and
\begin{equation}\label{InitialMass}
\sum_{2\leq q\leq C_1 \alpha^{-2-\epsilon}} q^{-1/(2+\epsilon)}
\sum_{\substack{1\leq a<q\\ (a,q)=1}}
|\widehat{\1_A}(a/q+\xi)|^2\gg_{h,\epsilon} \alpha^2X^2,
\end{equation}
or there is a progression $P\subseteq [X]$ of common difference $1$ and length $|P|\gg_{h,\epsilon} \frac{X}{L^2}$
on which the density of $A$ is at least $(1+c_{h,\epsilon})\alpha.$
\end{lem}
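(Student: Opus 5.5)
We argue in the style of S\'ark\"ozy, counting solutions to $x-y=h_\ell(n)$ weighted by $g:=g_{X',h_\ell}$ at a scale $X'=cX$ with $c$ small. The scale is chosen so that the support of $g$ is contained in $[-X/4,X/4]$.

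Since $(A-A)\cap I(h_\ell)=\emptyset$ and $g$ is supported on $\pm I(h_\ell)$, we have
\[
\sum_{x,y}\1_A(x)\1_A(y)g(x-y)=0,
\qquad\text{equivalently}\qquad
\int_{\mathbb T}|\widehat{\1_A}(\xi)|^2\widehat g(\xi)\,d\xi=0.
\]
Write $\1_A=\alpha\1_{[X]}+f_A$ and expand the left-hand side. The main term $\alpha^2\sum_{x,y\in[X]}g(x-y)$ has size $\gg\alpha^2X\cdot X'$, since $\sum_m g(m)\asymp X'$; this follows from Proposition~\ref{ricebrunsum2}, or equivalently from the major arc estimate Lemma~\ref{Rishika} at $q=1$, $\theta=0$, together with the normalisation by $J_{U,\ell}$.

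The cross terms $\alpha\sum f_A(x)(\1_{[X]}*g)(x)$ are handled by a physical-space argument, which is the content of the physical-space estimate in the dependency graph. If they are large and negative, then $A$ has density $\ge(1+c)\alpha$ on an interval of length $\gg X/L^2$; otherwise they are at most a small multiple of $\alpha^2XX'$. The intervals of length $X/L^2$ come from the fact that $\1_{[X]}*g$ is nearly constant away from the ends of $[X]$: we discard boundary regions of length about $X'$ and pigeonhole. So, absent such an increment,
\[
\Big|\int_{\mathbb T}|\widehat{f_A}(\xi)|^2\widehat g(\xi)\,d\xi\Big|\gg\alpha^2XX'.
\]

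Next we split $\mathbb T$ into $\mathfrak M$ and $\mathfrak m$. On $\mathfrak m$ we use the minor arc bound: by Lemma~\ref{Mukul} in contrapositive, if $|\widehat g(\xi)|\ge\delta X'$ with $\delta\approx\alpha^{2+\epsilon}/C$, then $\xi$ lies within $\delta^{-K-k}X^{-1+o(1)}$ of some $a/q$ with $q\le\delta^{-K}X^{o(1)}$. Lemma~\ref{Rishika} then forces $q\le C_1\alpha^{-2-\epsilon}$ through the factor $q^{-1/(2+\epsilon)}$ (we use the stronger version, since $q^\sharp\ll q\le U$ for $\alpha$ in the given range). It also forces $\|\xi-a/q\|\le\tau$ through the decay $e^{-\sqrt{|\theta|X'/2}}$, because $\sqrt{C_1L^2}\gg L$. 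Hence $\sup_{\mathfrak m}|\widehat g|\le c\alpha^2X'$, and by Parseval the minor arcs contribute at most $c\alpha^2X'\cdot\alpha X\le c\alpha^2XX'$.

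The hypothesis on the size of $\alpha$ is exactly what lets the error terms $J_{U,\ell}Xe^{-c_2\log(Y/q)/\log U}$ and the thresholds of Lemma~\ref{Mukul} be absorbed. On $\mathfrak M$ we bound $|\widehat g(a/q+\theta)|\ll X'q^{-1/(2+\epsilon)}$ plus a negligible error. The $q=1$ arc can be handled by a separate observation: on $|\xi|\le\tau$, a large $|\widehat{f_A}(\xi)|$ gives the interval increment via Lemma~\ref{increment}, or the $q=1$ contribution is small. This yields
\[
\sum_{2\le q\le C_1\alpha^{-2-\epsilon}}q^{-1/(2+\epsilon)}\sum_{(a,q)=1}\int_{|\xi|\le\tau}|\widehat{f_A}(a/q+\xi)|^2\,d\xi\gg\alpha^2X.
\]
Pigeonholing a single $\xi$ with $|\xi|\le\tau$ gains the factor $\tau\cdot X=C_1L^2$, which we must avoid losing.

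\textbf{Main obstacle.} The hardest step is this last averaging: passing from the integral to a single $\xi$ without losing $L^2$. I would try to fix it by using the Fourier decay of $\widehat g$ more carefully, which confines the mass to $|\theta|\ll X^{-1}$ at a loss of only constants. Alternatively, as in Green--Sawhney, I would use the large sieve bound $\int_{|\xi|\le\tau}\le 2\tau\sup$ combined with the fact that on $[-X,X]$ the function $\widehat{f_A}$ is essentially constant at scale $1/X$; then pigeonholing over about $L^2$ pieces costs a factor $L^2$, which is recovered since only $|\theta|\lesssim 1/X'$ carries the mass. Finally we replace $\widehat{f_A}$ by $\widehat{\1_A}$, which is harmless for $a/q\ne0$ with $q\ge2$ because $\widehat{\1_{[X]}}(a/q+\xi)\ll q$ is negligible.
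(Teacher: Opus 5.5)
Your overall architecture is sound and agrees with the paper from the Fourier step onward: minor arcs via Lemma~\ref{Mukul} and Lemma~\ref{Rishika}, the $q=1$ arc sent to Lemma~\ref{increment} with $T\ll L^2$, and the $q\ge2$ arcs giving the weighted mass. Your physical-space step, however, is a genuinely different route.

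\textbf{The paper's route.} It keeps the scale $X$ and compares $\sum\1_A(x)\1_A(y)g(x-y)=0$ with the one-sided count $\sum\1_A(x)\1_{[X]}(y)g(x-y)$. This is Lemma~\ref{physest}: restrict to $Y_1<n\le Y_2$, evaluate $J_{U,\ell}\sum h_\ell'(n)$ with Proposition~\ref{ricebrunsum2}, pigeonhole an $n$, and find $A$ dense on $(X-h_\ell(n),X]$. It therefore works with $\widehat{\1_A}(\xi)\widehat{f_A}(-\xi)$. It swaps $f_A$ for $\1_A$ on the $q\ge2$ arcs inside the integral, using $\|g_{X,h_\ell}\|_2^2\le X^{2-1/k+o(1)}$.

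\textbf{Your route.} You use the symmetric S\'ark\"ozy decomposition at scale $X'=cX$. Since $\sum_x f_A(x)=0$, the cross terms equal $-2\alpha\sum_x f_A(x)D(x)$. Here $D\ge0$ is the boundary deficit of $\1_{[X]}*g$; it is supported within $X'$ of the endpoints and bounded by $\frac12\sum_m g(m)$.
\begin{itemize}
\item This works. What you need is only that the cross terms are not very negative; large positive cross terms help.
\item Done directly, it gives density $\ge2\alpha$ on an interval of length $\asymp X'\gg X$ for $c$ small. So your ``pigeonhole'' explanation of the length $X/L^2$ is off: that length only arises from the $q=1$ arc.
\item Your pointwise swap at the end, $|\widehat{f_A}|^2\le2|\widehat{\1_A}|^2+2\alpha^2|\widehat{\1_{[X]}}|^2$ with $|\widehat{\1_{[X]}}(a/q+\xi)|\ll q$, costs $\ll\alpha^2Q^4=X^{o(1)}$. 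It is simpler than the paper's swap.
\end{itemize}
The two routes are equally effective; yours avoids the one-sided count at the cost of rescaling $g$.

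\textbf{The flagged obstacle.} The step you call the main obstacle is not one: your first suggestion is exactly the paper's argument.
\begin{itemize}
\item Do not replace $|\widehat g(a/q+\theta)|$ by $X'q^{-1/(2+\epsilon)}$. Keep the bound $q^{-1/(2+\epsilon)}X'e^{-\sqrt{|\theta|X'/2}}$ plus the sieve error. That error is $o(\alpha X')$ uniformly, so by Parseval it contributes $o(\alpha^2XX')$.
\item Since $\int_{\R}X'e^{-\sqrt{|\theta|X'/2}}\,d\theta=O(1)$, the weighted $\theta$-integral is at most $O(1)$ times the supremum over $|\theta|\le\tau$ of $\sum_{q,a}q^{-1/(2+\epsilon)}|\widehat{f_A}(a/q+\theta)|^2$. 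This yields a single $\xi$ with no loss, so the large-sieve/locally-constant alternative is unnecessary.
\item The same integration, $\int_{|\xi|\le\tau}|\widehat g|\ll1$, is needed on the $q=1$ arc. With the crude bound $|\widehat g|\ll X'$ you would only get $\sup_{|\xi|\le\tau}|\widehat{f_A}|\gg\alpha X/L^2$. That gives an increment by a factor $1+O(L^{-2})$ instead of $1+c_{h,\epsilon}$.
\end{itemize}

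\textbf{Two slips on the minor arcs.}
\begin{itemize}
\item The threshold must be $\delta\asymp\alpha$, not $\alpha^{2+\epsilon}$. You only need $\sup_{\mathfrak m}|\widehat g|\le c\alpha X'$, because $\|f_A\|_2^2\le\alpha X$. With $\delta\asymp\alpha^{2+\epsilon}$, Lemma~\ref{Rishika} would only give $q\ll\alpha^{-(2+\epsilon)^2}$, which lies outside $\mathfrak M$.
\item The $q$ produced by Lemma~\ref{Mukul} is only $\ll\delta^{-K}\ell^{k-1}X^{o(1)}$, and this can exceed $U$ since $\ell$ may be as large as $\exp((\log X)^{(2+\epsilon)/(2+2\epsilon)})$. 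You must first apply the weaker bound of Lemma~\ref{Rishika} to get $q\ll\alpha^{-k-\epsilon}\le U$. Only then is the stronger $q^{-1/(2+\epsilon)}$ bound available, as in the paper.
\end{itemize}
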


Before proving Lemma \ref{InitialMassDichotomy}, we need to prove two estimates. First, we have the following minor arc estimate, which in words says that $\widehat{g_{X,h_\ell}}$ is uniformly small on $\mathfrak{m}$. 

\begin{lem}[Minor arc estimate] \label{lem:minor_arc_estimate}
There exists a constant $F_h>0$ such that the following holds. Suppose \eqref{ell-quasi-bound} holds and let $0<\alpha\leq 2/3$ satisfy
\begin{equation}\label{eq:alphabig}
    \alpha\geq  F_h(\log X)^{2ek}\exp\!\Big(-(\log X)^{1/(2+2\epsilon)}\Big).
\end{equation}

Then, we have $\sup_{\xi\in \mathfrak m} |\widehat{g_{X,h_\ell}}(\xi)|\leq 2^{-9}\alpha X$.
\end{lem}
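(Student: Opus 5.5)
We argue by contraposition, as the overview indicates. Fix $\xi\in\R/\Z$ with $|\widehat{g_{X,h_\ell}}(\xi)|>2^{-9}\alpha X$, and aim to show $\xi\in\mathfrak M$. Set $\delta:=2^{-9}\alpha$. Lemma~\ref{J_USmall} gives $J_{U,\ell}\ll_h(\log U)^{k-1}\le(\log X)^{k}$. Hence, if $F_h$ in \eqref{eq:alphabig} is large enough, the hypothesis $\delta/J_{U,\ell}\ge D_h(\log X)^{ek}\exp(-(\log X)^{1/(2+2\epsilon)})$ of Lemma~\ref{Mukul} holds. That lemma then gives $q_0\in\N$ with
\[
q_0\ll_h\alpha^{-K}X^{o(1)}
\qquad\text{and}\qquad
\|q_0\xi\|_{\mathbb T}\ll_h\alpha^{-K-k}X^{-1+o(1)}.
\]
Because $\alpha\ge\exp(-(\log X)^{1/(2+2\epsilon)+o(1)})$, every power of $\alpha^{-1}$ is $X^{o(1)}$. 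So after reducing $a/q_0$ to lowest terms $a/q$, we get $q<X^{1/(8k)}$ and $\xi=a/q+\theta$ with $|\theta|\le X^{-1+o(1)}\le X^{-1+1/(8k)}$. We also have $\log(Y/q)\gg\log X$, which far exceeds $2c\log U\log\log U=O((\log X)^{1/2}\log\log X)$. Thus Lemma~\ref{Rishika} applies at $a/q+\theta$.

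Next we make the Rishika error term negligible. It is $J_{U,\ell}X\exp(-c_2\log(Y/q)/\log U)\le X\exp(-c(\log X)^{1/2})$. Since $1/(2+2\epsilon)<1/2$, this is much smaller than $2^{-10}\alpha X$ for $X$ large. The case $q=1$, $a=0$ is included: there we only need to bound $|\theta|$.

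Suppose first that $Q=q^\sharp_{\ell,U}\le U$. The stronger estimate gives
\[
2^{-10}\alpha X\ll Xe^{-\sqrt{|\theta|X/2}}q^{-1/(2+\epsilon)}.
\]
This forces $q\ll\alpha^{-2-\epsilon}$ and $\sqrt{|\theta|X/2}\le\log(C/\alpha)$, that is, $|\theta|\le 2X^{-1}\log^2(C/\alpha)\le C_1L^2X^{-1}=\tau$ once $C_1$ is large. Hence $\xi\in\mathfrak M$, provided $C_1$ also dominates the implied constant in $q\ll\alpha^{-2-\epsilon}$. One point needs care: $\epsilon$ in Lemma~\ref{Rishika} may be taken smaller than the $\epsilon$ defining $\mathfrak M$, so that constants are absorbed; note also $L\ge\log(3/2)$.

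The remaining case, $Q>U$, is where I expect the main obstacle. Here only the weaker bound is available, and it gives $q\ll\alpha^{-k-\epsilon}$ instead of $\alpha^{-2-\epsilon}$. The fix I would try first uses that $U=\exp((\log X)^{1/2})$ is enormous compared with the admissible sizes of $q$. Since $Q\asymp_h q$, the condition $Q>U$ forces $q\gg_h U$. The weaker bound then yields
\[
\alpha\ll q^{-1/(k+\epsilon)}\le U^{-1/(k+\epsilon)}=\exp\!\big(-(\log X)^{1/2}/(k+\epsilon)\big),
\]
which contradicts \eqref{eq:alphabig} for large $X$ because $1/(2+2\epsilon)<1/2$. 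So this case cannot occur. (If instead $q$ were already bounded via Lemma~\ref{Mukul} by $X^{o(1)}$ but still exceeded $U$, the same contradiction applies.) This is exactly why $\gamma=1/2$ and the exponent $1/(2+2\epsilon)$ were chosen, so checking this inequality carefully, including the implied constants in $Q\asymp_h q$ and the $2^{\omega(Q)}$ losses, is the step to verify. Combining the two cases gives $\xi\in\mathfrak M$, proving the lemma.
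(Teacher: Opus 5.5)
Your proposal is correct and follows essentially the same route as the paper. You apply Lemma~\ref{Mukul} with $\delta=2^{-9}\alpha$, reduce to $a/q+\theta$ satisfying the hypotheses of Lemma~\ref{Rishika}, show the sieve error term is negligible, and then use the square-root complete-sum bound to force $q\le C_1\alpha^{-2-\epsilon}$ and $|\theta|\le\tau$. The only difference is organizational. The paper first applies the weaker estimate unconditionally to get $q\ll\alpha^{-k-\epsilon}$, deduces $q^\sharp_{\ell,U}\le U$, and then applies the stronger estimate. You instead split into the cases $Q\le U$ and $Q>U$ and rule out $Q>U$ using that same weaker estimate, which is logically equivalent.
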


\begin{proof} 
Suppose $|\widehat{g_{X,h_\ell}}(\xi)|> 2^{-9}\alpha X.$ We show $\xi\in\mathfrak{M}$.
Since $J_{U,\ell}\ll_h(\log U)^{k-1}$ by Lemma \ref{J_USmall}, the lower bound on $\alpha$ allows us to apply Lemma \ref{Mukul} with $\delta=2^{-9}\alpha$, assuming we choose $F_h$ large enough. Thus there exists $q_0\in\N$ such that 
\[
q_0\ll_{h} \alpha^{-K}\ell^{k-1}X^{o_{h,\epsilon}(1)},
\qquad
\|q_0\xi\|_{\mathbb T}\ll_{h}\alpha^{-K-k}\ell^{2(k-1)}X^{-1+o_{h,\epsilon}(1)}.
\]
By the second estimate, there exist $a_0\in\Z$ and $\theta\in\R$ such that $\xi=\frac{a_0}{q_0}+\theta$ with
\[
|\theta|\ll_h\alpha^{-K-k}\ell^{2(k-1)}X^{-1+o_{h,\epsilon}(1)}.
\]
After reducing the fraction $a_0/q_0$, we may write it as $a/q$ with $(a,q)=1$ and $q\le q_0$. Using the lower bound assumption for $\alpha$, $\ell\le \exp\big((\log X)^{(2+\epsilon)/(2+2\epsilon)}\big)$, $Y=X^{1/k-o_{h,\epsilon}(1)}$, and $U=\exp((\log{X})^{1/2})$ we conclude
\begin{equation}\label{eq:qthetay}
    q<X^{o_{h}(1)},\qquad |\theta|\le X^{-1+o_{h,\epsilon}(1)},\qquad
\log(Y/q)\ge 2c\log U\log\log U,
\end{equation}

which are the conditions required to apply Lemma \ref{Rishika}. Thus the weaker conclusion of Lemma \ref{Rishika} gives
\[
2^{-9}\alpha X<\Bigl|\widehat{g_{X,h_\ell}}\Bigl(\frac{a}{q}+\theta\Bigr)\Bigr|
\ \ll_{h,\epsilon}\
Xe^{-\sqrt{|\theta|X/2}}q^{-1/(k+\epsilon)}
+
J_{U,\ell}Xe^{-c_2\frac{\log(Y/q)}{\log U}}.
\]
We bound the last term using Lemma \ref{J_USmall} and \eqref{eq:qthetay},  \[J_{U,\ell}Xe^{-c_2\frac{\log(Y/q)}{\log U}}\ll_{h,\epsilon} (\log{X})^{(k-1)/2}X\exp\left(\frac{-c_2}{2k}(\log{X})^{1/2}\right)=o(\alpha X),\]
where the last equality follows from \eqref{eq:alphabig}.
Hence 
\begin{align*}
    2^{-9}\alpha X&\ll_{h,\epsilon}Xe^{-\sqrt{|\theta|X/2}}q^{-1/(k+\epsilon)}\\
    \implies\alpha&\ll_{h,\epsilon} e^{-\sqrt{|\theta|X/2}}q^{-1/(k+\epsilon)}
\end{align*}

After rearranging, we have both
\[
|\theta|\ll_{h,\epsilon} L^2X^{-1}
\qquad\text{and}\qquad
q\ll_{h,\epsilon} \alpha^{-k-\epsilon}.
\]
By Lemma \ref{complete-denominator-bound}, $q^\sharp_{\ell,U}\ll_h q$. Since
\[
\alpha\gg_{h} (\log X)^{2ek}\exp\!\Big(-(\log X)^{1/(2+2\epsilon)}\Big)
\]
and $\frac{1}{2+2\epsilon}<\gamma$, it follows that $q^\sharp_{\ell,U}\le U$ for sufficiently large $X$. Therefore the stronger conclusion of Lemma \ref{Rishika} applies, giving
\[
\alpha \ll_{h,\epsilon} e^{-\sqrt{|\theta|X/2}}q^{-1/(2+\epsilon)}.
\]
Rearranging yields
\[
q\le C_1\alpha^{-2-\epsilon},
\]
provided $C_1$ is chosen sufficiently large. Since $|\theta|\ll_{h,\epsilon} L^2X^{-1}\le \tau$ once $C_1$ is large enough, we conclude that $\xi\in\mathfrak{M}$ as required. 
\end{proof} 


Next, we prove the following important physical space estimate before we prove Lemma \ref{InitialMassDichotomy}. 
\begin{lem}\label{physest}
Either 
\begin{equation}\label{Xsum}
\sum_{x,y}\1_A(x)\1_{[X]}(y)g_{X,h_\ell}(x-y)> 2^{-8}\alpha X^2,
\end{equation}
or there is an interval $P\subseteq[X]$ with $|P|\gg X$ on which the
density of $A$ is at least $5\alpha/4$.
\end{lem}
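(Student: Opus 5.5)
We compare the sum in \eqref{Xsum} with the total mass of $g_{X,h_\ell}$. The quantity $\sum_{x,y}\1_A(x)\1_{[X]}(y)g_{X,h_\ell}(x-y)$ counts, with weight $g$, pairs $(x,y)$ with $x\in A$, $y\in[X]$ and $x-y\in\pm I(h_\ell)$. Since $g$ is even and nonnegative, we can write it as $\sum_{x\in A}\sum_{d}g(d)\1_{[X]}(x-d)$. For $x$ in the middle of $[X]$ the inner sum is essentially all of the mass of $g$ supported on $|d|\le X/2$. The plan has three steps: compute $\sum_d g(d)$ via the sieve estimate, restrict $x$ to a middle interval, and use the dichotomy that either $A$ has roughly its expected density on that middle interval, or some subinterval of length $\gg X$ carries density $\ge 5\alpha/4$.

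\textbf{Step 1 (mass of $g$).} Using Proposition~\ref{ricebrunsum2} with $q=1$ (trivially $(\ell,U)$-complete), $b=0$, and partial summation against $w(h_\ell(t)/X)$, exactly as in the proof of Lemma~\ref{Rishika} with $\theta=0$, we obtain
\[
\sum_{n\in W_\ell(U)}h_\ell'(n)w\Bigl(\frac{h_\ell(n)}{X}\Bigr)=J_{U,\ell}^{-1}X\widehat w(0)+o(J_{U,\ell}^{-1}X).
\]
The error is negligible because $J_{U,\ell}\ll(\log U)^{k-1}$. Restricting instead to $h_\ell(n)\in[X/4,X/2]$, we apply the same argument to a smooth minorant, or directly to the short-interval sieve sum on $[Y_{1/4},Y_{1/2}]$ with $h_\ell(Y_s)=sX$, and use $w\ge 1/2$ on $[1/4,3/4]$. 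This gives
\[
\sum_{X/4\le |d|\le X/2}g(d)\ge 2\cdot\tfrac12\cdot\tfrac{X}{4}(1-o(1))\ge X/5.
\]
The factor $J_{U,\ell}$ in the definition of $g$ cancels the sieve density, which is why this lower bound is uniform in $\ell$ and $U$.

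\textbf{Step 2 (positivity and restriction).} All terms are nonnegative, so we may discard the $d$ outside this range and count only those $x\in A$ for which $x-d\in[X]$ for a suitable sign of $d$. For $x\in A$ and $|d|\in[X/4,X/2]$, at least one of $x-|d|$ and $x+|d|$ lies in $[X]$: the first if $x>X/2$, the second if $x\le X/2$. Hence every $x\in A$ gets a contribution at least $\sum_{X/4\le d\le X/2}g(d)\ge X/10$. Summing over $x\in A$ gives a total of at least $\alpha X^2/10>2^{-8}\alpha X^2$, and \eqref{Xsum} holds outright without needing the increment alternative.

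The main obstacle is Step 1, namely a rigorous lower bound for the weighted mass over $W_\ell(U)$ uniform in $\ell$. This requires checking the hypothesis $\log(H/q)\ge c\log U\log\log U$ of Proposition~\ref{ricebrunsum2} with $q=1$ and $H\asymp Y=X^{1/k-o(1)}$, which holds since $\log U=(\log X)^{1/2}$. It also requires the error $O(X e^{-c'\log Y/\log U})$ to be $o(J_{U,\ell}^{-1}X)$, which follows from Lemma~\ref{J_USmall}. If the endpoint handling in Step 2 turns out to be delicate, the fallback is the standard dichotomy: if $A\cap[X/4,3X/4]$ has density $<\alpha/2$ there, then one of $[1,X/4]$ or $(3X/4,X]$ has density $\ge 5\alpha/4$, which gives the interval $P$.
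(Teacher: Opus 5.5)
Your argument is correct, and it proves more than the lemma asks. Under the standing assumptions of Section~\ref{inifouma}, the first alternative \eqref{Xsum} always holds. In particular \eqref{ell-quasi-bound} gives $h_\ell(1)<X/4$, so $Y_{1/4}>1$ and Proposition~\ref{ricebrunsum2} applies with $q=1$, $v=Y_{1/4}$; you should say this explicitly.

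The paper instead assumes \eqref{Xsum} fails. It discards the pairs with $y=x-h_\ell(n)$ and keeps only $y=x+h_\ell(n)$, so each $n$ sees only $|A\cap[X-h_\ell(n)]|$. This one-sided count can be small when $A$ is concentrated near the top of $[X]$. The paper therefore pigeonholes over $n\in(Y_1,Y_2]$ using the sieve mass \eqref{W_int_short}, and extracts the increment on $(X-h_\ell(n),X]$.

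You exploit the evenness of $g_{X,h_\ell}$ together with $h_\ell(n)\le X/2$. For each $x\in A$, whichever of $x\mp h_\ell(n)$ lies in $[X]$ collects the one-sided mass $J_{U,\ell}\sum_{n\in W_\ell(U),\,Y_{1/4}\le n\le Y_{1/2}}h_\ell'(n)w(h_\ell(n)/X)\ge(1/8-o(1))X$. So the sum is at least $\alpha X^2/10$ with no case split.

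Your route is shorter. It also shows that the increment branch of Lemma~\ref{physest} is never triggered, and hence neither is the corresponding appeal to it in Lemma~\ref{InitialMassDichotomy}. The paper's one-orientation argument does not use the symmetry of $g$, so it would adapt to a weight supported on one sign only. The full-mass computation via $\widehat w(0)$ in your Step 1 and the fallback dichotomy at the end are not needed.
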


\begin{proof}
Suppose that
\begin{equation}\label{Xsum_fail}
\sum_{x,y}\1_A(x)\1_{[X]}(y)g_{X,h_\ell}(x-y)\le 2^{-8}\alpha X^2.
\end{equation}
We will show $A$ has a strong density increment. As usual, let $Y>0$ be defined by $h_\ell(Y)=X$.
Since $g_{X,h_\ell}\ge 0$, from \eqref{Xsum_fail} we may discard the contributions coming from
$y=x-h_\ell(n)$ and keep only those coming from $y=x+h_\ell(n)$, obtaining
\begin{align}
2^{-8}\alpha X^2
&\ge \sum_{x,y}\1_A(x)\1_{[X]}(y)g_{X,h_\ell}(x-y)\notag\\
&\ge J_{U,\ell}\sum_{x\in\mathbb Z}\sum_{n\in W_\ell(U)}\1_A(x)\1_{[X]}(x+h_\ell(n))h_\ell'(n)
w\Big(\frac{h_\ell(n)}{X}\Big).\label{orient_new}
\end{align}
Restricting to $1\le n\le Y$ and swapping the order of summation gives
\begin{equation}\label{swap_new}
J_{U,\ell}\sum_{\substack{1\le n\le Y\\n\in W_\ell(U)}} h_\ell'(n)w\Big(\frac{h_\ell(n)}{X}\Big)|A\cap[X-h_\ell(n)]|
\le 2^{-8}\alpha X^2.
\end{equation}
Using $\ell\le \exp\big((\log X)^{(2+\epsilon)/(2+2\epsilon)}\big)$, $h_\ell(1)\ll_h\ell^{k-1}=X^{o_{h,\epsilon}(1)}<X/4$ for $X$ sufficiently large. Since $h_\ell$ is continuous and strictly increasing on $[1,\infty)$ and $h_\ell(t)\to\infty$ as $t\to\infty$, there are unique $Y_1,Y_2>1$ such that
\[
h_\ell(Y_1)=\frac{X}{4},
\qquad
h_\ell(Y_2)=\frac{3X}{4}.
\]
Then $1/4<h_\ell(n)/X\leq3/4$ for $Y_1<n\leq Y_2$. Thus, the support properties of $w$ give $w(h_\ell(n)/X)\ge 1/2$ for $Y_1<n\leq Y_2$. Therefore \eqref{swap_new} implies
\begin{equation}\label{swap_restricted}
J_{U,\ell}\sum_{\substack{Y_1<n\leq Y_2\\n\in W_\ell(U)}} h_\ell'(n)|A\cap[X-h_\ell(n)]|\le 2^{-7}\alpha X^2.
\end{equation}
By Lemma~\ref{lem:invert_hl}, we have $Y_2-Y_1\asymp_hY$. Thus the short-interval range in Proposition~\ref{ricebrunsum2} holds with $q=1$, $v=Y_1$, and $H=Y_2-Y_1$. Moreover, Lemma~\ref{J_USmall} and the choice of $U$ show that when multiplied by the factor $J_{U,\ell}$, the resulting sieve error is $o_{h,\epsilon}(X)$. Applying the proposition and using the definition of $J_{U,\ell}$ gives
\begin{equation}\label{W_int_short}
J_{U,\ell}\sum_{\substack{Y_1<n\leq Y_2\\n\in W_\ell(U)}} h_\ell'(n)
=\frac{3X}{4}-\frac{X}{4}+o(X)=\frac X2+o(X).
\end{equation}
Combining \eqref{swap_restricted} and \eqref{W_int_short}, the pigeonhole principle gives $n\in (Y_1,Y_2]\cap W_\ell(U)$ such that, for $X$ sufficiently large,
\begin{equation*}\label{small_A}
|A\cap[X-h_\ell(n)]|
\le \left(\frac1{64}+o(1)\right)\alpha X
\le \frac{\alpha X}{63}.
\end{equation*}
Since $|A|=\alpha X$, we get
\[
|A\cap(X-h_\ell(n),X]|
\ge\frac{62}{63}\alpha X
\ge\frac54\alpha h_\ell(n),
\]
where the final inequality follows from $h_\ell(n)\le3X/4$. Since $h_\ell(n)\ge X/4$, this is a stronger density increment on the interval $P=(X-h_\ell(n),X]$ of length $|P|\gg X$, which is better than option \textup{(2)} and \textup{($\ast$)} in Proposition \ref{density-increment-calc}. 
\end{proof}

Finally, we are ready to prove Lemma \ref{InitialMassDichotomy}. 
\begin{proof}[Proof of Lemma \ref{InitialMassDichotomy}]
Set $f_A:=\1_A-\alpha\1_{[X]}$.
The condition $(A-A)\cap I(h_\ell)=\emptyset$, together with the definition of $g_{X,h_\ell}$ gives
\begin{equation*}\label{Asum}
\sum_{x,y}\1_A(x)\1_A(y)g_{X,h_\ell}(x-y)=0.
\end{equation*}
Using the triangle inequality and Lemma \ref{physest}, if the density increment alternative in Lemma \ref{physest} does not hold we have 
\[
\left|\sum_{x,y}\1_A(x)f_{A}(y)g_{X,h_\ell}(x-y)\right|\geq 2^{-8} \alpha^2 X^2.
\]
Using Fourier inversion, we obtain
\begin{equation}\label{fourier-correlation}
\left|\int_0^1\widehat{\1_A}(\xi)\widehat{f_A}(-\xi)\widehat{g_{X,h_\ell}}(\xi)\,d\xi\right|\geq  2^{-8} \alpha^2 X^2.
\end{equation}
By Cauchy--Schwarz, Parseval, and Lemma \ref{lem:minor_arc_estimate}, we have
\[
\left|\int_{\mathfrak m} \widehat{\1_A}(\xi)\widehat{f_A}(-\xi)\widehat{g_{X,h_\ell}}(\xi)\,d\xi\right|
\le \sup_{\xi\in\mathfrak m}\big|\widehat{g_{X,h_\ell}}(\xi)\big|
\left(\int_{\mathbb T}\big|\widehat{\1_A}(\xi)\big|^2\,d\xi\right)^{1/2}
\left(\int_{\mathbb T}\big|\widehat{f_A}(\xi)\big|^2\,d\xi\right)^{1/2}
\le 2^{-9}\alpha^2 X^2.
\]
Combining this with \eqref{fourier-correlation} yields
\begin{equation}\label{eq:major-arcs-mass}
\left|\int_{\mathfrak M} \widehat{\1_A}(\xi) \widehat{f_A}(-\xi) \widehat{g_{X,h_\ell}}(\xi)\,d\xi\right|
\ge 2^{-9}\alpha^2 X^2.
\end{equation}

Write
\[
\mathfrak M = [-\tau,\tau]\ \cup\ \mathfrak M',
\qquad
\mathfrak M' := \bigcup_{\substack{2\le q\le C_1\alpha^{-2-\epsilon}}}\ 
\bigcup_{\substack{1\le a<q\\(a,q)=1}}
\left[\frac{a}{q}-\tau,\ \frac{a}{q}+\tau\right],
\]
where $\tau=C_1L^2X^{-1}$ is as in the definition of the major arcs. Note here that each interval in the definition of $\mathfrak M$ is disjoint. 

From \eqref{eq:major-arcs-mass} we have either
\begin{equation}\label{eq:case1}
\left|\int_{-\tau}^{\tau} \widehat{\1_A}(\xi)\widehat{f_A}(-\xi)\widehat{g_{X,h_\ell}}(\xi)\,d\xi\right|
\ge 2^{-10}\alpha^2X^2,
\end{equation}
or
\begin{equation}\label{eq:case2}
\left|\int_{\mathfrak M'} \widehat{\1_A}(\xi)\widehat{f_A}(-\xi)\widehat{g_{X,h_\ell}}(\xi)\,d\xi\right|
\ge 2^{-10}\alpha^2X^2.
\end{equation}
We handle these two cases differently.

\medskip
\noindent\emph{Case 1: \eqref{eq:case1} holds.}
 Applying the triangle inequality to \eqref{eq:case1}, we get
\[
2^{-10}\alpha^2X^2
\le
\int_{-\tau}^{\tau}
\bigl|\widehat{\1_A}(\xi)\bigr|
\bigl|\widehat{f_A}(-\xi)\bigr|
\bigl|\widehat{g_{X,h_\ell}}(\xi)\bigr|\,d\xi.
\]
Using the trivial bound $|\widehat{\1_A}(\xi)|\le |A|=\alpha X$, we obtain
\[
2^{-10}\alpha^2X^2
\le
\alpha X\Bigl(\sup_{|\xi|\le \tau}\bigl|\widehat{f_A}(\xi)\bigr|\Bigr)
\int_{-\tau}^{\tau}\bigl|\widehat{g_{X,h_\ell}}(\xi)\bigr|\,d\xi,
\]
and hence
\begin{equation}\label{eq:case1-sup}
\Bigl(\sup_{|\xi|\le \tau}\bigl|\widehat{f_A}(\xi)\bigr|\Bigr)
\int_{-\tau}^{\tau}\bigl|\widehat{g_{X,h_\ell}}(\xi)\bigr|\,d\xi
\ \ge\ 2^{-10}\alpha X.
\end{equation}

We now bound the integral. By Lemma \ref{Rishika} with $(a,q)=(0,1)$ and $\theta=\xi$, for $X$ large the conditions to apply Lemma \ref{Rishika} are satisfied due to the definition of $\tau$ and the assumed lower bound on $\alpha$. Thus for all $|\xi|\le \tau$,
\[
\bigl|\widehat{g_{X,h_\ell}}(\xi)\bigr|
\ll_{h,\epsilon}
X e^{-\sqrt{|\xi|X/2}} +
J_{U,\ell}Xe^{-c_2\frac{\log Y}{\log U}}.
\]
Integrating and using the definition of $\tau=C_1L^2X^{-1}$ gives
\[
\int_{-\tau}^{\tau}\bigl|\widehat{g_{X,h_\ell}}(\xi)\bigr|d\xi
\ll_{h,\epsilon}\int_{-\tau}^{\tau}X e^{-\sqrt{|\xi|X/2}\,}d\xi +C_1L^2J_{U,\ell}e^{-c_2\frac{\log Y}{\log U}}\leq\int_{\R} e^{-\sqrt{|\xi|/2}}\,d\xi+o_{h,\epsilon}(1)=O(1) .
\]
Returning to \eqref{eq:case1-sup}, we conclude that
\[
\sup_{|\xi|\le \tau}\bigl|\widehat{f_A}(\xi)\bigr|
\gg_{h,\epsilon}
\alpha X.
\]
Choose $\xi_0\in[-\tau,\tau]$ such that
\[
\big|\widehat{f_A}(\xi_0)\big|\gg_{h,\epsilon}\alpha X.
\]
Applying Lemma~\ref{increment} with $q=1$, $a=0$, $\xi=\xi_0$, and a
sufficiently small fixed $\eta>0$, we obtain an interval
$P\subseteq[X]$ of length $|P|\gg T^{-1}X$ on which the density of $A$ is at least
$(1+c_{h,\epsilon})\alpha$. Since
\[
T=\max(1,|\xi_0|X)\ll L^2,
\]
this interval has length
\[
|P|\gg_{h,\epsilon}\frac{X}{L^2}.
\]
This is the second alternative of Lemma~\ref{InitialMassDichotomy}.

\medskip
\noindent\emph{Case 2: \eqref{eq:case2} holds.}
To handle this case, it is convenient to replace the balanced function $f_A$ by $\1_A$ on $\mathfrak M'$ in \eqref{eq:case2}. To be able to do so, we need to bound 
\begin{align*}
&\left|\int_{\mathfrak M'}\widehat{\1_A}(\xi)\widehat{f_A}(-\xi)\widehat{g_{X,h_\ell}}(\xi)\,d\xi
-\int_{\mathfrak M'}\widehat{\1_A}(\xi)\widehat{\1_A}(-\xi)\widehat{g_{X,h_\ell}}(\xi)\,d\xi\right| \\
&\qquad\le
\sup_{\xi\in\mathfrak M'}\big|\widehat{f_A}(\xi)-\widehat{\1_A}(\xi)\big|
\int_{\mathbb T}\big|\widehat{\1_A}(\xi)\big| \big|\widehat{g_{X,h_\ell}}(\xi)\big|\,d\xi,
\end{align*}
using the triangle inequality and extending the $\xi$-integral to $\mathbb T$.

Since $f_A=\1_A-\alpha\1_{[X]}$, we have
\begin{equation} \label{swap}
\sup_{\xi\in\mathfrak M'}\bigl|\widehat{f_A}(\xi)-\widehat{\1_A}(\xi)\bigr|
=\alpha \sup_{\xi\in\mathfrak M'}\bigl|\widehat{\1_{[X]}}(\xi)\bigr|\ll \alpha \sup_{\xi\in\mathfrak M'}\frac{1}{\|\xi\|_{\mathbb T}}.
\end{equation}
Since $\xi\in\mathfrak M'$, by definition there exist coprime integers $(a,q)=1$ with
$2\le q\le C_1\alpha^{-2-\epsilon}$ and some $\eta\in[-\tau,\tau]$ such that $\xi=\frac aq+\eta.$ 

Moreover, for $X$ sufficiently large we have $q\tau\le \tfrac12$, and hence
\begin{equation*}
    \|\xi\|_{\mathbb T}\ge \Bigl\|\frac aq\Bigr\|_{\mathbb T}-\|\eta\|_{\mathbb T}
\ge \frac1q-\tau \ge \frac1{2q}\gg \alpha^{2+\epsilon}.
\end{equation*}

Plugging this bound into \eqref{swap}, we get
\begin{equation}\label{eq: balanced ind diff}
    \sup_{\xi\in\mathfrak M'}\bigl|\widehat{f_A}(\xi)-\widehat{\1_A}(\xi)\bigr|
\ll \alpha^{-1-\epsilon}=X^{o_{h,\epsilon}(1)}.
\end{equation}

By Cauchy--Schwarz and Parseval,
\[
\int_{\mathbb T}\big|\widehat{\1_A}(\xi)\big| \big|\widehat{g_{X,h_\ell}}(\xi)\big|\,d\xi
\le
\left(\int_{\mathbb T}\big|\widehat{\1_A}(\xi)\big|^2\,d\xi\right)^{1/2}
\left(\int_{\mathbb T}\big|\widehat{g_{X,h_\ell}}(\xi)\big|^2\,d\xi\right)^{1/2}
=|A|^{1/2}\|g_{X,h_\ell}\|_2.
\]
Since $J_{U,\ell}=X^{o_h(1)}$ and $Y=X^{1/k-o_{h,\epsilon}(1)}$, using \eqref{eq:hl' bound} gives
\[
\|g_{X,h_\ell}\|_2^2=2J_{U,\ell}^2
\sum_{\substack{1\le n\le Y\\ n\in W_\ell(U)}}
h_\ell'(n)^2
w\left(\frac{h_\ell(n)}{X}\right)^2 \le 4J_{U,\ell}^2Yh_{\ell}'(Y)^2\ll_h J_{U,\ell}^2Y\left(\frac{X}{Y}\right)^2=X^{2-1/k+o_{h,\epsilon}(1)}.
\]
Consequently the error we get by replacing $f_A$ by $\1_A$  is
\[
\ll X^{o_h(1)}(\alpha X)^{1/2}X^{1-\frac1{2k}+o_{h,\epsilon}(1)}
=o_{h,\epsilon}(\alpha^2X^2),
\]
using the lower bound assumption on $\alpha$ in Lemma \ref{InitialMassDichotomy}. Thus, from \eqref{eq:case2} we may replace $f_A$ by $\1_A$ on $\mathfrak M'$ at negligible cost, obtaining
\[
\left|\int_{\mathfrak M'}\widehat{\1_A}(\xi)\widehat{\1_A}(-\xi)\widehat{g_{X,h_\ell}}(\xi)\,d\xi\right|
\ge 2^{-11}\alpha^2X^2.
\]
This implies
\begin{equation}\label{eq:mass-square}
\int_{\mathfrak M'} \big|\widehat{\1_A}(\xi)\big|^2 \big|\widehat{g_{X,h_\ell}}(\xi)\big|\,d\xi
\ge 2^{-11}\alpha^2X^2.
\end{equation}

We now estimate $|\widehat{g_{X,h_\ell}}(\xi)|$ on each major arc interval using Lemma \ref{Rishika}. Put $Q_0:=C_1\alpha^{-2-\epsilon}.$
By our lower bound for $\alpha$ and the choice of $\gamma$, we have $Q_0=o_{h,\epsilon}(U)$ for $X$ sufficiently large. Moreover, if $2\le q\le Q_0$, then $q^\sharp_{\ell,U}\ll_h q$ by Lemma \ref{complete-denominator-bound}, and hence $q^\sharp_{\ell,U}\le U$ for $X$ sufficiently large. Thus the stronger conclusion of Lemma \ref{Rishika} applies on all these arcs. 

Splitting $\mathfrak M'$ into the intervals
\[
\left[\frac aq-\tau,\frac aq+\tau\right]
\]
and writing $\xi=\frac aq+\eta$ with $|\eta|\le\tau$, we obtain
\begin{align*}
&\int_{-\tau}^{\tau}\sum_{\substack{2\le q\le C_1\alpha^{-2-\epsilon}}}
\sum_{\substack{1\le a<q\\(a,q)=1}}
\Bigl|\widehat{\1_A}\Bigl(\frac{a}{q}+\eta\Bigr)\Bigr|^2
\Bigl(q^{-1/(2+\epsilon)}X e^{-\sqrt{|\eta|X/2}}\Bigr)\,d\eta\\
&\qquad
+\int_{-\tau}^{\tau}\sum_{\substack{2\le q\le C_1\alpha^{-2-\epsilon}}}
\sum_{\substack{1\le a<q\\(a,q)=1}}
\Bigl|\widehat{\1_A}\Bigl(\frac{a}{q}+\eta\Bigr)\Bigr|^2
J_{U,\ell}Xe^{-c_2\frac{\log(Y/q)}{\log U}}\,d\eta
\ \gg\ \alpha^2X^2.
\end{align*}

The contribution of the sieve remainder is negligible. Indeed, we have, uniformly for
$2\le q\le C_1\alpha^{-2-\epsilon}$ and $|\eta|\le \tau$, that $Y/q=X^{1/k-o_{h,\epsilon}(1)}$, $\log U=(\log X)^{1/2}$, and
\[
J_{U,\ell}Xe^{-c_2\frac{\log(Y/q)}{\log U}}
\ll_h(\log X)^{(k-1)\gamma}X
\exp\left(-c_2\left(\frac{1}{k}-o_{h,\epsilon}(1)\right)(\log X)^{1/2}\right)
=o_{h,\epsilon}(\alpha X).
\]

Hence using Parseval, its total contribution is 
\[o_h\left(\alpha X\int_{\mathbb T}\big|\widehat{\1_A}(\xi)\big|^2\,d\xi\right)=o_{h,\epsilon}(\alpha^2X^2).\]
Therefore
\[
\int_{-\tau}^{\tau}\sum_{\substack{2\le q\le C_1\alpha^{-2-\epsilon}}}
\sum_{\substack{1\le a<q\\(a,q)=1}}
\Bigl|\widehat{\1_A}\Bigl(\frac{a}{q}+\eta\Bigr)\Bigr|^2
q^{-1/(2+\epsilon)}X e^{-\sqrt{|\eta|X/2}}\,d\eta
\gg_{h,\epsilon}\alpha^2X^2.
\]
Finally, since
\[
\int_{-\tau}^{\tau}X e^{-\sqrt{|\eta|X/2}\,}d\eta\le \int_{\R} e^{-\sqrt{|\eta|/2}}\,d\eta\ll 1,
\]
it follows that there exists some $\xi$ with $|\xi|\le \tau$ such that 
\[\sum_{2\leq q\leq C_1 \alpha^{-2-\epsilon}} q^{-1/(2+\epsilon)}
\sum_{\substack{1\leq a<q\\ (a,q)=1}}
|\widehat{\1_A}(a/q+\xi)|^2\gg_{h,\epsilon}\alpha^2X^2.\]
This proves the lemma. 
\end{proof}

\section{The classical density increment}\label{classical-density-increment}

We now begin the proof of Proposition~\ref{density-increment-calc}. In
Sections~\ref{classical-density-increment} and~\ref{applylevd}, the constants
$c_h,C_h,D_1,d_2$ are chosen by the following convention: $c_h$ and $d_2$ can be decreased, $C_h$ and $D_1$ can be increased. At the end of the proof these constants are fixed, and
depend only on $h$ and $\epsilon$.

Throughout this section, suppose $A\subset[X]$ and
$(A-A)\cap I(h_\ell)=\emptyset$. Set $\alpha=|A|/X$, and assume
\eqref{ell-quasi-bound}. Also assume $c_h<\alpha\leq 2/3 \text{ and }X>C_h.$

The goal of this section is to establish the \textup{($\ast$)} increment in
Proposition~\ref{density-increment-calc} in this range.

\medskip
We assume throughout this section that $\alpha>c_h$ and  
\begin{equation}\label{eq:alpha-lower-rho}
\alpha\gg_h (\log X)^{2ek}
\exp\!\Big(-(\log X)^{1/(2+2\epsilon)}\Big),
\end{equation}
after increasing $C_h$ if necessary. Thus Lemma~\ref{InitialMassDichotomy} is
available throughout this section.

\medskip

We now apply Lemma~\ref{InitialMassDichotomy}. If its density-increment alternative
occurs, then it is already stronger than \textup{($\ast$)}. We may therefore assume
that the Fourier-mass alternative in Lemma~\ref{InitialMassDichotomy} holds.
Then there exists $\xi\in\mathbb R$ with $|\xi|\le \tau=C_1L^{2}/X$ such that
\begin{equation}\label{eq:mass}
\sum_{2\le q\le C_{1}\alpha^{-2-\epsilon}} \frac1{q^{1/(2+\epsilon)}}
\sum_{\substack{1\le a<q\\(a,q)=1}}
\Bigl|\widehat{\1_A}\Bigl(\frac aq+\xi\Bigr)\Bigr|^{2}
\gg_{h,\epsilon}\alpha^{2}X^{2}.
\end{equation}
Set $Q:=C_1\alpha^{-2-\epsilon}$ and
\[
M:=\max_{\substack{2\le q\le Q\\ 1\le a<q\\(a,q)=1}}
\Bigl|\widehat{\1_A}\Bigl(\frac aq+\xi\Bigr)\Bigr|.
\]
We bound the total weight by
\[
\sum_{2\le q\le Q}\frac{1}{q^{1/(2+\epsilon)}}\sum_{\substack{1\le a<q\\(a,q)=1}}1
\le \sum_{2\le q\le Q} q \leq Q^{2}.
\]
Consequently, \eqref{eq:mass} implies $M^{2}\gg_{h,\epsilon} \alpha^{2}X^{2}/Q^{2}$, and hence there exist integers
$2\le q\le Q$ and $1\le a<q$ with $(a,q)=1$ such that
\begin{equation}\label{eq:large-fourier}
\Bigl|\widehat{\1_A}\Bigl(\frac aq+\xi\Bigr)\Bigr|
\gg_{h,\epsilon} \alpha^{3+\epsilon}X.
\end{equation}

We now pass from $\1_A$ to the balanced function $f_A$. By definition, $f_A:=\1_A-\alpha\1_{[X]}$. Using \eqref{eq: balanced ind diff}, the replacement error is $\ll \alpha^{-1-\epsilon}$. Using \eqref{eq:alpha-lower-rho}, we may enlarge $C_h$ so that this error term is negligible
compared to the right-hand side of \eqref{eq:large-fourier}.  Hence
\begin{equation}\label{eq:large-fourier-fa}
\Bigl|\widehat{f_A}\Bigl(\frac aq+\xi\Bigr)\Bigr|
\gg_{h,\epsilon}\alpha^{3+\epsilon}X.
\end{equation}

\medskip

Set $\eta:=20c\alpha^{2+\epsilon}$, where $c>0$ is chosen so that $\eta \in(0,1)$ and \eqref{eq:large-fourier-fa} implies $\Bigl|\widehat{f_A}\Bigl(\frac aq+\xi\Bigr)\Bigr|\ge \eta\alpha X.$
We may apply Lemma~\ref{increment} to
\eqref{eq:large-fourier-fa}, with the same $a$, $q$, and $\xi$.
Writing $T:=\max(1,|\xi|X),$
we obtain an arithmetic progression $P\subseteq [X]$
with common difference $\lambda(q)$ such that
\begin{equation}\label{eq:star-density}
\frac{|A\cap P|}{|P|}\ge \Bigl(1+\frac{\eta}{20}\Bigr)\alpha = \alpha + c\alpha^{3+\epsilon}
\end{equation}
and
\begin{equation}\label{eq:star-length}
|P|\gg_{h,\epsilon} \frac{\eta X}{T\lambda(q)}\gg_{h,\epsilon}\frac{\alpha^{2+\epsilon}X}{T\lambda(q)}.
\end{equation}
Using $\lambda(q)\le q^{k}$, $q\le C_1\alpha^{-2-\epsilon}$, and $T\ll L^{2}$ (since $|\xi|\le C_1L^{2}/X$), we get
\[
|P|
\gg \frac{\alpha^{2+\epsilon}X}{L^2\alpha^{-(2+\epsilon)k}}
=\frac{\alpha^{k(2+\epsilon)+2+\epsilon}X}{L^{2}}.
\]
Finally, since $0<\alpha\leq 2/3$, we have $L^{2}=\log^2(1/\alpha)< \alpha^{-2}$, giving
\begin{equation}\label{eq:star-length-clean}
|P|\gg \alpha^{k(2+\epsilon)+4+\epsilon}X.
\end{equation}

\medskip

Increasing $C_h$ if necessary so that
$C_h\ge k(2+\epsilon)+4+\epsilon$ and so that the fixed implicit constants in
\eqref{eq:star-density} and \eqref{eq:star-length-clean} are absorbed, the
bounds \eqref{eq:star-density} and \eqref{eq:star-length-clean} give
\[
|P|\ge \alpha^{C_h}X
\qquad\text{and}\qquad
\frac{|A\cap P|}{|P|}\ge \alpha+\alpha^{C_h}.
\]
This proves the \textup{($\ast$)} alternative in the range
$c_h<\alpha\le 2/3$.

\section{The random sparsification procedure}\label{applylevd}

In this section we complete the proof of Proposition
\ref{density-increment-calc}. The previous
section proves \textup{($\ast$)} when $c_h<\alpha\le2/3$. We therefore
assume that $0<\alpha\le c_h$ and that
\[
X\ge C_h,
\qquad
\ell\le
\exp\big((\log X)^{(2+\epsilon)/(2+2\epsilon)}\big).
\]

We shall apply Lemma~\ref{InitialMassDichotomy} with
$\epsilon':=\epsilon/6$ in place of $\epsilon$. 
Since $I(h_\ell)\subseteq h_\ell(\N)$, our assumption  $(A-A)\cap h_\ell(\N)\subseteq\{0\}$ implies
$(A-A)\cap I(h_\ell)=\emptyset$. 
In this section, we will replace $\epsilon$ with $\epsilon/6$ at various places to help with some calculations. The above bound on $\ell$ holds even after this replacement.

We will now apply Lemma \ref{InitialMassDichotomy}. If the lower-bound hypothesis on $\alpha$ in Lemma
\ref{InitialMassDichotomy} fails, then
\[
\alpha
\ll_{h,\epsilon}
(\log X)^{2ek}
\exp\!\Big(-\big(\log X\big)^{1/(2+\epsilon/3)}\Big)
\le
\exp\!\Big(-c_h(\log X)^{1/(2+2\epsilon)}\Big)
\]
for sufficiently large $X$, after decreasing $c_h$. This is conclusion
\textup{(1)} of Proposition
\ref{density-increment-calc}. Hence, for the rest of the argument we can assume \begin{equation} \label{eq:L bound}
    \alpha>\exp\!\Big(-c_h(\log X)^{1/(2+2\epsilon)}\Big) \text{ which is equivalent to } L\ll(\log X)^{1/(2+2\epsilon)}.
\end{equation}

If the density-increment alternative in Lemma \ref{InitialMassDichotomy} holds, then its progression has common difference $\lambda(1)=1$, length
$\gg X/L^2\gg X(\log X)^{-1/(1+\epsilon)}$, and a fixed multiplicative density gain. After adjusting
$D_1$ and $d_2$, this is conclusion \textup{(2)} with $j=1$ of Proposition
\ref{density-increment-calc}.

We may therefore suppose that there is $\xi\in\mathbb R$, with
$|\xi|\le C_1L^2/X$, such that
\begin{equation}\label{eq:leveld-input}
\sum_{2\le q\le C_1\alpha^{-2-\epsilon/6}}q^{-\sigma}
\sum_{\substack{1\le a<q\\(a,q)=1}}
\left|\widehat{\1_A}\left(\frac aq+\xi\right)\right|^2
\ge c_{\mathrm{mass}}\alpha^2X^2,
\qquad \sigma:=\frac1{2+\epsilon/6},
\end{equation}
where $c_{\mathrm{mass}}>0$ depends only on $h$ and $\epsilon$.

Put $B:=WL^{1+\epsilon}$, where $W$ is a sufficiently large constant depending only on $h$ and $\epsilon$. We decompose the denominators $q$ in \eqref{eq:leveld-input}. If $q=\prod_pp^{\nu_p}$, write $q=q_*q'$, where $q_*:=\prod_{\nu_p\ge3}p^{\nu_p}$ and $q':=\prod_{1\le\nu_p\le2}p^{\nu_p}$. Thus $q_*$ is cube-full, $(q_*,q')=1$, and
\begin{equation}\label{eq:cubefull-sum}
\sum_{\substack{t\ge1\\t\text{ cube-full}}}t^{-\sigma}\ll_\epsilon1,
\end{equation}
since $3\sigma>1$, as follows from the Euler product. 
We now define some notation used in this section.
\begin{itemize}
    \item Let $K(q')$ be one plus the number of distinct prime factors of $q'$ exceeding $B$.
    \item Put $d(q'):=\prod_{p\mid q',\,p\le B}p^{\nu_p}$. 
    \item List the remaining prime factors as $p_2(q')<\cdots<p_{K(q')}(q')$.
    \item Set $\mathcal D:=(\log L)^{-1}\mathbb Z_{\ge0}$. 
\end{itemize}

For a cube-full integer $t$, an integer $\kappa\ge1$, and $\vec\beta\in\mathcal D^\kappa$, let $\mathcal Q_{t,\kappa,\vec\beta}$ consist of the denominators $q$ in \eqref{eq:leveld-input} for which
\[q_*=t, \qquad K(q')=\kappa, \qquad d(q')\sim L^{\beta_1}, \qquad p_i(q')\sim L^{\beta_i} \text{ for } 2\le i\le\kappa,\]

 where $u\sim L^\beta$ means $L^\beta\le u<eL^\beta$. These classes partition the denominators. A nonempty class has $t\le C_1\alpha^{-2-\epsilon/6}$, $\kappa\le3L/\log L$, $\beta_1\ge0$, and $1+\epsilon\le\beta_2\le\cdots\le\beta_\kappa$, provided $L$ is sufficiently large. Define
\begin{align*}
E_{t,\kappa,\vec\beta}&:=\sum_{q\in\mathcal Q_{t,\kappa,\vec\beta}}\sum_{\substack{1\le a<q\\(a,q)=1}}\left|\widehat{\1_A}\left(\frac aq+\xi\right)\right|^2,\\
S_{t,\kappa,\vec\beta}&:=\sum_{q\in\mathcal Q_{t,\kappa,\vec\beta}}q^{-\sigma}\sum_{\substack{1\le a<q\\(a,q)=1}}\left|\widehat{\1_A}\left(\frac aq+\xi\right)\right|^2.
\end{align*}

We first rule out the case $\kappa\geq 2$. To this end, we establish
two complementary bounds, which play the role of the two endpoint
estimates in Green--Sawhney \cite[Lemma~7.1]{GreenSawhney2024FSv1}.

\begin{lem}\label{lem:leveld-endpoints}
Let $\kappa\ge2$. Then either conclusion \textup{(1)} or conclusion \textup{(2)} of Proposition \ref{density-increment-calc} holds, or
\begin{align}
E_{t,\kappa,\vec\beta}&\ll\left(\frac{C_0}{\kappa-1}\right)^{\kappa-1}L^{\kappa-1}\alpha^2X^2,\label{eq:E-r-one}\\
E_{t,\kappa,\vec\beta}&\ll\left(\frac{C_2(\kappa-1)}{W}\right)^{\kappa-1}L^{\beta_2+\cdots+\beta_\kappa-(1+\epsilon)(\kappa-1)}\alpha^2X^2,\label{eq:E-r-kappa}
\end{align}
where $C_2$ depends only on $h$ and $\epsilon$.
\end{lem}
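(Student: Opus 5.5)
Both bounds come from applying Theorem~\ref{level d} to a function built from $\1_A$ restricted to a residue class modulo the "small part" $t\,d$ of the denominator, with the level set $S$ corresponding to the large primes $p_2,\dots,p_\kappa$ and $d=\kappa-1$.

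\textbf{Setup.} Fix $t,\kappa,\vec\beta$. Every $q\in\mathcal Q_{t,\kappa,\vec\beta}$ factors as $q=t\,d(q')\,p_2\cdots p_\kappa$, where the primes $p_i>B$ appear to exponent $1$ or $2$. A fraction $a/q$ with $(a,q)=1$ splits by the Chinese Remainder Theorem as
\[
\frac aq=\frac{a_0}{m}+\sum_{i}\frac{a_i}{p_i^{\nu_i}},\qquad m=t\,d(q'),
\]
with all $a_i$ coprime to their moduli. Since $m\le C_1\alpha^{-2-\epsilon/6}\cdot e^{\beta_1}$, we can handle the $a_0/m+\xi$ part by twisting. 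Put
\[
f_{m,a_0}(x)=\1_A(x)\,e(-x(a_0/m+\xi)).
\]
Split $[X]$ into residue classes $r \bmod m$, and work on progressions of common difference $\lambda(m)$ and length $X'\approx X/(\lambda(m)L^2)$, on which $\xi$ is essentially constant (as in Lemma~\ref{increment}). On such a progression $P$, let $\alpha_P$ be the density of $A$. If $\alpha_P>2^{|S|}\alpha$ for some structured sub-progression, we obtain option (2) with $j\asymp|S|$. The length loss is $e^{O(L^{1+\epsilon}j)}$ because the moduli involved are at most $B^{O(j)}$ times the small part. This is consistent with option (2).

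\textbf{Proof of \eqref{eq:E-r-one}.} Take $\mathcal Q$ to be the set of primes and prime squares in $(B,X^{1/(32L)}]$, grouped so that moduli are pairwise coprime. Apply Theorem~\ref{level d} to $f=\1_{A\cap P}$ rescaled, with density parameter $\alpha_P$, or with $\alpha$ if $\alpha_P\le 2\alpha$ and otherwise we already have an increment, and with $d=\kappa-1\le 2^{-7}L$. If $\kappa-1$ exceeds this range, the bound is trivial by Parseval. The alternative branch of Theorem~\ref{level d} gives option (2). Otherwise we sum \eqref{main41} over the choices of $a_0$ modulo $m$ and over the residue classes $r \bmod m$. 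Parseval over the classes modulo $m$ recombines the pieces into $\sum_r|A\cap P_r|^2/|P_r|$-type quantities, which are $\ll\alpha^2X^2$ unless there is an increment. This yields $(C_0L/(\kappa-1))^{\kappa-1}\alpha^2X^2$. The remaining condition $q\le X^{1/(32L)}$ holds because $p_i\le Q\le\alpha^{-3}$ and $L\ll(\log X)^{1/(2+2\epsilon)}$.

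\textbf{Proof of \eqref{eq:E-r-kappa}.} This is the large-sieve endpoint. We bound $E$ by counting the denominators: with the $\beta$'s fixed, the number of choices of $(p_2,\dots,p_\kappa)$ is at most $\prod L^{\beta_i}/(\kappa-1)!$, since the primes are ordered. For each fixed $q$ we use the trivial bound
\[
\sum_a|\widehat{f}(a/q+\cdot)|^2\ll q\,\|f\|_2^2\cdot(\text{increment or }\alpha^2X^2/q)
\]
from the large sieve restricted to residue classes, again via Lemma~\ref{increment} applied to \eqref{szem-condition}. Combining this with $p_i>B=WL^{1+\epsilon}$, so that each prime contributes a factor $L^{\beta_i}/(WL^{1+\epsilon})$, and with $(\kappa-1)^{\kappa-1}/(\kappa-1)!\le e^{\kappa}$, gives the stated $(C_2(\kappa-1)/W)^{\kappa-1}L^{\sum\beta_i-(1+\epsilon)(\kappa-1)}$ after absorbing constants.

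\textbf{Main obstacle.} The hardest step is the bookkeeping that reduces to Theorem~\ref{level d} on progressions modulo $\lambda(m)$ while keeping the density normalization and the length loss $e^{-D_1L^{1+\epsilon}j}$. In particular, one must show that the summation over $a_0$ and over classes modulo $m$ costs no more than $O(1)^{\kappa}$. I would first try to run the argument on the single class carrying the most mass and use the pigeonhole principle.
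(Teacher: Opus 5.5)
\textbf{There is a genuine gap in your proof of \eqref{eq:E-r-kappa}.} If you treat denominators one at a time, the best bound for a fixed $q$ is
$\sum_{a}|\widehat{\1_A}(a/q+\xi)|^2\le q\sum_{b\bmod q}|A_{q,b}|^2\ll\alpha^2X^2$,
valid unless $A$ has density $>2\alpha$ on some class modulo $q$. Lemma~\ref{increment} with a small $\eta$ does not improve this, since a gain of $1+\eta/20$ is not of the form $(1+d_2)^j$.

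Summing over the class then gives $E_{t,\kappa,\vec\beta}\ll|\mathcal Q_{t,\kappa,\vec\beta}|\,\alpha^2X^2$. The number of admissible $p_i$ is $\asymp L^{\beta_i}/(\beta_i\log L)$, and nothing in this count divides by $WL^{1+\epsilon}$. The condition $p_i>B$ only tells you $\beta_i\ge1+\epsilon$. Since $\sum_i\beta_i\log L\le\log q\le 3L$, AM--GM shows your bound is weaker than \eqref{eq:E-r-kappa} by a factor of at least $(WL^{\epsilon}/O(1))^{\kappa-1}$. This is not cosmetic: the factor $L^{-(1+\epsilon)}$ per large prime in \eqref{eq:E-r-kappa} is exactly what cancels the factor $L$ per large prime in \eqref{eq:E-r-one} when the two are interpolated in \eqref{eq:S-endpoint-interpolation}.

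\textbf{The missing idea} is to control many denominators at once through one common modulus, which costs nothing in the count. By the identity $\sum_{u\bmod R}|\widehat{\1_A}(u/R+\theta)|^2=R\sum_{b\bmod R}|\widehat{\1_{A_{R,b}}}(R\theta)|^2$, the total mass at all fractions whose denominator divides $R$ is at most $R\sum_b|A_{R,b}|^2\ll\alpha^2X^2$. The only exception is when $A$ has density $>2\alpha$ on a class modulo $R$, which is conclusion (2) with $j=1$ provided $\log R\ll WL^{1+\epsilon}$.

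Putting every prime of the ranges $[L^{\beta_i},eL^{\beta_i})$ into $R$ would make $\log R$ far too large. So the paper keeps random subsets $\mathcal P_i$ of size about $WL^{1+\epsilon}/(10(\kappa-1)\beta_i\log L)$:
\begin{itemize}
\item each prime survives with probability $\gg\frac{W}{\kappa-1}L^{1+\epsilon-\beta_i}$;
\item averaging yields a choice retaining the product of these probabilities times $E_{t,\kappa,\vec\beta}$;
\item every retained denominator divides $R_\kappa=t\prod_{p\le B}p^2\prod_{p\in\bigcup_i\mathcal P_i}p^2$, with $\log R_\kappa\ll WL^{1+\epsilon}$;
\item dividing by the survival probabilities gives exactly \eqref{eq:E-r-kappa}.
\end{itemize}

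\textbf{Your argument for \eqref{eq:E-r-one}} has the right overall strategy, matching the paper: Theorem~\ref{level d} with $d=\kappa-1$ on residue-class restrictions, recombined by orthogonality. But it needs the same common-modulus device, plus two further fixes.
\begin{itemize}
\item Since $m=t\,d(q')$ varies over the class, twisting separately for each $m$ loses the number of possible $d(q')$. Pigeonholing a single class gives no upper bound on $E_{t,\kappa,\vec\beta}$ at all. The paper instead uses the single modulus $R_1=t\prod_{p\le B}p^2$.
\item The paper applies Theorem~\ref{level d} directly to $f_b(x)=\1_{A_{R_1,b}}(m_b+x)e(-R_1\xi x)$, so no short progressions are needed. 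Recombining $\asymp L^2$ blocks by Cauchy--Schwarz would lose that factor.
\item The paper takes the pairwise coprime set $\{p^2:\ B<p\le C_1\alpha^{-2-\epsilon/6},\ p\nmid t\}$. Through the condition $s\nmid a$, this covers both $p\,\|\,q$ and $p^2\mid q$. A set containing both $p$ and $p^2$ is not admissible in Theorem~\ref{level d}.
\end{itemize}
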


\begin{proof}
We may assume that the class $\mathcal{Q}_{t,\kappa,\vec{\beta}}$ is nonempty and that neither conclusion of the proposition holds.

Put $R_1:=t\prod_{p\le B}p^2$. The prime number theorem gives $R_1\le\exp(CWL^{1+\epsilon})$. We can assume $R_1\leq X^{1/10}$. If not then $X^{1/10}<\exp(CWL^{1+\epsilon})$ which gives $L\gg(\log{X})^{1/(1+\epsilon)}$. This implies conclusion \textup{(1)} in Proposition \ref{density-increment-calc}.

Let $\mathcal R_2:=\{p^2:p>B,\ p\nmid t,\ p\le C_1\alpha^{-2-\epsilon/6}\}$. For any $R \in\N$ and $0\le b<R $, let 
\[A_{R ,b}:=\{m:b+R m\in A\}\text{ for } b\pmod{R}.\]
For $R_1\in\mathbb{N}$ and $0\le b<R_1,$ define $I_b:=\{m\in\mathbb Z:b+R_1m\in[X]\} \text{ and } N_b:=|I_b|$.

We apply Theorem \ref{level d}, with $d=\kappa-1$ and the pairwise coprime set $\mathcal R_2$, after translating each $I_b$ to $[N_b]$. Assumptions of Theorem \ref{level d} hold unless conclusion \textup{(1)} does: indeed $\kappa-1\le2^{-7}L$ follows from $\kappa\le3L/\log L$; if $N:=\lfloor X/R_1\rfloor$, then $\alpha>2N^{-1/2}$ unless $\alpha\ll X^{-9/20}$; and if some $s\in\mathcal R_2$ exceeds $N^{1/(32L)}$, then $s\le C_1^2\alpha^{-4-\epsilon/3}$ gives $L\gg_{h,\epsilon}(\log X)^{1/2}$. Each of these failures implies conclusion \textup{(1)} after decreasing $c_h$.

We use the following identity
\begin{equation}\label{eq:restriction-identity-endpoint}
\sum_{u\bmod R}\left|\widehat{\1_A}\left(\frac uR+\theta\right)\right|^2
=R\sum_{b\bmod R}\left|\widehat{\1_{A_{R,b}}}(R\theta)\right|^2,
\end{equation}
which follows from orthogonality. Indeed, we have
\begin{align*}
R\sum_{b\bmod R}\left|\widehat{\1_{A_{R,b}}}(R\theta)\right|^2&=R\sum_{b\bmod R}\,\sum_{m_1,m_2\in A_{R,b}}e((m_1-m_2)R\theta)\\
&=R\sum_{b\bmod R}\,\sum_{\substack{x_1,x_2\in A\\ x_1,x_2\equiv b\bmod R}}e((x_1-x_2)\theta)\\
&=R\sum_{\substack{x_1,x_2\in A\\x_1\equiv x_2\bmod R}}e((x_1-x_2)\theta)\\
&=R\sum_{x_1,x_2\in A}e((x_1-x_2)\theta)\,\frac{1}{R}\sum_{u\bmod R}e\left(\frac{(x_1-x_2)u}{R}\right)\\
&=\sum_{u\bmod R}\left|\widehat{\1_A}\left(\frac uR+\theta\right)\right|^2.
\end{align*}
Every $q\in\mathcal Q_{t,\kappa,\vec\beta}$ divides $R_1\prod_{i=2}^\kappa p_i(q')^2$. The Chinese remainder theorem therefore gives unique residues $u\bmod R_1$ and $v\bmod\prod_{i=2}^\kappa p_i(q')^2$ such that
\[
\frac aq\equiv\frac u{R_1}+\frac v{\prod_{i=2}^\kappa p_i(q')^2}\pmod1,
\qquad p_i(q')^2\nmid v\quad(2\le i\le\kappa).
\]
Here $v$ is divisible by $p_i(q')$ but not $p_i(q')^2$ when $p_i(q')\Vert q$, and is a unit modulo $p_i(q')^2$ when $p_i(q')^2\mid q$. The residues $u,v$ and the set $\{p_2(q')^2,\dots,p_\kappa(q')^2\}$ determine $a/q$, so this assignment is injective.

For each set $S=\{p_2(q')^2,\dots,p_\kappa(q')^2\}$ and each admissible $v\bmod\prod_{s\in S}s$, apply \eqref{eq:restriction-identity-endpoint} with $\theta=\xi+v/\prod_{s\in S}s$, and then replace $R_1v$ by $a$. Multiplication by $R_1$ permutes the residues not divisible by any $s\in S$ because $(R_1,s)=1$. Allowing all sets $S\subseteq\mathcal R_2$ of size $\kappa-1$ may add frequencies, and gives
\begin{equation}\label{eq:leveld-restriction}
E_{t,\kappa,\vec\beta}\le R_1\sum_{b\bmod R_1}\sum_{\substack{S\subseteq\mathcal R_2\\|S|=\kappa-1}}\sum_{\substack{a\md{\prod_{s\in S}s}\\s\in S\Rightarrow s\nmid a}}\left|\widehat{\1_{A_{R_1,b}}}\left(R_1\xi+\frac a{\prod_{s\in S}s}\right)\right|^2.
\end{equation}
For $f_{b}=\1_{A_{R_1,b}}(m_b+x)e(-R_1\xi x)$, where $m_b=\min{I_b}-1$, the sum over $S,a$ in \eqref{eq:leveld-restriction} is exactly the left side of \eqref{main41} in Theorem \ref{level d}, with $X=N_b$, $\mathcal Q=\mathcal R_2$, and $d=\kappa-1$.

If the first conclusion of Theorem \ref{level d} holds for every $b$, then $\sum_bN_b=X$ and $\max_bN_b\le X/R_1+1$ give
\[
R_1\alpha^2\sum_bN_b^2\left(\frac{C_0L}{\kappa-1}\right)^{\kappa-1}\ll\left(\frac{C_0}{\kappa-1}\right)^{\kappa-1}L^{\kappa-1}\alpha^2X^2,
\]
which proves \eqref{eq:E-r-one}. Otherwise, the second conclusion gives some $S\subseteq\mathcal R_2$, with $j:=|S|\le2L$, and a progression in $I_b$ of common difference $\prod_{s\in S}s$ and density greater than $2^j\alpha$. Lifting it to $[X]$ gives common difference $Q:=R_1\prod_{s\in S}s$ and length $X/Q+O(1)$. Since $Q\mid\lambda(Q)$, one of its subprogressions of common difference $\lambda(Q)$ has the same lower bound for the density and length $X/\lambda(Q)+O(1)$. Moreover, $\log Q\ll_{h,\epsilon}WL^{1+\epsilon}j$ and $\lambda(Q)\le Q^{\deg h}$. The failure of conclusion \textup{(1)} and $j\le2L$ give $\lambda(Q)=X^{o_{h,\epsilon}(1)}$, so the additive error is harmless; after increasing $D_1$, the length is at least $Xe^{-D_1L^{1+\epsilon}j}$. Taking $1+d_2\le2$ gives conclusion \textup{(2)}. This proves \eqref{eq:E-r-one} unless that conclusion already holds.
\medskip

We now prove \eqref{eq:E-r-kappa}. For $2\le i\le\kappa$, let $T_i$ be the primes in $[L^{\beta_i},eL^{\beta_i})$, put $x_i:=WL^{1+\epsilon}/(10(\kappa-1)\beta_i\log L)$, and set $M_i:=\max(1,\lfloor x_i\rfloor)$. Choose independently and uniformly a subset $\mathcal P_i\subseteq T_i$ of cardinality $M_i$. The prime number theorem gives $|T_i|\asymp L^{\beta_i}/(\beta_i\log L)$, hence:
\[
\mathbb P(p\in\mathcal P_i)\gg\frac W{\kappa-1}L^{1+\epsilon-\beta_i}.
\]
Independence and averaging therefore give choices of the $\mathcal P_i$ for which
\begin{equation}\label{eq:sparse-lower-endpoint}
\sum_{\substack{q\in\mathcal Q_{t,\kappa,\vec\beta}\\p_i(q')\in\mathcal P_i\ (2\le i\le\kappa)}}\sum_{\substack{ 1\leq a<q \\(a,q)=1}}\left|\widehat{\1_A}\left(\frac aq+\xi\right)\right|^2
\gg\left(\frac{W}{C_2(\kappa-1)}\right)^{\kappa-1}L^{(1+\epsilon)(\kappa-1)-(\beta_2+\cdots+\beta_\kappa)}E_{t,\kappa,\vec\beta}.
\end{equation}

Put $R_\kappa:=t(\prod_{p\le B}p^2)(\prod_{p\in\mathcal P_2\cup\cdots\cup\mathcal P_\kappa}p^2)$. Using the prime number theorem, the nonemptiness of $\mathcal{Q}_{t,\kappa, \vec{\beta}}$, the bound $M_i\leq 1+x_i$ and the definition of $x_i$, we obtain
\[\log{R_\kappa}\ll\log{t}+WL^{1+\epsilon}+WL^{1+\epsilon}\ll WL^{1+\epsilon}.\]

We can assume $R_\kappa\leq X^{1/10}$. If not then $X^{1/10}<\exp(CWL^{1+\epsilon})$ which gives $L\gg(\log{X})^{1/(1+\epsilon)}$. This implies conclusion \textup{(1)} in Proposition \ref{density-increment-calc}. 

Every denominator on the left of \eqref{eq:sparse-lower-endpoint} divides $R_\kappa$, and distinct reduced fractions give distinct residues modulo $R_\kappa$. By \eqref{eq:restriction-identity-endpoint}, the left-hand side of \eqref{eq:sparse-lower-endpoint} is at most
\[
R_\kappa\sum_{b\bmod R_\kappa}\left|\widehat{\1_{A_{R_\kappa,b}}}(R_\kappa\xi)\right|^2.
\]
If $|A_{R_\kappa,b}|\le2\alpha(X/R_\kappa+1)$ for every $b$, then the above quantity is at most $R_\kappa\max_b|A_{R_\kappa,b}|\sum_b|A_{R_\kappa,b}|\ll\alpha^2X^2$, which proves \eqref{eq:E-r-kappa}. Otherwise, for some $b$, $A$ has density greater than $2\alpha$ on the residue class $b\bmod R_\kappa$ in $[X]$. Refining this progression to common difference $\lambda(R_\kappa)$ gives one of density greater than $2\alpha$ and length $X/\lambda(R_\kappa)+O(1)$. Since $\log R_\kappa\ll_{h,\epsilon}WL^{1+\epsilon}$ and $\lambda(R_\kappa)\le R_\kappa^{\deg h}\ll_{h}\exp(O_{h,\epsilon}(\log{X})^{1/2})$, its length is at least $Xe^{-D_1L^{1+\epsilon}}$ after increasing $D_1$. Taking $1+d_2\le2$ gives conclusion \textup{(2)} with $j=1$. This proves the lemma.
\end{proof}

We may now assume that neither conclusion of the proposition holds and sum the lemma over the denominator classes. Put $\rho:=1/(2+\epsilon/2)$ and $S_0:=\beta_2+\cdots+\beta_\kappa$. Since $q\ge tL^{\beta_1+S_0}$, taking the geometric mean of \eqref{eq:E-r-one} and \eqref{eq:E-r-kappa} with exponents $1-\rho$ and $\rho$ gives
\begin{equation}\label{eq:S-endpoint-interpolation}
S_{t,\kappa,\vec\beta}\ll_{h,\epsilon}t^{-\sigma}(CW^{-\rho})^{\kappa-1}L^{-b(\kappa-1)-aS_0-\sigma\beta_1}\alpha^2X^2.
\end{equation}
Here $b:=\rho(2+\epsilon)-1=\epsilon/(4+\epsilon)>0$ and $a:=\sigma-\rho=4\epsilon/((12+\epsilon)(4+\epsilon))>0$; we also used $(\kappa-1)^{(2\rho-1)(\kappa-1)}\le1$.

Since $\mathcal D=(\log L)^{-1}\mathbb Z_{\ge0}$, we have $\sum_{\beta\in\mathcal D}L^{-a\beta}=(1-e^{-a})^{-1}$, and similarly with $a$ replaced by $\sigma$. Discarding the ordering and lower bounds on $\beta_2,\dots,\beta_\kappa$, and using \eqref{eq:cubefull-sum}, we obtain
\[
\sum_{\substack{t\text{ cube-full}\\\kappa\ge2,\ \vec\beta\in\mathcal D^\kappa}}S_{t,\kappa,\vec\beta}\ll_{h,\epsilon}\alpha^2X^2\sum_{\kappa\ge2}(C_3W^{-\rho}L^{-b})^{\kappa-1}\le\frac{c_{\mathrm{mass}}}{2}\alpha^2X^2
\]
after choosing $W$ sufficiently large. Thus at least half of the mass in \eqref{eq:leveld-input} comes from classes with $\kappa=1$.
Now, we handle the case with $\kappa=1$. For each cube-full $t$, define
\[
\mathcal E_t:=\sum_{\substack{2\le q\le C_1\alpha^{-2-\epsilon/6}\\q_*=t,\ K(q')=1}}\sum_{\substack{1\le a<q\\(a,q)=1}}\left|\widehat{\1_A}\left(\frac aq+\xi\right)\right|^2.
\]
Since $q^{-\sigma}\le t^{-\sigma}$ whenever $q_*=t$, we have 
\[\frac {c_{\mathrm{mass}}}{2}\alpha^2X^2\le\sum_{t\text{ cube-full}}t^{-\sigma}\mathcal E_t.\] Equation \eqref{eq:cubefull-sum} therefore shows that some cube-full $t$ satisfies $\mathcal E_t\gg_{h,\epsilon}\alpha^2X^2$.

Put $Q:=t\prod_{p\le B}p^2$. Every denominator contributing to $\mathcal E_t$ divides $Q$. Moreover, by \eqref{eq:L bound} $\log Q\ll_{h,\epsilon}WL^{1+\epsilon}\ll_{h,\epsilon}W(\log X)^{1/2}$. Distinct reduced fractions contributing to $\mathcal E_t$ give distinct nonzero residues modulo $Q$, and therefore
\begin{equation}\label{eq:kappa-one-common-denominator}
\sum_{\substack{b\bmod Q\\b\ne0}}\left|\widehat{\1_A}\left(\frac bQ+\xi\right)\right|^2\gg_{h,\epsilon}\alpha^2X^2.
\end{equation}

Recall that $f_A=\1_A-\alpha\1_{[X]}$. Since conclusion \textup{(1)} fails, $L^2=X^{o(1)}$, so $Q|\xi|=o(1)$ and $|\xi|\le1/(2Q)$. For $b\not\equiv0\pmod Q$, we therefore have $\|b/Q+\xi\|_{\mathbb T}\ge1/(2Q)$ and $|\widehat{\1_{[X]}}(b/Q+\xi)|\ll Q$. Summing $|z-w|^2\ge|z|^2/2-|w|^2$ over $b$, the total contribution of $\alpha\widehat{\1_{[X]}}$ is $O(\alpha^2Q^3)=o_{h,\epsilon}(\alpha^2X^2)$. Hence
\[
\sum_{b\bmod Q}\left|\widehat{f_A}\left(\frac bQ+\xi\right)\right|^2\gg_{h,\epsilon}\alpha^2X^2.
\]
Apply Lemma \ref{increment} with $q=Q$ and a sufficiently small constant $\eta>0$ depending only on $h$ and $\epsilon$. Its condition $3Q\le X$ holds, and $T=\max(1,|\xi|X)\ll L^2$. After decreasing $d_2$ if necessary, we obtain a progression $P\subseteq[X]$ of common difference $\lambda(Q)$, density at least $(1+d_2)\alpha$, and length $|P|\gg\lambda(Q)^{-1}L^{-2}X$. Finally $\lambda(Q)\le Q^{\deg h}\le\exp(CWL^{1+\epsilon})$, so $|P|\ge Xe^{-D_1L^{1+\epsilon}}$ after increasing $D_1$. This is conclusion \textup{(2)} with $j=1$, and the proof of Proposition \ref{density-increment-calc} is complete.

\begin{appendix}
\section{The sieve calculation}\label{sievecalc}

We use the fundamental lemma of sieve theory in the direct form~\cite[Corollary 19]{TaoSieveNotes}.
\begin{lem}[Fundamental lemma of sieve theory, direct form]\label{fundamental-sieve-lemma}
Let $\kappa,s,D\ge 1$. Let $z=D^{1/s}$, and let $g$ be a multiplicative function with
$0\le g(p)<1$ for every prime $p$. Put
\[
V(w):=\prod_{p<w}(1-g(p)).
\]
Assume that, for some $A\ge1$,
\begin{equation}\label{eq:sieve-dimension}
V(w)\le A\left(\frac{\log z}{\log w}\right)^\kappa V(z)
\qquad (2\le w\le z).
\end{equation}
For each prime $p$, let $E_p$ be a set of integers, and let
$(a_n)_{n\in\mathbb Z}$ be a finitely supported sequence of nonnegative real numbers.
For squarefree $d$, define $E_d:=\bigcap_{p\mid d}E_p.$
Suppose that for every squarefree $d\le D$, one has $\sum_{n\in E_d}a_n=Xg(d)+r_d$, where $X >0$ and $r_d \in \R$.
Then
\[
\sum_{n\notin \bigcup_{p<z}E_p}a_n
=
\bigl(1+O_{\kappa,A}(e^{-s})\bigr)XV(z)
+
O_{\kappa,A}\left(\sum_{d\le D}\mu(d)^2|r_d|\right).
\]
\end{lem}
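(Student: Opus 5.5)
The plan is to prove Lemma~\ref{fundamental-sieve-lemma} by the standard combinatorial (Brun-type) upper and lower bound sieve, exactly along the lines of \cite[Corollary 19]{TaoSieveNotes}. Write $P(z):=\prod_{p<z}p$. For each $n$, put $d_n:=\prod_{p<z,\ n\in E_p}p$, so that $n\notin\bigcup_{p<z}E_p$ if and only if $d_n=1$. The key construction is a pair of weight sequences $\lambda^{\pm}_d$, supported on squarefree $d\mid P(z)$ with $d\le D$, satisfying $|\lambda^\pm_d|\le 1$, $\lambda^\pm_1=1$, and the sandwich property
\[
\sum_{d\mid m}\lambda^-_d\ \le\ \1_{m=1}\ \le\ \sum_{d\mid m}\lambda^+_d\qquad(m\mid P(z)).
\]
Given these weights, apply the sandwich with $m=d_n$ and multiply by $a_n\ge0$. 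Summing over $n$ and interchanging the order of summation, the sieved sum $\sum_{n\notin\bigcup_{p<z}E_p}a_n$ is then squeezed between
\[
\sum_{d}\lambda^\pm_d\sum_{n\in E_d}a_n\;=\;X\sum_d\lambda^\pm_d g(d)+\sum_d\lambda^\pm_d r_d .
\]
This step uses $d\mid d_n\iff n\in E_d$. The remainder part is at most $\sum_{d\le D}\mu(d)^2|r_d|$, because $|\lambda^\pm_d|\le1$. It therefore suffices to prove
\[
\sum_{d\mid P(z)}\lambda^\pm_d g(d)=V(z)\bigl(1+O_{\kappa,A}(e^{-s})\bigr).
\]

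For the weights I would use Brun's truncation with variable cutoffs. Split the primes below $z$ into ranges $z_{j+1}\le p<z_j$, where the $z_j$ decrease geometrically in $\log$, starting from $z_0=z$. Define $\lambda^\pm_d=\mu(d)$ when, for every $j$, the number of prime factors of $d$ that are at least $z_j$ is at most an even (respectively odd) threshold $r_j$ growing linearly in $j$, and set $\lambda^\pm_d=0$ otherwise. The Bonferroni inequalities, applied successively in each range, give the sandwich property. The choice of the $r_j$ (proportional to $s$ plus a linear term in $j$) forces every $d$ in the support to satisfy $d\le\prod_j z_j^{r_j}\le D=z^s$.

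The main work is the error estimate. Expanding $\sum_d\lambda^\pm_d g(d)-V(z)$ produces a sum over the first range where the truncation fails. Each such term is bounded by a product of the form $V(z)\,V(z_j)^{-1}$ times the tail of the exponential series $\sum_{r>r_j}\frac{1}{r!}\bigl(\sum_{z_{j+1}\le p<z_j}g(p)\bigr)^r$. The dimension hypothesis \eqref{eq:sieve-dimension} controls both factors. It gives $\sum_{w_1\le p<w_2}g(p)\le\kappa\log\frac{\log w_2}{\log w_1}+O(\log A)$, since $-\log(1-g(p))\ge g(p)$. It also gives $V(z_j)^{-1}V(z)\le A(\log z/\log z_j)^{\kappa}$, which is the only place \eqref{eq:sieve-dimension} enters with $w=z_j$. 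A Stirling bound on the truncated exponential tail then gives a contribution of size $e^{-(1+o(1))r_j\log r_j}$ per range, up to polynomial-in-$(\log z/\log z_j)$ losses. Summing over $j$ with the geometric choice of $z_j$ yields $O_{\kappa,A}(e^{-s})V(z)$, in fact something like $s^{-s}$.

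I expect this bookkeeping to be the main obstacle: choosing $z_j$ and $r_j$ so that the support stays below $D$ while the accumulated losses $A(\log z/\log z_j)^\kappa$ are beaten by the factorial decay. One further point needs care. The hypothesis on $g$ allows $g(p)$ close to $1$, so inverting $V(z_j)$ is only legitimate because \eqref{eq:sieve-dimension} holds with the given $A$. If this becomes unwieldy, the fallback is to quote the Rosser/$\beta$-sieve fundamental lemma (Friedlander--Iwaniec, Lemma 6.8) for the weights directly.
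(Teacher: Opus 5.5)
\textbf{There is a genuine gap, but it is repairable.} The paper gives no proof of this lemma; it quotes \cite[Corollary 19]{TaoSieveNotes}. Your reduction to weights with $|\lambda^\pm_d|\le 1$, $\lambda^\pm_1=1$, supported on $d\mid P(z)$, $d\le D$, is the right one. The problem is that your error analysis uses a bound the hypothesis does not give.

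\textbf{The one-sided hypothesis.} Condition \eqref{eq:sieve-dimension} compares $V(w)$ only with $V(z)$ for the fixed $z=D^{1/s}$. It yields
\[
V(w)\le A\Bigl(\frac{\log z}{\log w}\Bigr)^{\kappa}V(z),\qquad
\sum_{w\le p<z}g(p)\le\log\frac{V(w)}{V(z)}\le\kappa\log\frac{\log z}{\log w}+\log A .
\]
It does not yield your range-local bound $\sum_{w_1\le p<w_2}g(p)\le\kappa\log\frac{\log w_2}{\log w_1}+O(\log A)$ for $w_2<z$. For example, put $g(p)=\frac12$ for $p<w_0$ and $g(p)=0$ otherwise, where $w_0=w_0(z)\to\infty$ and $2^{\pi(w_0)}\le(\log z/\log w_0)^{\kappa}$. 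This $g$ satisfies \eqref{eq:sieve-dimension} with $A=1$, yet $\sum_{\sqrt{w_0}\le p<w_0}g(p)\asymp w_0/\log w_0$ is unbounded. (The appendix's $g$ does satisfy the two-sided bound, but the lemma does not assume it.)

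\textbf{Two further slips.} First, the ratio is inverted: you need $V(z_j)/V(z)\le A(\log z/\log z_j)^{\kappa}$, whereas $V(z)V(z_j)^{-1}\le 1$ holds trivially and is useless. Second, your error description (tails of $\sum_{z_{j+1}\le p<z_j}g(p)$, with Bonferroni applied range by range) fits a product of per-range truncations, not the cumulative weights you actually defined. For that product construction the lower-bound version fails: a product of lower truncations is not a lower bound, and Brun's correction term produces weights with $|\lambda^-_d|>1$.

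\textbf{The fix: analyse your cumulative weights directly.} Let $\mathcal D$ be their support and $p_b$ the least prime factor of $b$. Call $b=p_1\cdots p_r$ (with $p_1>\dots>p_r$) a first failure if $b\notin\mathcal D$ but $b/p_r\in\mathcal D$. Then $r=r_j+1$ for some $j$ with $p_r\ge z_j$, so all first failures have the parity opposite to the $r_j$. The identity
\[
\sum_{d\mid m}\mu(d)\1_{\mathcal D}(d)=\1_{m=1}-\sum_{\substack{b\mid m\\ b\text{ first failure}}}\mu(b)\,\1_{(m/b,\,P(p_b))=1}
\]
gives the sandwich property, and it also gives
\[
\Bigl|\sum_d\lambda^\pm_dg(d)-V(z)\Bigr|=\sum_{b}g(b)V(p_b)\le V(z)\sum_{j\ge1}A\Bigl(\frac{\log z}{\log z_j}\Bigr)^{\kappa}\frac{G_j^{\,r_j+1}}{(r_j+1)!},\qquad G_j:=\sum_{z_j\le p<z}g(p).
\]
The failing primes range over all of $[z_j,z)$, which is exactly what the one-sided hypothesis controls.

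\textbf{Choosing the parameters.} With $\log z_j=\theta^j\log z$ you get $G_j\le\kappa j\log(1/\theta)+\log A$, which grows linearly in $j$ rather than staying bounded. So the $j$-slope of $r_j$ must be a large multiple of $\kappa\log(1/\theta)$, so that $\bigl(eG_j/(r_j+1)\bigr)^{r_j+1}$ beats $A\theta^{-\kappa j}$. The $s$-part of $r_j$ must be a small multiple of $s$, so that $\sum_j r_{j+1}\log z_j\le s\log z$. Together these force $s\ge s_0(\kappa)$.

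\textbf{Small $s$.} For $1\le s<s_0$ the lower bound is trivial, since the sifted sum is nonnegative. For the upper bound, sift only up to $z'=D^{1/s_0}$; this is admissible with $A$ replaced by $As_0^{\kappa}$. Then use $V(z')\le As_0^{\kappa}V(z)$. This scheme gives exponential decay $O_{\kappa,A}(e^{-s})$ rather than the $s^{-s}$ you anticipate, which is all the lemma needs.
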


With this at our disposal, we now prove Proposition \ref{ricebrunsum2}.
\begin{proof}[Proof of Proposition \ref{ricebrunsum2}]
Suppose first that $b\notin W_\ell^q(U)$. By the definition of
$W_\ell^q(U)$, there is a prime $p\le U$ such that
\[
p^{\gamma_\ell(p)}\mid q
\qquad\text{and}\qquad
h_\ell'(b)\equiv0\pmod{p^{\gamma_\ell(p)}}.
\]
Hence every $n\equiv b\pmod q$ satisfies
\[
h_\ell'(n)\equiv h_\ell'(b)\equiv0
\pmod{p^{\gamma_\ell(p)}},
\]
so no such $n$ lies in $W_\ell(U)$. The first claim follows. 
\medskip

We now fix $b\in W_\ell^q(U)$. 

For $v\le u\le v+H$, define
\[
A_{b,v}(u)
:=
\#\{v<n\le u:n\equiv b\pmod q,\ n\in W_\ell(U)\}.
\]
We first prove a uniform estimate
\begin{equation}\label{eq:brun-short-count}
A_{b,v}(u)=
\frac{u-v}{q}
\prod_{\substack{p\le U\\p^{\gamma_\ell(p)}\nmid q}}
\left(1-\frac{j_\ell(p)}{p^{\gamma_\ell(p)}}\right)
+
O_h\left(
\left(\frac Hq\right) e^{-c'\frac{\log (H/q)}{\log U}}
\right).
\end{equation}
The assertion is immediate when $u=v$, so suppose that $u>v$.

Write every integer congruent to $b$ modulo $q$ uniquely as $b+qm$. Let
\[
\mathcal I_{v,u}
:=
\{m\in\Z:v<b+qm\le u\},
\qquad
a_m:=\1_{\mathcal I_{v,u}}(m).
\]
The interval defining $\mathcal I_{v,u}$ has length $(u-v)/q$. This positive quantity will be the parameter denoted by $X$ in Lemma~\ref{fundamental-sieve-lemma}. The argument below therefore also applies when $\mathcal I_{v,u}$ is empty.

We now determine the sieving conditions. For every prime $p\le U$ with
$p^{\gamma_\ell(p)}\nmid q$, let $E_p$ be the set of integers $m$
satisfying
\[
h_\ell'(b+qm)\equiv0\pmod{p^{\gamma_\ell(p)}}.
\]
For all other primes $p$, put $E_p=\emptyset$. If
$p^{\gamma_\ell(p)}\mid q$, then $b\in W_\ell^q(U)$ gives
\[
h_\ell'(b+qm)\equiv h_\ell'(b)\not\equiv0
\pmod{p^{\gamma_\ell(p)}}
\]
for every $m$. It follows that
\[
b+qm\in W_\ell(U)
\qquad\Longleftrightarrow\qquad
m\notin\bigcup_{p<z}E_p,
\]
and hence for $z:=2U$
\[
A_{b,v}(u)
=
\sum_{m\notin\bigcup_{p<z}E_p}a_m.
\]

Define the multiplicative function $g$ appearing in Lemma
\ref{fundamental-sieve-lemma} by
\[
g(p):=
\begin{cases}
\dfrac{j_\ell(p)}{p^{\gamma_\ell(p)}},
&
p\le U
\text{ and }
p^{\gamma_\ell(p)}\nmid q,\\[6pt]
0,
&
\text{otherwise},
\end{cases}
\]
and extend $g$ multiplicatively. Note that $0\le g(p)<1$.

For squarefree $d$, put
\[
E_d:=\bigcap_{p\mid d}E_p.
\]
If $d$ has a prime factor for which $E_p=\emptyset$, then
$E_d=\emptyset$ and $g(d)=0$, so the counting formula required by the
fundamental lemma holds with remainder zero. Suppose now that $E_p$ is non empty for every prime factor $p|d$. In particular, $p\le U$ and
$p^{\gamma_\ell(p)}\nmid q$. Since $q$ is $(\ell,U)$-complete, this
implies $p\nmid q$ for every $p\mid d$. Consequently, for each
$p\mid d$, the map
\[
m\pmod{p^{\gamma_\ell(p)}}
\longmapsto
b+qm\pmod{p^{\gamma_\ell(p)}}
\]
is a bijection. By the definition of $j_\ell(p)$, the set $E_p$
therefore consists of exactly $j_\ell(p)$ residue classes modulo
$p^{\gamma_\ell(p)}$.

The moduli $p^{\gamma_\ell(p)}$ for $p\mid d$ are pairwise coprime, so
the Chinese Remainder Theorem shows that $E_d$ is a union of
\[
J_d:=\prod_{p\mid d}j_\ell(p)
\]
residue classes modulo
\[
R_d:=\prod_{p\mid d}p^{\gamma_\ell(p)}.
\]
Moreover, by the multiplicativity of $g$,
\[
g(d)
=
\prod_{p\mid d}\frac{j_\ell(p)}{p^{\gamma_\ell(p)}}
=
\frac{J_d}{R_d}.
\]
Thus $g(d)$ is precisely the proportion of residue classes modulo
$R_d$ which belong to $E_d$. Each of these $J_d$ residue classes
contains $\frac{u-v}{qR_d}+O(1)$ integers in $\mathcal I_{v,u}$.
Therefore
\begin{equation}\label{eq:local-count-d}
\sum_{m\in E_d}a_m
=
\frac{u-v}{q}\frac{J_d}{R_d}+r_d
=
\frac{u-v}{q}g(d)+r_d,
\qquad
|r_d|\le J_d.
\end{equation}
This verifies the counting formula required for the application of
the fundamental lemma.

We next bound $J_d$. If $p>k$ and $\gamma_\ell(p)=1$, then
$h_\ell'$ is a nonzero polynomial modulo $p$ of degree at most $k-1$,
so $j_\ell(p)\le k-1$. The primes $p\le k$ contribute only an
$O_h(1)$ factor. The primes for which $\gamma_\ell(p)>1$ also
contribute only an $O_h(1)$ factor. Indeed, \eqref{completersmall}
gives
\[
\prod_{\gamma_\ell(p)>1}p^{\gamma_\ell(p)-1}\ll_h1.
\]
Since $p\le p^{\gamma_\ell(p)-1}$ whenever
$\gamma_\ell(p)>1$, this also gives
\[
\prod_{\gamma_\ell(p)>1}p^{\gamma_\ell(p)}
\le
\left(
\prod_{\gamma_\ell(p)>1}p^{\gamma_\ell(p)-1}
\right)^2
\ll_h1.
\]
Because $j_\ell(p)<p^{\gamma_\ell(p)}$, it follows that $\displaystyle J_d\ll_h\prod_{\substack{\gamma_{\ell}(p)=1}\\p|d}j_{\ell}(p)\ll_{h}(k-1)^{\omega(d)}.$

We also verify the sieve-dimension condition. Recall that
\[
V(w):=\prod_{p<w}(1-g(p)).
\]
For $2\le w\le z=2U$, we have
\[
\frac{V(w)}{V(z)}
=
\prod_{w\le p<z}(1-g(p))^{-1}
\ll_h
\prod_{\substack{w\le p<z\\p>k}}
\left(1-\frac{k-1}{p}\right)^{-1}
\ll_k
\left(\frac{\log z}{\log w}\right)^{k-1}.
\]
The primes $p\le k$ and the primes satisfying
$\gamma_\ell(p)>1$ contribute to the implied constant. Thus the
dimension condition in Lemma~\ref{fundamental-sieve-lemma} holds with
$\kappa=k-1$ and $A=O_h(1)$.
Put
\[
D:=\left(\frac Hq\right)^{1/2},
\qquad
s:=\frac{\log D}{\log z}
=\frac{\log D}{\log(2U)}.
\]
By \eqref{eq:local-count-d}, the sum of the remainder terms in the
fundamental lemma satisfies
\begin{align*}
\sum_{d\le D}\mu(d)^2|r_d|
\ll_h
\sum_{d\le D}\mu(d)^2(k-1)^{\omega(d)}
\le
D\sum_{d\le D}
\frac{\mu(d)^2(k-1)^{\omega(d)}}d
&\le
D\prod_{p\le D}
\left(1+\frac{k-1}{p}\right)\\
&\ll_k
D(\log(2D))^{k-1}.
\end{align*}
After increasing $c$ and decreasing $c'$, this is
\begin{equation}\label{eq:short-sieve-remainder}
\ll_h
\frac Hq e^{-c'\frac{\log ( H/q)}{\log U}}.
\end{equation}

The relative error supplied by the fundamental lemma is also bounded
by \eqref{eq:short-sieve-remainder}. Indeed,
\[
e^{-s}
=
\exp\left(
-\frac{\log D}{\log(2U)}
\right)
\le
\exp\left(
-\frac{c_1\log(H/q)}{\log U}
\right)
\]
for some absolute constant $c_1>0$. Since $0<(u-v)/q\le H/q$ and $V(z)\le1,$
the relative error contributes at most $O_h\left(0
\frac Hq e^{-c'\frac{\log (H/q)}{\log U}}
\right)$
after decreasing $c'$ if necessary.

Lemma~\ref{fundamental-sieve-lemma} now gives
\[
A_{b,v}(u)
=
\frac{u-v}{q}V(z)
+
O_h\left(
\frac Hq e^{-c'\frac{\log(H/q)}{\log U}}
\right).
\]
Because $g(p)=0$ for $U<p<2U$, we have
\[
V(z)
=
\prod_{\substack{p\le U\\p^{\gamma_\ell(p)}\nmid q}}
\left(1-\frac{j_\ell(p)}{p^{\gamma_\ell(p)}}\right).
\]
This proves \eqref{eq:brun-short-count} uniformly for
$v\le u\le v+H$.

We now recover the weight $h_{\ell}'(n)$. Partial summation gives
\[
\sum_{\substack{v<n\le t\\
n\in W_\ell(U)\\
n\equiv b\bmod q}}
h_\ell'(n)
=
h_\ell'(t)A_{b,v}(t)
-
\int_v^t A_{b,v}(u)h_\ell''(u)\,du.
\]
Substituting \eqref{eq:brun-short-count}, the main term is
\[
\frac1q
\prod_{\substack{p\le U\\p^{\gamma_\ell(p)}\nmid q}}
\left(1-\frac{j_\ell(p)}{p^{\gamma_\ell(p)}}\right)\cdot
\left(
(t-v)h_\ell'(t)
-
\int_v^t(u-v)h_\ell''(u)\,du
\right).
\]
Integration by parts gives
\[
\int_v^t(u-v)h_\ell''(u)\,du
=
(t-v)h_\ell'(t)-h_\ell(t)+h_\ell(v).
\]
Thus the main term is exactly
\[
\frac{h_\ell(t)-h_\ell(v)}q
\prod_{\substack{p\le U\\p^{\gamma_\ell(p)}\nmid q}}
\left(1-\frac{j_\ell(p)}{p^{\gamma_\ell(p)}}\right).
\]

Since $h_\ell', h_{\ell}''>0$ on $[1,\infty)$, the error coming from \eqref{eq:brun-short-count} is uniform in $u$ and  is at most
\begin{align*}
   & \ll_h
\frac Hq
e^{-c'\frac{\log(H/q)}{\log U}}
\left(
h_\ell'(t)+\int_v^t h_\ell''(u)\,du
\right)\\
&\leq \frac Hq
e^{-c'\frac{\log(H/q)}{\log U}}(2h_\ell'(v+H)).
\end{align*}

This proves \eqref{eq:ricebrun-short-derivative}.

If $v+H\le Y$ and $h_\ell(Y)=X$, then, since $h_\ell''>0$ on $[1,\infty)$, \eqref{eq:hl' bound} gives $h_\ell'(v+H)\le h_\ell'(Y)\ll_h\frac XY$,
which yields the desired error term
\[
O_h\left(
\frac{XH}{qY}
e^{-c'\frac{\log(H/q)}{\log U}}
\right).
\]

\end{proof}
\end{appendix}

\section*{Acknowledgments}
The seventh and ninth authors were supported by the HUN-REN Alfr\'ed R\'enyi Institute of Mathematics (Erd\H{o}s Center), and they thank the R\'enyi Institute for its hospitality. The seventh author would like to thank Alex Rice for introducing him to the circle method and forbidden difference problems, and he would also like to thank Mehtaab Sawhney for pointing out to us the idea of applying Alex Rice's exponential sum estimates to improve an earlier version of Theorem \ref{main}.

\section*{AI disclosure}
During the preparation of the present version of this manuscript, after the first version had appeared on arXiv, the authors used an LLM in an effort to improve the admissible range of exponents in the main theorem. The LLM suggested ideas that the authors developed into the short-interval strategy used in the major-arc estimate and helped them identify the relevant precedent for this strategy in Vinogradov’s work; see \cite[Theorems~2a--2b]{Vinogradov54}. In addition, the authors had previously written an application of the random sparsification technique, but the LLM helped them identify the simpler form of the argument that now appears in the manuscript. The LLM was also used for LaTeX assistance, formatting, and copyediting. The authors wrote and verified the manuscript and take full responsibility for its contents.

\bibliographystyle{amsplain}
\bibliography{citations}
\end{document}